\newtheorem{theorem}{Theorem}[section]
\newtheorem{claim}[theorem]{Claim}
\newtheorem{lemma}[theorem]{Lemma}
\newtheorem{proposition}[theorem]{Proposition}
\newtheorem{corollary}[theorem]{Corollary}
\theoremstyle{definition}
\newtheorem{definition}[theorem]{Definition}
\newtheorem{example}[theorem]{Example}
\newtheorem{question}[theorem]{Question}
\theoremstyle{remark}
\newtheorem{remark}[theorem]{Remark}
\newtheorem{notation}[theorem]{Notation}
\def\l{{\langle}}
\def\r{{\rangle}}
\def\mathunderaccent#1#2 {\let\theaccent#1\skewfactor#2
\mathpalette\putaccentunder}
\def\putaccentunder#1#2{\oalign{$#1#2$\crcr\hidewidth
\vbox to.2ex{\hbox{$#1\skew\skewfactor\theaccent{}$}\vss}\hidewidth}}
\newcommand{\lusim}[1]{\smash{\underset{\raisebox{1.2pt}[0cm][0cm]{$\sim$}}
{{#1}}}}
\def\smallbox#1{\leavevmode\thinspace\hbox{\vrule\vtop{\vbox
   {\hrule\kern1pt\hbox{\vphantom{\tt/}\thinspace{\tt#1}\thinspace}}
   \kern1pt\hrule}\vrule}\thinspace}
\DeclareMathOperator{\dom}{dom}
\DeclareMathOperator{\crit}{crit}
\DeclareMathOperator{\rng}{rng}
\DeclareMathOperator{\hull}{Hull}
\title{Applications of the Magidor iteration to ultrafilter theory}
\author{Tom Benhamou}
\thanks{The first author's research  was supported by the National Science Foundation under Grant
No. DMS-2346680.}
\address[Benhamou]{Department of Mathematics, Rutgers University, New Brunswick, NJ USA}
\email{tom.benhamou@rutgers.edu}
\author{Gabriel Goldberg}
\thanks{The second author's research was supported by the National Science Foundation under Grant No. DMS-2401789.}
\address[Goldberg]{Departement of Mathematics, UC Berkeley, Berkeley, CA 94720-3840 USA }
\email{ggoldberg@berkeley.edu}
\subjclass[2010]{03E45, 03E65, 03E55}
\keywords{Magidor iteration, the Ultrapower Axiom, Inner models, countably complete ultrafilters, p-point ultrafilters}
\begin{document}
\begin{abstract}
    We characterize sums of normal ultrafilters after the Magidor iteration of Prikry forcings over a discrete set of measurable cardinals. We apply this to show that the weak Ultrapower Axiom is not equivalent to the Ultrapower Axiom. We also construct a non-rigid ultrapower and two uniform ultrafilters on different cardinals that have the same ultrapower. 
\end{abstract}
\maketitle
\section{Introduction}
Iterated Prikry forcing was first introduced by Magidor \cite{MAGIDOR197633} in his seminal \textit{``study on identity crises"} to produce a model of ZFC in which the least measurable cardinal is strongly compact.
The rough idea is to iteratively singularize each cardinal \(\alpha\) in some set of measurable cardinals \(\Delta\) using the Prikry forcing associated with a normal measure on \(\alpha\).

Ben-Neria \cite{Ben-Neria2014-BENFMI} was the first to notice that if Magidor's construction is carried out over the core model \(K\), then the normal ultrafilters of the forcing extension can be classified in terms of the normal ultrafilters of \(K\). 
Recently, Ben-Neria's work was substantially extended and generalized by Kaplan \cite{EyalMagidor}, who showed, most significantly, that the classification could be carried out even when the ground model is not the core model.

This paper is focused on the special case of Magidor's construction in which the set of measurable cardinals $\Delta$ to be singularized is \textit{discrete} in the sense that it does not contain any of its limit points. In this restricted setting, the forcing can be viewed as a product of Prikry forcings which we will refer to as the \textit{Magidor product}. 
For discrete Magidor products, Kaplan's theorem can be stated as follows:
\begin{theorem}[Kaplan]
    \label{cor: cor from Kaplan}
    Suppose \(G\) is \(V\)-generic for a Magidor product of Prikry forcings on a discrete set \(\Delta\subseteq \kappa\). Then every normal measure $U$ on $\kappa$ in $V$ generates a normal ultrafilter $U^*$ in \(V[G]\). Moreover $j_{U^*}\restriction V =i\circ j_U$, where $i$ is an iterated ultrapower of $V_U$.
\end{theorem}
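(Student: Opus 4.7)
The plan is to construct inside $V[G]$ an elementary embedding $j^*\colon V[G]\to N[G*H]$ whose restriction to $V$ has the form $i\circ j_U$ for some iterated ultrapower $i\colon V_U\to N$. The ultrafilter is then $U^*=\{A\in\calP(\kappa)^{V[G]}:\kappa\in j^*(A)\}$; normality of $U^*$ is automatic from $\mathrm{crit}(j^*)=\kappa$, and $j_{U^*}\restriction V=i\circ j_U$ holds by construction.

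First I factor the image forcing in $V_U$. Discreteness of $\Delta$ forces $\kappa\notin j_U(\Delta)$: if $\Delta\in U$ then $\Delta$ is unbounded in $\kappa$, making $\kappa$ a limit point of $j_U(\Delta)$ in $V_U$ and contradicting the elementary fact that $j_U(\Delta)$ is discrete. Hence $j_U(\mathbb{P}_\kappa)$ factors in $V_U$ as $\mathbb{P}_\kappa*\mathbb{Q}_{\mathrm{tail}}$, where $\mathbb{Q}_{\mathrm{tail}}$ is a Magidor iteration in $V_U[G]$ on the discrete set $j_U(\Delta)\cap(\kappa,j_U(\kappa))$.

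Next I manufacture the required generic by iterating $V_U$, since an honest $V_U[G]$-generic for $\mathbb{Q}_{\mathrm{tail}}$ need not lie in $V[G]$. Enumerate $j_U(\Delta)\cap(\kappa,j_U(\kappa))=\langle\kappa_\alpha:\alpha<\theta\rangle$ with associated normal measures $W_\alpha\in V_U$, and define $i\colon V_U\to N$ as the iterated ultrapower obtained by $\omega$-iterating the image of each $W_\alpha$ in order of $\alpha$. By the Bukovsky-Dehornoy/Mathias criterion, each $\omega$-sequence of critical points $\vec{s}_\alpha$ is Prikry generic over $N$ for $i(W_\alpha)$; discreteness of $\Delta$ decouples the stages of $i(\mathbb{Q}_{\mathrm{tail}})$ enough that the combined sequences assemble into a single $N[G]$-generic filter $H$ for $i(\mathbb{Q}_{\mathrm{tail}})$.

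The final step is standard lifting. Since $(i\circ j_U)(\mathbb{P}_\kappa)\cong\mathbb{P}_\kappa*i(\mathbb{Q}_{\mathrm{tail}})$ in $N$ and $(i\circ j_U)[G]\subseteq G*H$, the formula $j^*(\tau_G)=(i\circ j_U(\tau))_{G*H}$ defines an elementary embedding $j^*\colon V[G]\to N[G*H]$ inside $V[G]$. The main obstacle is the genericity verification of the previous step: one must show that the coordinatewise Prikry sequences produced by $i$ genuinely combine into a full $N[G]$-generic filter for the Magidor iteration $i(\mathbb{Q}_{\mathrm{tail}})$. This requires a Magidor-iteration adaptation of the Mathias criterion, and relies essentially on discreteness to ensure that the forcing at each stage is independent of the generic data chosen below.
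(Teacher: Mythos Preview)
Your construction is essentially the paper's \emph{canonical extension} (Definition~\ref{Definition: Canonical extension} and Proposition~\ref{Prop: embedding of W*}): iterate the tail measures out to get $i:V_U\to N$, use the critical sequences as a generic $H$, and lift. That part is fine and matches the paper closely.

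There are, however, two genuine gaps. First, you never prove that $U$ \emph{generates} $U^*$. You derive $U^*$ from $j^*$, so $U\subseteq U^*$; but the assertion ``$U$ generates a normal ultrafilter'' means that every $A\in U^*$ contains some $B\in U$, equivalently that the upward closure of $U$ in $P(\kappa)^{V[G]}$ is already an ultrafilter. This is the substantive content of the theorem and does not follow from your construction: the canonical extension exists for \emph{any} $\kappa$-complete $W$ on $\kappa$ (Section~\ref{Section: Canonical extension}), yet for non-normal $W$ it is typically \emph{not} generated by $W$ --- indeed Section~\ref{Section: Some ultrafilters} exhibits $W$ with infinitely many distinct extensions. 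The paper handles generation by a completely different, forcing-theoretic argument (Theorem~\ref{Thm: Delta-mild generates an ultraiflter}): using the Prikry property together with normality (via diagonal intersections), one shows that for every $\mathbb{P}_\kappa$-name $\dot A$ for a subset of $\kappa$ there is $p\in G$ with $j_U(p)$ deciding $\kappa\in j_U(\dot A)$; this is exactly what is needed and has no analog in your outline.

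Second, your claim that ``$j_{U^*}\restriction V=i\circ j_U$ holds by construction'' is too quick. Deriving $U^*$ from $j^*$ only gives a factor map $k:V[G]_{U^*}\to N[G*H]$ with $k\circ j_{U^*}=j^*$; to get $j_{U^*}=j^*$ you must show $k$ is surjective, i.e.\ that $N[G*H]=\hull^{N[G*H]}(j^*[V[G]]\cup\{\kappa\})$. The paper does this via Lemma~\ref{Lemma: Generic}(3), which shows every element of $N$ is definable in $N[H]$ from parameters in $i[V_U]\cup\{H\}$; this in turn uses discreteness of $j_U(\Delta)$ in a nontrivial way. Without this hull argument you cannot identify $j_{U^*}\restriction V$ with $i\circ j_U$.
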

Here and below, \[j_U :V \to V_U\] denotes the ultrapower of the universe of sets by $U$. 

This paper addresses the question of
whether Theorem \ref{cor: cor from Kaplan}
can be generalized to ultrafilters \(U\) that are not normal.
Although we do prove a slight generalization of Kaplan's result to a collection of measures we call \textit{mild} (see \ref{Cor: Kaplan's Theorem}),
the more surprising contribution of this paper is a negative answer to the general question: we show in Section \ref{Section: Some ultrafilters} that in the context of Theorem \ref{cor: cor from Kaplan}, a $\kappa$-complete ultrafilter \(U\) on \(\kappa\)
can extend in unexpected ways after a Magidor product of Prikry forcings.
In particular, the filter generated by \(U\) in the forcing extension is not an ultrafilter. Moreover, these unexpected extensions answer several natural questions in the theory of ultrafilters.

For example, recall that Kunen's inconsistency theorem
states that there is no nontrivial elementary embedding from the universe of sets to itself.
Suppose \(U\) is a countably complete
ultrafilter, and let \(V_U\) be the ultrapower of the universe of sets by \(U\). Can there be a nontrivial elementary embedding from \(V_U\) to itself? 
By \cite[Theorem 3.3, Theorem 4.29]{GoldbergUniqueness}, the answer is no if \(V = \text{HOD}\) or
if \(U\) is \(\kappa^+\)-complete where \(\kappa\) is extendible. Nevertheless, we answer the question positively assuming the consistency of a measurable limit of measurable cardinals:

\begin{theorem}
    It is consistent with \textnormal{ZFC} that for some normal ultrafilter \(U\), there is an elementary embedding from \(V_U\) to itself.
\end{theorem}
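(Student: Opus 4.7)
The plan is to apply Theorem \ref{cor: cor from Kaplan} in a ground model where $\kappa$ is a measurable limit of measurable cardinals, and then exploit the fact that the iterated ultrapower $i$ appearing in Kaplan's theorem can be replicated inside the ultrapower model itself, producing the desired self-embedding. Start in $V$ with a normal ultrafilter $U$ on such a $\kappa$, pick a discrete set $\Delta$ of measurable cardinals cofinal in $\kappa$, and force with the Magidor iteration $\mathbb{P}$ of Prikry forcings on $\Delta$ to obtain $V[G]$. Theorem \ref{cor: cor from Kaplan} then yields a normal ultrafilter $U^{*}$ in $V[G]$ extending $U$, with $j_{U^{*}}\restriction V = i\circ j_U$ for some iterated ultrapower $i\colon V_U\to N$, and $V[G]_{U^{*}}$ is expressible as $N[\vec{c}]$, where $\vec{c}$ encodes the Prikry generics at the stages in $j_{U^{*}}(\Delta)\cap[\kappa,j_{U^{*}}(\kappa))$.

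The key structural observation is that, because $\kappa$ is a measurable limit of measurables in $V$, the interval $(\kappa,j_U(\kappa))$ inside $V_U$ contains cofinally many cardinals that are measurable in $V_U$ (arising as $[f]_U$ for functions $f$ on $\kappa$ taking measurable values above the input); hence the iterated ultrapower $i$ is genuinely nontrivial, with critical point strictly above $\kappa$. Moreover, this configuration is self-reproducing: the internal copy in $V[G]_{U^{*}}$ of the setup at $j_{U^{*}}(\kappa)$ (consisting of $j_{U^{*}}(V)$, $j_{U^{*}}(U)$, $j_{U^{*}}(\Delta)$, $j_{U^{*}}(\mathbb{P})$, and $j_{U^{*}}(G)$) satisfies by elementarity exactly the hypotheses under which Theorem \ref{cor: cor from Kaplan} was originally applied, so the construction producing $i$ can be rerun internally.

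To produce the self-embedding $k\colon V[G]_{U^{*}}\to V[G]_{U^{*}}$, I would run Kaplan's construction inside $V[G]_{U^{*}}$ using this internal copy of the setup, yielding a candidate iterated ultrapower $k$ with critical point above $\kappa$ that is nontrivial by the same argument as for $i$. The main obstacle, and where the refined characterization of sums of normal measures developed elsewhere in this paper becomes essential, is verifying that the target of $k$ is again $V[G]_{U^{*}}$ rather than a proper outer model. Concretely, one must match the Prikry/Magidor data produced internally by $k$ with the data already encoded in $\vec{c}$, or equivalently confirm that the normal measure derived from $k\circ j_{U^{*}}$ via Kaplan's theorem is $U^{*}$ itself. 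Once this closure property is established, $k$ is the desired nontrivial elementary self-embedding of $V[G]_{U^{*}}$, giving the stated consistency result.
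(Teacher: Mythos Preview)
Your setup is correct: force with the discrete Magidor iteration below a measurable limit of measurables $\kappa$, and use Kaplan's theorem to get a normal ultrafilter $D^*$ in $V[G]$ with $j_{D^*}\restriction V = i^D\circ j_D$ and $V[G]_{D^*} = N[G\times G_{\vec s}]$. But the plan for producing the self-embedding has a genuine gap. ``Running Kaplan's construction inside $V[G]_{D^*}$'' does not yield anything useful: applied to $j_{D^*}(V)$, $j_{D^*}(D)$, $j_{D^*}(\vec U)$ it simply reproduces $j_{D^*}(D^*)$ and the embedding $j_{j_{D^*}(D^*)}$, whose target is not $V[G]_{D^*}$. The complete iteration $i^D$ is an embedding of $V_D$, not of $V[G]_{D^*}$, so ``rerunning'' it internally does not give a candidate $k$ with the right domain either. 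You never actually name an embedding $k\colon V[G]_{D^*}\to V[G]_{D^*}$, and the classification of sums of normals you appeal to is not what closes the gap.

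The paper's argument is far more direct and avoids the classification entirely. Let $\delta = \min(j_D(\Delta)\setminus\kappa)$ and let $U = j_D(\vec U)_\delta$, the \emph{first} measure applied in the complete iteration $i^D$. This $U$ is a $\delta$-complete $N$-ultrafilter on $\delta$, and it generates a $V[G]_{D^*}$-ultrafilter $U^*$ (by L\'evy--Solovay, since $N[G\times G_{\vec s}]$ and $N$ agree on $P(\delta)$). Now Proposition~\ref{Proposition: indiscernible after ultrapower} gives exactly what is needed: $j_U^{N[G_{\vec s}]}\circ i^D = i^D$ and $j_U^{N[G_{\vec s}]}(\vec s\,) = \vec s\setminus\{\delta\}$. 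Hence $j_{U^*}$ maps $N[G\times G_{\vec s}]$ to $N[G\times G_{\vec s\setminus\{\delta\}}]$, and these two models are equal because the generic filters differ by a single ordinal. So $j_{U^*}\colon V[G]_{D^*}\to V[G]_{D^*}$ is the nontrivial self-embedding. The whole point is that one measure from the complete iteration already does the job; no internal rerun and no classification are needed.
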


We then consider the question of ultrafilters
with the same ultrapower. 
Suppose \(U\) and \(W\)
are countably complete ultrafilters 
such that \(V_U = V_W\).
Must \(U\) and \(W\) be isomorphic (that is, Rudin--Keisler equivalent)? Woodin observed that the answer is no \cite[Theorem 3.1]{GoldbergUniqueness}, though again this is true if \(V = \text{HOD}\). We consider the weaker question: must there exist \(X\in U\) and \(Y\in W\) with
\(|X| = |Y|\)? Again, we show the answer is no after a discrete Magidor product of Prikry forcings, assuming the consistency of a limit of measurable cardinals of measurable cofinality:

\begin{theorem}
    It is consistent with \textnormal{ZFC} that there are countably complete uniform ultrafilters \(U\) and \(W\) on distinct cardinals such that \(V_U = V_W\).
\end{theorem}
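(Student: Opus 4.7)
The plan is to work in a ground model $V$ containing a cardinal $\lambda$ that is a limit of measurable cardinals and whose cofinality $\mu:=\operatorname{cf}^V(\lambda)$ is measurable. Fix a normal measure $U$ on $\mu$, a discrete cofinal sequence $\langle \lambda_i : i<\mu\rangle$ of measurables in $\lambda$ with $\mu<\lambda_0$, and for each $i$ a normal measure $\mathcal{U}_i$ on $\lambda_i$; set $\Delta:=\{\lambda_i : i<\mu\}$. Force with the Magidor iteration of Prikry forcings over $\Delta$ with respect to the $\mathcal{U}_i$, obtaining $V[G]$. Since the iteration lives strictly above $\mu$, no bounded subsets of $\mu$ are added, and $U$ extends to a uniform normal ultrafilter $U^*$ on $\mu$ in $V[G]$. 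In $V$, define $W:=\sum_U \mathcal{U}_i$, the $U$-sum of the $\mathcal{U}_i$, which is a uniform countably complete ultrafilter on $\lambda$. Being a sum of normals along a normal, $W$ is \emph{mild} in the sense of the paper, so by the generalization of Kaplan's theorem (Corollary~\ref{Cor: Kaplan's Theorem}), $W$ generates a uniform countably complete ultrafilter $W^*$ on $\lambda$ in $V[G]$. Since $\mu\ne\lambda$ and both remain cardinals in $V[G]$, $U^*$ and $W^*$ live on distinct cardinals.

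The main claim is $V[G]_{U^*}=V[G]_{W^*}$. The strategy is to realize both as derived ultrafilters from a common elementary embedding $j:V[G]\to M$, with one seed $a\in[j[\mu],j(\mu))$ for $U^*$ and another seed $b\in[\sup j[\lambda],j(\lambda))$ for $W^*$. In $V$ we already have $V_W=(V_U)^E$ where $E:=[i\mapsto\mathcal{U}_i]_U$ is a normal measure in $V_U$ on the ordinal $\lambda^\dagger:=[i\mapsto\lambda_i]_U$, and $j_W=j_E\circ j_U$; however $V_U\ne V_W$ in $V$, since the seed $[\mathrm{id}]_U$ generates only $V_U$ inside $V_W$. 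The role of the Magidor iteration is to ``absorb'' the step $j_E$ into the Kaplan iteration that computes $j_{U^*}\restriction V$. Concretely, I would show that the iteration $i_U$ furnished by Theorem~\ref{cor: cor from Kaplan} for $U^*$ is exactly $j_E$: the generic Prikry sequences for the $\lambda_i$, averaged against $U$, are precisely what the measure $E$ encodes, and this single averaging step already suffices. On the other side, the generalized Kaplan theorem applied to $W$ gives $j_{W^*}\restriction V = j_W$, with the iteration being trivial because the averaging structure is already built into the sum. Combining these yields $j_{U^*}\restriction V = j_E\circ j_U = j_W = j_{W^*}\restriction V$.

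To finish, one must promote this equality on $V$ to an equality of embeddings on all of $V[G]$. Since $V[G]$ is generated over $V$ by the Prikry sequences $\langle \vec{C}_i : i<\mu\rangle$, and both $j_{U^*}$ and $j_{W^*}$ act on these via the same iteration data, both images agree in the target model, so $j_{U^*}=j_{W^*}$ and consequently $V[G]_{U^*}=V[G]_{W^*}$. Uniformity of $W^*$ on $\lambda$ then follows from uniformity of $W$ on $\lambda$ in $V$ together with cardinal preservation. The main obstacle is the identification $i_U=j_E$: justifying that the abstract Kaplan iteration of $V_U$ collapses in this setting to the single ultrapower by $E$. This is where the fact that $W$ is the $U$-\emph{diagonal} sum of the very measures $\mathcal{U}_i$ driving the Magidor iteration is used in an essential way, matching the generic combinatorics to the internal measure $E$ in $V_U$.
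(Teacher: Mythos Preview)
Your setup and the identification $j_W = j_E\circ j_U$ are correct, but the final step contains a fatal error, and two intermediate claims are wrong.

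The fatal error is the conclusion $j_{U^*} = j_{W^*}$ on all of $V[G]$. This is impossible: if the ultrapower embeddings coincide then $U^*$ and $W^*$ are Rudin--Keisler equivalent, and a uniform ultrafilter on $\lambda$ cannot be RK-below an ultrafilter on $\mu<\lambda$ (it would then concentrate on a set of size $\mu$). The theorem asks only that the \emph{target models} coincide; the embeddings must differ. In the paper's argument the two embeddings do agree on $V$, but $j_{U^*}(G)\neq j_{W^*}(G)$: the image generics differ by exactly one Prikry point, namely the first indiscernible $\lambda^\dagger$. Your assertion that ``both images agree in the target model'' is therefore false, and this is not a technicality --- the discrepancy of the image generics is precisely what makes it possible for $W^*$ to be uniform on $\lambda$ while sharing its ultrapower with $U^*$.

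Two subsidiary errors feed into this. First, $W$ is not $\Delta$-mild: that notion is defined only for $\lambda$-complete ultrafilters on $\lambda$, whereas your $W$ is merely $\mu$-complete; indeed Section~\ref{subSection: two normals} shows that such sums of normals have infinitely many countably complete extensions, so $W$ does not generate an ultrafilter in $V[G]$ and one must explicitly select the canonical extension $W^*$. Second, the Kaplan iteration $i_U$ is not the single ultrapower $j_E$: it is the full complete iteration of $V_U$ by $j_U(\vec{\mathcal U})\restriction(\lambda,j_U(\lambda))$, which applies $E$ and each of its images $\omega$ times and then does the same for every later measurable in $j_U(\Delta)\cap(\lambda,j_U(\lambda))$. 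Likewise the iteration attached to the canonical $W^*$ is not trivial but is the \emph{tail} of this same complete iteration after the first application of $E$. It is exactly this tail relationship that yields $j_{U^*}\restriction V = j_{W^*}\restriction V$ and, crucially, that the two targets are the same model $N$ equipped with generics $G_{\vec s}$ and $G_{\vec t}$ differing by a single ordinal, so that $N[G\times G_{\vec s}] = N[G\times G_{\vec t}]$ even though the embeddings disagree on $G$.
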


Recall that the Ultrapower Axiom (UA) \cite{GoldbergUA} states that for any two countably complete ultrafilters $U,W$, there are $W'\in V_U$ and $U'\in V_{W}$ such that \(W'\) is a countably complete ultrafilter in \(V_U\),
\(U'\) is a countably complete ultrafilter in \(V_W\), $(V_U)_{W'}=(V_W)_{U'}$ and $j_{W'}\circ j_U=j_{U'}\circ j_W$. The Weak Ultrapower Axiom (Weak UA) is the same statement, omitting the requirement that $j_{W'}\circ j_U=j_{U'}\circ j_W$.

The conclusions of the previous theorems are incompatible with UA by \cite[Theorem 5.2]{GoldbergUniqueness}. Therefore UA typically becomes false after performing a discrete Magidor product of Prikry forcings.
Nevertheless, starting from a model of UA containing no measurable cardinals \(\delta\) of Mitchell order \(2^{2^\delta}\), our main technical result (Theorem \ref{Thm: Classification}) yields a 
classification of the lifts of \(\kappa\)-complete ultrafilters on \(\kappa\) to the forcing extension.

This classification suffices to show in certain cases that Weak UA holds in the forcing extension.
This answers a question of the second author \cite[Question 9.2.4]{GoldbergUA}, assuming there is a measurable limit of measurable cardinals:
\begin{theorem}
    The Weak Ultrapower Axiom does not imply the Ultrapower Axiom. 
\end{theorem}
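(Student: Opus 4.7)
The plan is to start with a ground model $V$ satisfying UA together with the stated hypotheses — a measurable limit of measurable cardinals and no measurable cardinal $\delta$ of Mitchell order $2^{2^\delta}$ — and to perform a discrete Magidor iteration of Prikry forcings on a suitable discrete set $\Delta$. In the generic extension $V[G]$, the applications announced above (non-rigid ultrapower, or distinct uniform ultrafilters with the same ultrapower) force the failure of UA. The real task is then to show that Weak UA nevertheless survives in $V[G]$.

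The heart of the argument will invoke the classification Theorem \ref{Thm: Classification}: every $\kappa$-complete ultrafilter on $\kappa$ in $V[G]$ arises as a ``lift'' of a ground-model ultrafilter, parametrized by Prikry data along $\Delta$. So given any two countably complete ultrafilters $U^*, W^* \in V[G]$, they are traceable to $U, W \in V$ together with such data. I then plan to invoke UA in $V$ to obtain $W' \in V_U$ and $U' \in V_W$ with $(V_U)_{W'} = (V_W)_{U'}$ and $j_{W'} \circ j_U = j_{U'} \circ j_W$, and to lift $W', U'$ via the same classification to obtain candidate witnesses $W'^* \in V[G]_{U^*}$ and $U'^* \in V[G]_{W^*}$ for Weak UA applied to $U^*, W^*$.

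To verify that $(V[G]_{U^*})_{W'^*}$ and $(V[G]_{W^*})_{U'^*}$ coincide, the plan is to combine the ground-model identity $(V_U)_{W'} = (V_W)_{U'}$ with the Kaplan-style description (Theorem \ref{cor: cor from Kaplan} and its mild generalization) of how these ultrapowers inherit generic extensions: on both sides the resulting model is determined by the same ground-model target together with the same residual Prikry information, so the equality transports to $V[G]$. The failure of the strict commutativity $j_{W'^*} \circ j_{U^*} = j_{U'^*} \circ j_{W^*}$ — which is exactly the obstruction that keeps UA from holding — is traced to the two sides producing incompatible generic enumerations of the Prikry sequences on critical points shared by the two embeddings; this is consistent with Weak UA, which asks only for the target model equality.

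The main obstacle will be handling the ``unexpected extensions'' of Section \ref{Section: Some ultrafilters}: countably complete ultrafilters in $V[G]$ that are not simply generated by ground-model ultrafilters. For these, Theorem \ref{Thm: Classification} must be applied to present them also as lifts in a form amenable to ground-model UA witnesses, and one must check that the assembled $W'^*, U'^*$ actually land in the correct iterated ultrapowers of $V[G]$ (not merely of $V$) and are countably complete there. Verifying this uniformly across the entire classification is the crux; once it is in place, Weak UA in $V[G]$ follows, while UA has already been broken by the earlier theorems.
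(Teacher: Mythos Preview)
Your setup and the failure of UA in $V[G]$ are correct and match the paper. The divergence is in how Weak UA is established in $V[G]$, and your route has a real gap.

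You plan to lift the ground-model UA witnesses $W'\in V_U$, $U'\in V_W$ to $V[G]_{U^*}$ and $V[G]_{W^*}$ using the classification. But Theorem~\ref{Thm: Classification} classifies lifts of $V$-ultrafilters to $V[G]$; it does not tell you how to lift an ultrafilter from $V_U$ to $V[G]_{U^*}$. The model $V[G]_{U^*}$ is not a Magidor extension of $V_U$: by the analysis preceding the classification, $j_{U^*}\restriction V = i\circ e\circ j_U$ where $e$ is a finite \emph{external} iteration of $V_U$ and $i$ a complete iteration, so $V[G]_{U^*}$ is a generic extension of an iterate $N$ of $V_U$, not of $V_U$ itself. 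Pushing $W'$ forward along $e$ and $i$ and then lifting would be needed, and matching the two sides up so that the targets literally coincide (rather than merely being isomorphic iterates) is exactly the kind of bookkeeping that fails to give commuting embeddings and is not obviously salvageable to give equal target models either. Your paragraph on ``same residual Prikry information'' is where this would have to be checked, and it does not go through as stated.

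The paper avoids all of this. It proves Weak UA in $V[G]$ abstractly via Lemma~\ref{Lemma: Sufficient condition for Weak UA}: it suffices that (a) the Mitchell order on normal ultrafilters is linear in $V[G]$, and (b) every $\sigma$-complete ultrapower of $V[G]$ equals the ultrapower by a finite internal iteration of normals. For (a), linearity in $V$ together with Kaplan's theorem (every normal in $V[G]$ is generated by one in $V$) gives linearity in $V[G]$. For (b), choosing $\kappa$ to be the \emph{least} measurable limit of measurables makes $\kappa$ the least measurable in $V[G]$, so every $\sigma$-complete ultrafilter is $\kappa$-complete; its restriction to $V$ is, by UA and Theorem~\ref{Them: sums of numral under UA}, a sum of normals, and then Theorem~\ref{Thm: Classification} together with Lemma~\ref{Lemma: generic sums} (the case $d=d'$, automatic here since each measurable below $\kappa$ carries a unique normal measure) shows its ultrapower equals that of a finite internal iteration of normals in $V[G]$. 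The comparison of two such iterations then comes for free from linearity of the Mitchell order, with no need to lift ground-model witnesses.
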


The above results follow from our main technical theorem which is an analysis of all possible extensions of $\kappa$-complete ultrafilters over $\kappa$ whose ultrapower can be factored into a finite iterated ultrapower by normal ultrafilters.
The following consequence gives a sense of what this classification entails:
\begin{theorem}
    Suppose that $W\in V$ is a $\kappa$-complete ultrafilter over $\kappa$ that can be factored into a finite iteration of normal ultrafilters. Let $G$ be $V$-generic for a discrete Magidor product of Prikry forcings. Then in $V[G]$, $W$ extends to at most countably many countably complete ultrafilters.
    
    Moreover, if $W^*$ is an extension of $W$, then $j_{W^*}\restriction V=i\circ e\circ j_W$, where $e:V_W\rightarrow N$ is a finite external iteration of $V_W$ by normal measures and $i$ is an internal iterated ultrapower of $N$ by normal measures.
\end{theorem}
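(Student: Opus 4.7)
The plan is to proceed by induction on the minimal length $n$ of a factoring of $W$ into normal ultrafilters, writing $j_W = k_n \circ \cdots \circ k_1$ where each $k_i$ is an ultrapower embedding by a normal measure in the appropriate intermediate inner model. The base case $n=1$ is precisely Kaplan's theorem together with the mild extension to mild measures (\ref{Cor: Kaplan's Theorem}): each normal ultrafilter admits extensions whose embedding factors as a finite external iteration (possibly trivial) composed with a complete iteration by the images of the Magidor sequence $\vec U$, and the set of such extensions is described explicitly.

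For the inductive step, given an extension $W^*$ of $W$ in $V[G]$, I would first project $W^*$ onto its first-factor coordinate to obtain an extension $W_1^*$ of the normal measure $W_1$. The base case identifies $W_1^*$ up to a finite external iteration by images of normal measures of $V$. Passing into the ultrapower by $W_1^*$, the residue of $W^*$ modulo $W_1^*$ is an extension of the shifted ultrafilter $k_1(W_2 \otimes \cdots \otimes W_n)$, which has factoring length $n-1$, living inside a model that is itself a $V[G]$-generic extension of an iterate of $V_{W_1}$. Applying the inductive hypothesis inside this intermediate model and composing with the first-level decomposition yields
\[
j_{W^*}\restriction V \;=\; i \circ e \circ j_W,
\]
where $e$ is the concatenation of the finite external iterations produced at each of the $n$ levels (all by normal measures) and $i$ is the complete iteration of the resulting model $N$ by $e(j_W(\vec U))$, as required.

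The at-most-countably-many bound follows because at each of the $n$ stages the extension is determined by a finite iteration by normal measures drawn from a fixed countable pool, namely the images of the Magidor-relevant measures living below the appropriate critical point. A finite tuple from a countable set gives countably many possibilities, and $n$ is fixed, so the total number of extensions is bounded by a countable union of countable sets, hence countable. The complete iteration $i$ contributes no additional choices since it is uniquely determined by $N$ together with $e \circ j_W(\vec U)$ and the generic object $G$.

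The main obstacle will be justifying the inductive step cleanly: one must show that the residue of $W^*$ over its first-factor projection is genuinely an extension of the length-$(n-1)$ iterate inside the intermediate model (so that the inductive hypothesis applies), that the finite external iterations assembled across all $n$ levels compose into a single finite external iteration by normal measures of $V_W$ rather than by more general ultrafilters, and that the complete iteration $i$ produced at the bottom commutes appropriately with the external iterations produced at higher levels. This bookkeeping, together with ruling out ``exotic'' extensions that do not arise by projection to the first factor, is the technical core of the argument and is presumably where the classification in Theorem \ref{Thm: Classification} does the real work.
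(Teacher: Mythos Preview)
Your inductive strategy---project an extension of $\Sigma(D_0,\dots,D_{n-1})$ to a shorter iteration, invoke the inductive hypothesis, then analyze the residue---is exactly the structure of the paper's proof of Theorem~\ref{Thm: Classification}. Two points of divergence are worth noting. First, the paper peels off the \emph{last} factor (projecting to the first $m$ coordinates) rather than the first; this makes the residue a single normal ultrafilter $D_m$ in an ultrapower rather than a length-$(n-1)$ compound, and avoids having to re-verify that the intermediate model is itself a Magidor-iteration extension to which the full inductive hypothesis applies. Second, your base case is slightly misstated: for a single normal measure $D_0$ on $\kappa\notin\Delta$, Kaplan's theorem (Corollary~\ref{Cor: Kaplan's Theorem}) gives a \emph{unique} extension, and the external iteration $e$ is trivial there. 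The nontrivial external steps, and the countable multiplicity, arise only from those $D_m$ with $\delta_m\in j_{0,m}(\Delta)$.

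The genuine gap is the one you flag in your final paragraph: you have not identified the mechanism that rules out ``exotic'' extensions of the residue. The paper's engine for this is Lemma~\ref{lemma: tilde U}: if $i:M\to N$ is the complete iteration and $\tilde U$ is a fine $N$-ultrafilter extending $\{i(A)\cap\eta:A\in U\}$ for a normal $U$ on $\delta=\min(\dom\vec U)$, then either $\tilde U$ is principal on some iteration point $i_{0n}(\delta)$ or $\tilde U=i_{0n}(U)$ for some $n\le\omega$ (and $n=\omega$ is excluded by countable completeness in $V[G]$). This is what forces the residue at each stage to be one of the countably many ultrafilters $\tilde D_m$ constructed in Section~5.1, parametrized by $u(m)\in\{0,1\}$ and $x(m)\in\omega$. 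Your phrase ``finite iteration by normal measures drawn from a fixed countable pool'' is not quite the right picture: the countability comes from the pair $(u,x)\in 2^{d'}\times\omega^d$ indexing which iterate of the single relevant measure (or which indiscernible) is chosen at each $m\in d$, not from a pre-given countable set of measures. Without Lemma~\ref{lemma: tilde U} (and its supporting Lemmas~\ref{lemma: normal generation}--\ref{lemma: skies}), there is no argument that the residue $\bar U\cap N_m$ must land in this countable family rather than being some arbitrary $N_m$-ultrafilter.
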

The previous theorem also gives a sense in which our classification generalizes Kaplan's result.

This paper is organized as follows:
\begin{itemize}
    \item In Section~\S\ref{Section: prel} we prove some preliminary results regarding ultrapowers and the discrete Magidor product.
    \item In Section~\S\ref{Section: Complete iteration} we study the complete iteration by a sequence of normal measures and how it relates to the restriction of an ultrapower of the generic extension to the ground model.
    \item In Section~\S\ref{Section: Some ultrafilters} we describe an ultrafilter which has many extensions to the generic extension of the discrete Magidor product and characterize its extensions. Also, we provide the example of a non-rigid ultrapower.
    \item In Section~\S\ref{Section: Classification} we prove our characterization of lifts of ultrafilters to the generic extension by the discrete Magidor product.
    \item In Section~\S\ref{Section:Applications} we prove some applications of our characterization: a model of weak UA which fails to satisfy UA, and an example of the same ultrapower by ultrafilters on different cardinals.
    \item In Section~\S\ref{Section: Problems} we present some related open problems. 
\end{itemize}
\section{Preliminaries}\label{Section: prel}
\subsection{Derived ultrafilters and commuting squares}
Let $P$ be a transitive model of set theory and let $U$ be a (possibly external)  $P$-ultrafilter over a set $X\in P$.\footnote{That is, $U$ is an ultrafilter on the Boolean algebra $P(X)\cap P$.} We will always assume that $U$ is countably complete; namely, for any countable  $\mathcal{A}\subseteq U$, $\bigcap\mathcal{A}\neq\emptyset$. Denote by $j^P_U:P\to P_U$ the ultrapower of $P$ by $U$ using functions $f:X\to P$ in $P$. We will suppress the superscript $P$ from $j^P_U$ whenever there is no ambiguity. Since $U$ is countably complete, $P_U$ is well-founded and we identify $P_U$ with its transitive collapse.

Let $j:P\to Q$ be an elementary embedding of transitive models of set theory, $X\in P$, and $a\in j(X)$. The \textit{$P$-ultrafilter on $X$ derived from $j$ and $a$} is the set
$$U=\{A\in P(X)\cap P\mid a\in j(A)\}$$
(The underlying set $X$ is typically suppressed.)
It is well known that $U$ is a $P$-ultrafilter and that the map $k:P_U\to Q$ defined by $k([f]_U)=j(f)(a)$ is the unique elementary embedding mapping $[id]_U$ to $a$ such that $k\circ j_U=j$. The following generalization of this fact will be used implicitly in many calculations below:

\begin{lemma}[Shift lemma]\label{lemma: the shift lemma}
Suppose $i:P\rightarrow Q$ is an elementary embedding, $W$ is a $Q$-ultrafilter over a set in $\rng(i)$ and $U=i^{-1}[W]$. Then the embedding $k:P_U\rightarrow Q_W$ defined by $k([f]_U)=[i(f)]_W$ is  well-defined and elementary. Moreover, it is the unique embedding mapping $[id]_U$ to $[id]_W$ such that the following diagram commutes:
\[\begin{tikzcd}
	Q & {Q_W} \\
	P & {P_U}
	\arrow["{j_W}", from=1-1, to=1-2]
	\arrow["i", from=2-1, to=1-1]
	\arrow["{j_{U}}"', from=2-1, to=2-2]
	\arrow["k"', from=2-2, to=1-2]
\end{tikzcd}\]
\end{lemma}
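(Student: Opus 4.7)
The plan is to reduce everything---well-definedness, elementarity, commutativity, and uniqueness---to a single \L{}o\'s-style translation between $P$ and $Q$. Concretely, for every formula $\varphi(v_1,\ldots,v_n)$ and any $f_1,\ldots,f_n\in P$ with common domain $X$, I would aim to prove
\[
P_U\models\varphi([f_1]_U,\ldots,[f_n]_U) \ \Longleftrightarrow\ Q_W\models\varphi([i(f_1)]_W,\ldots,[i(f_n)]_W).
\]
Taking $\varphi$ to be equality then yields well-definedness and injectivity of $k$, and taking $\varphi$ arbitrary yields full elementarity.

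The equivalence itself is a three-step translation. First, apply \L{}o\'s' theorem in $P$: the left-hand side is equivalent to $A := \{x\in X : P\models\varphi(f_1(x),\ldots,f_n(x))\} \in U$. Second, by the definition $U=i^{-1}[W]$, this is equivalent to $i(A) \in W$. Third, by elementarity of $i$, the set $i(A)$ is precisely $\{y\in i(X) : Q\models\varphi(i(f_1)(y),\ldots,i(f_n)(y))\}$, and \L{}o\'s in $Q$ identifies membership of this set in $W$ with the right-hand side. With $k$ in hand, the commutativity of the diagram is read off from constant functions: $k(j_U(x)) = k([c_x]_U) = [i(c_x)]_W = [c_{i(x)}]_W = j_W(i(x))$, and $k([\mathrm{id}_X]_U) = [i(\mathrm{id}_X)]_W = [\mathrm{id}_{i(X)}]_W = [\mathrm{id}]_W$ by elementarity of $i$.

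For uniqueness, I would invoke the standard fact that every element of $P_U$ has the form $j_U(f)([\mathrm{id}]_U)$ for some $f\in P$. If $k':P_U\to Q_W$ is any elementary embedding with $k'\circ j_U = j_W\circ i$ and $k'([\mathrm{id}]_U) = [\mathrm{id}]_W$, then
\[
k'([f]_U) \;=\; k'\bigl(j_U(f)\bigr)\bigl(k'([\mathrm{id}]_U)\bigr) \;=\; j_W(i(f))([\mathrm{id}]_W) \;=\; [i(f)]_W \;=\; k([f]_U),
\]
so $k'=k$. There is no genuine obstacle here: once the \L{}o\'s translation is in place, each clause of the lemma follows by specialization. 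The only subtle point is the hinge step $A\in U \Leftrightarrow i(A)\in W$, which combines the definition of $U$ as the pullback with the elementarity of $i$ used to rewrite $i(A)$ as the corresponding truth set over $i(X)$.
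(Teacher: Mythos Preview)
Your argument is correct. The \L{}o\'s translation you set up does exactly what is needed: well-definedness and elementarity drop out of the single biconditional, commutativity follows from the constant-function computation, and your uniqueness paragraph is the standard generator argument.

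The paper, however, takes a different and shorter route. Rather than reproving \L{}o\'s in this two-model setting, it simply observes that $U$ is the $P$-ultrafilter derived from the composite embedding $j_W\circ i$ using the seed $[\mathrm{id}]_W$, and that the map $k$ in the statement coincides with the associated factor map $[f]_U\mapsto (j_W\circ i)(f)([\mathrm{id}]_W)=[i(f)]_W$. Since the paragraph immediately preceding the lemma already records that the factor map of a derived ultrafilter is well-defined, elementary, commuting, and unique with the given seed, the lemma is then a one-line specialization. In effect, your proof unpacks by hand the content of that factor-map fact; the paper's proof black-boxes it. Your approach is more self-contained and makes the \L{}o\'s mechanism explicit, while the paper's approach highlights that the shift lemma is just the derived-ultrafilter construction applied to $j_W\circ i$, which is the conceptual point used repeatedly later.
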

\begin{proof}
    Note that $U$ is the $P$-ultrafilter derived from $j_W\circ i$ and $[id]_W$. Moreover, $k$ is the associated factor map, and the lemma follows. 
\end{proof}
Suppose \(i : M\to N\) is an elementary embedding and \(X\subseteq N\).
Then \(\hull^N(i[M]\cup X)\) denotes
\(\{i(f)(s) : s\in X^{<\omega}, f\in M\}\).
This is the smallest elementary substructure of \(N\) containing \(i[M]\cup X\).
\begin{lemma}
    Suppose that $D$ is a $P$-ultrafilter, $j:P\rightarrow M$, $j':P_D\rightarrow M'$, and $k:M\rightarrow M'$ is such that $M'=\hull^{M'}(\rng(k)\cup\rng(j'))$ and the following diagram commutes:
    \[\begin{tikzcd}
	P_D & {M'} \\
	P & {M}
	\arrow["{j'}", from=1-1, to=1-2]
	\arrow["j_D", from=2-1, to=1-1]
	\arrow["{j}"', from=2-1, to=2-2]
	\arrow["k"', from=2-2, to=1-2]
\end{tikzcd}\]
    Then if $D'$ is the $M$-ultrafilter derived from $k$ and $j'([id]_D)$,
    $M_{D'}=M'$, $k=j_{D'}$, and \([id]_{D'} = j'([id]_D)\).
\end{lemma}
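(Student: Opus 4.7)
The plan is to apply the universal property of the derived ultrafilter to produce a factor map $\ell \colon M_{D'} \to M'$, then exploit the hull hypothesis to show $\ell$ is surjective, hence (since $M'$ is transitive) an isomorphism; under the identification $M_{D'} = M'$ this forces $j_{D'} = k$.

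First, I set $a := j'([id]_D)$. By commutativity of the square, $a = j'(j_D([id]_X)) \in k(j(X))$, where $X$ is the underlying set of $D$. Thus the $M$-ultrafilter $D'$ derived from $k$ and $a$ lives on $j(X) \in M$, and the standard factor map associated to a derived ultrafilter is
\[ \ell \colon M_{D'} \to M', \qquad \ell([g]_{D'}) = k(g)(a). \]
This $\ell$ is a well-defined elementary embedding satisfying $\ell\circ j_{D'} = k$ and $\ell([id]_{D'}) = a$.

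Next, I would show $\rng(k) \cup \rng(j') \subseteq \rng(\ell)$. The inclusion $\rng(k) \subseteq \rng(\ell)$ is immediate from $\ell \circ j_{D'} = k$. For $\rng(j') \subseteq \rng(\ell)$, fix $y \in P_D$, write $y = [f]_D = j_D(f)([id]_D)$ for some $f \in P$, apply $j'$, and use $j' \circ j_D = k \circ j$ to compute
\[ j'(y) \;=\; j'(j_D(f))\bigl(j'([id]_D)\bigr) \;=\; k(j(f))(a) \;=\; \ell\bigl([j(f)]_{D'}\bigr). \]
Since $\rng(\ell)$ is the image of an elementary embedding into $M'$, it is closed under $M'$-definable functions, so $\rng(\ell) \supseteq \hull^{M'}(\rng(k)\cup\rng(j')) = M'$. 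Hence $\ell$ is a surjective elementary embedding into the transitive model $M'$, hence an isomorphism of transitive structures. Identifying $M_{D'}$ with $M'$ via $\ell$, the equation $\ell \circ j_{D'} = k$ collapses to $j_{D'} = k$, as required.

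The only slightly nontrivial move is the computation $j'([f]_D) = \ell([j(f)]_{D'})$ that places $\rng(j')$ inside $\rng(\ell)$; this is precisely what the commuting-square hypothesis is tailored to deliver, and without it the hull closure argument would have no way to absorb $\rng(j')$. Everything else is a direct application of the universal property of derived ultrafilters, entirely parallel in spirit to the proof of the Shift Lemma above.
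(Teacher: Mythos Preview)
Your proof is correct and follows essentially the same approach as the paper: define the factor map from $M_{D'}$ to $M'$ associated with the derived ultrafilter, then use the hull hypothesis together with the computation $j'([f]_D) = k(j(f))(a)$ (which is exactly where the commuting square enters) to show surjectivity. The paper packages the surjectivity argument slightly differently, first reducing $\hull^{M'}(\rng(k)\cup\rng(j'))$ to $\hull^{M'}(\rng(k)\cup\{a\})$ via the same computation, but the substance is identical.
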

\begin{proof}
    Note that there is a factor map $k':M_{D'}\rightarrow M'$ defined by $k'([f]_{D'})=k(f)(j'([id]_D))$ such that $k'\circ j_{D'}=k$. To see that $k'$ is onto, we have \begin{align*}      
        M'&=\hull^{M'}(\rng(k)\cup\rng(j'))\\
        &=\hull^{M'}(\rng(k)\cup \rng(j'\circ j_D)\cup\{j'([id]_D)\})\\
    &=\hull^{M'}(\rng(k)\cup\{j'([id]_D)\})\subseteq \rng(k')\end{align*} 

    Since \(k'\) is an isomorphism of transitive models, \(k'\) is the identity. Also \([id]_{D'} = k'([id]_{D'}) = j'([id]_D)\), proving the lemma.
\end{proof}
\begin{corollary}\label{cor: hulls and ultrapowers}
    If $k:M\rightarrow M'$, $M'=\hull^{M'}(\rng(k)\cup\{a\})$, and $a\in k(X)$ for some $X\in M$, then $M_D=M'$ and $k=j_D$ where $D$ is the $M$-ultrafilter over $X$ derived from $a$ and $k$ and \([id]_D = a\). 
\end{corollary}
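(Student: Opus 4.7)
The plan is to reduce this corollary directly to the preceding lemma by specializing its hypotheses to $P = M$ and $j = \mathrm{id}_M$. Under this specialization, a $P$-ultrafilter is nothing more than an $M$-ultrafilter, the ultrapower $P_D$ is $M_D$ with its canonical embedding $j_D : M \to M_D$, and the commuting square of the lemma collapses to the triangle formed by $j_D$, $k$, and an auxiliary map $j' : M_D \to M'$.

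First I would produce this auxiliary embedding $j'$. Letting $D$ be the $M$-ultrafilter over $X$ derived from $a$ and $k$, the standard universal property of the derived ultrafilter (noted in the excerpt just before the Shift Lemma) gives a unique elementary map $j' : M_D \to M'$ satisfying $j'([\mathrm{id}]_D) = a$ and $j' \circ j_D = k$. With $j = \mathrm{id}_M$, the identity $j' \circ j_D = k \circ j$ holds trivially, supplying the commuting square demanded by the preceding lemma.

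The only other hypothesis of that lemma is $M' = \hull^{M'}(\rng(k) \cup \rng(j'))$, and this follows at once from the corollary's assumption: since $a = j'([\mathrm{id}]_D) \in \rng(j')$, we have $\rng(k) \cup \{a\} \subseteq \rng(k) \cup \rng(j') \subseteq M'$, and the outer hull is pinned to $M'$ by the assumed hull equality $M' = \hull^{M'}(\rng(k) \cup \{a\})$. Applying the preceding lemma then yields $M_{D'} = M'$ and $k = j_{D'}$, where $D'$ is the $M$-ultrafilter derived from $k$ and $j'([\mathrm{id}]_D) = a$; but this $D'$ is precisely the ultrafilter $D$ named in the corollary, and the conclusion follows.

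I do not anticipate any genuine obstacle: the argument is pure bookkeeping, amounting to recognizing that the corollary is exactly the preceding lemma specialized to a trivial $j$, with the one content-bearing step being the hull inclusion noted above.
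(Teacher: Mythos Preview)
Your proposal is correct and matches the paper's own proof exactly: the paper simply states that this is the special case of the preceding lemma with $j=\mathrm{id}$ and $j'$ the factor embedding, and you have spelled out precisely those details. There is nothing to add.
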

\begin{proof}
    This is a special case of the previous lemma where $j=id$ and $j'$ is the factor embedding.
\end{proof}

\subsection{
Iterations and products of Prikry-type forcings}

 We define the \textit{Magidor support iteration} $\l \mathbb{P}_\alpha,\lusim{Q}_\beta\mid \alpha\leq \kappa, \beta<\kappa\r$ associated with a sequence of names \(\l \lusim{U}_\alpha\mid \alpha\in \Sigma\r\) where $\Sigma\subseteq \kappa$ is a set of measurable cardinals.
\begin{definition}[\cite{MAGIDOR197633,Gitik2010}] The forcing $\mathbb{P}_\alpha$ is defined inductively for each $\alpha\leq\kappa$: a condition $p$ belongs to $\mathbb{P}_\alpha$ if it is a function $p$ with $\dom(p)=\alpha$ such that:
\begin{enumerate}
    \item for every $\beta<\alpha$, $p\restriction\beta\in\mathbb{P}_\beta$.
    \item for every $\beta<\alpha$, $p\restriction \beta\Vdash_{\mathbb{P}_\beta}p(\beta)\in \lusim{Q}_\beta$, where $\lusim{Q}_\beta$ is trivial unless $\beta\in \Sigma$, in which case $\Vdash_{\mathbb{P}_\beta}\lusim{U}_\beta$ is a normal ultrafilter over $\beta$ and $\lusim{Q}_\beta$ is a $\mathbb{P}_\beta$-name for the usual Prikry forcing\footnote{Recall that $Pr(U)$ consists of pairs $(t,A)\in [\kappa]^{<\omega}\times U$, where $\max(t)<\min(A)$. It is equiped with two orders, $(t,A)\leq (s,B)$ iff $t\cap\max(s)=s$ and $A\subseteq B$, and $(t,A){\leq^*}(s,B)$ iff $t=s$ and $A\subseteq B$. Thus $1_{Pr(U)}=(\emptyset,\kappa)$.} $Pr(\lusim{U}_\beta)$ from \cite{Prikry}.  
    \item  there is a finite set $b_p$ such that for every $\beta\in \alpha\setminus b_p$, $p\restriction\beta\Vdash_{\mathbb{P}_\beta} p(\beta)\leq^* 1_{\lusim{Q}_\beta}$. 
    
\end{enumerate} 
We define $p\leq q$ iff for every $\beta<\alpha$, $p\restriction \beta\Vdash_{\mathbb{P}_\beta}p(\beta)\leq q(\beta)$. Also define $p\leq^* q$ iff for every $\beta<\alpha$, $p\restriction \beta\Vdash_{\mathbb{P}_\beta}p(\beta)\leq^* q(\beta)$.
\end{definition}
Magidor \cite[\S 2]{MAGIDOR197633} originally defined his forcing so that for a condition $p\in\mathbb{P}_\alpha$, and $\beta\in \Sigma\cap \alpha$, $p(\beta)=(t,\lusim{A})$. where $t^p_\beta$ is a finite sequence of ordinals and $\lusim{A}$ is a $\mathbb{P}_\beta$-name such that $\Vdash_{\mathbb{P}_\beta}\lusim{A}\in \lusim{U}_\beta$. A straightforward inductive argument shows that this is a dense set of conditions in the forcing defined above.

A set of ordinals $\Delta$ is called \textit{discrete} if it contains none of its accumulation points; that is, for every $\alpha\in \Delta$, $\sup(\Delta\cap\alpha)<\alpha$. In particular, $\Delta$ is an extremely thin non-stationary set, and so are all its restrictions to $\alpha<\sup(\Delta)$; similarly, if $\Delta\subseteq \kappa$ is discrete, then for every $\kappa$-complete ultrafilter $U$ over $\kappa$, $\kappa\notin j_U(\Delta)$. Note that if $\kappa$ is the least measurable limit of measurable cardinals, then the set of measurables below $\kappa$ is discrete. 

\begin{notation}
    For the rest of this section, we fix a discrete set $\Delta\subseteq \kappa$ of measurable cardinals and a sequence $\vec{U}=\l U_\alpha\mid \alpha\in \Delta\r$ such that $U_\alpha$ is a normal measure of Mitchell order $0$ over $\alpha$.
\end{notation}  We will consider the Magidor iteration associated to $\l \overline{U}_\beta\mid \beta\in \Delta\r$ where $\overline{U}_\beta$ is a $\mathbb{P}_\beta$-name for the filter generated by $U_\beta$  in $V^{\mathbb{P}_\beta}$ (see Proposition \ref{Prop: Properties of Magidor iteration}(2)).
\begin{remark}
Since the set $\Delta$ is discrete, a Magidor support here is always going to be non-stationary. Hence the forcing above inherits the properties of both the non-stationary and Magidor support iterations of Prikry-type forcings on the set $\Delta$.   
\end{remark}

\begin{proposition}\label{Prop: Properties of Magidor iteration}
    \begin{enumerate}
    \item  $\mathbb{P}_\kappa$ is $\kappa^+$-cc.
    \item If $\alpha\in \Delta$, $U_\alpha$ generates a normal ultrafilter in $V^{\mathbb{P}_\alpha}$. 
    \end{enumerate}
\end{proposition}
Item $(2)$ follows from L\'evy-Solovay. For the proof of $(1)$ see \cite[4.4]{MAGIDOR197633}.
\begin{proposition}
The set of conditions $p\in \mathbb P_\kappa$ such that for every $\beta\in \Delta$, $p(\beta) = \check{q}$ for some $q\in Pr(U_\beta)$ is $\leq$-dense in $\mathbb{P}_\kappa$.
\end{proposition}

\begin{proof}
  Let $p\in \mathbb{P}_\kappa$ be a condition, and without loss of generality, assume that for each $\beta\in \Delta$, $p(\beta)=(t_\beta,\lusim{A}_\beta)$, where $t_\beta\in [\beta]^{<\omega}$. For $\beta\in\Delta$, define $p^*(\beta)=(t_\beta,A^*_\beta)$ by setting $A^*_\beta=\{\gamma<\beta\mid p\restriction\beta\Vdash_{\mathbb{P}_\beta}\check{\gamma}\in \lusim{A}_\beta\}$. A L\'evy-Solovay-like argument shows that $p^*\in \mathbb{P}_\kappa$, and this is clearly a direct extension of $p$.
\end{proof}
If \(p \in \mathbb P_\alpha\) belongs to the dense set of the previous proposition, for \(\beta\in \Delta\cap \alpha\), we denote by \((t^p_\beta,A^p_\beta)\) the unique $q\in Pr(U_\beta)$ such that $p(\beta) = \check{q}$.


    
\begin{definition}
    The \textit{Magidor support product}
    \(\prod_{\alpha \in \Delta}\textit{Pr}(U_\alpha)\)
    consists of all functions
    \(p\) such that \(\dom(p) = \Delta\),
    \(p(\alpha)\in \textit{Pr}(U_\alpha)\),
    and \(\{\alpha\in \Delta : p(\alpha) \leq^* 1\}\) is cofinite.
\end{definition}
By the previous proposition, we obtain the following corollary:
\begin{corollary}[Magidor]
     $\mathbb{P}_\kappa$ is forcing equivalent to the Magidor support product $\prod_{\beta\in \Delta\cap\alpha}Pr(U_\beta)$.
\end{corollary}
We shall often treat $\mathbb{P}_\kappa$ as if it were the product $\prod_{\beta\in \Delta}Pr(U_\beta)$.

As usual in iterated forcing, one can construct
factor forcings \(\mathbb P_{[\alpha, \beta)}\in V^{\mathbb P_\alpha}\) such that \(\mathbb P_\alpha * \mathbb P_{[\alpha, \beta)} \cong \mathbb P_\beta\). This works in general for the Magidor iteration (and other iterations of Pr\'ikry-type forcings), but in the discrete case considered here, the factor forcing takes a simpler form: it is forcing equivalent to the Magidor product
\(\prod_{\xi \in \Delta\cap [\alpha, \beta)} \textit{Pr}(U_\xi)\). (One can use compute this forcing in \(V\) or \(V^{\mathbb P_\alpha}\); the former factor forcing is dense in the latter.)
We also define \(\mathbb P_{(\alpha,\beta)} =\prod_{\xi \in \Delta\cap (\alpha, \beta)} \textit{Pr}(U_\xi) \), etc.

\begin{lemma}[Prikry Lemma] For every $p\in\mathbb{P}_\kappa$, and every sentence $\sigma$ in the forcing language for $\mathbb{P}_\kappa$, there is $p^*\leq^* p$ such that $p^*{}||{} \sigma$.

\end{lemma}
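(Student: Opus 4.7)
The plan is to prove the Prikry Lemma by induction on $\alpha\leq\kappa$. At a successor stage $\alpha=\beta+1$ with $\beta\in\Delta$, one uses the factorization $\mathbb{P}_{\beta+1}\simeq\mathbb{P}_\beta\ast Pr(\bar U_\beta)$ together with Proposition \ref{Prop: Properties of Magidor iteration}(2) to reduce the statement to the classical Prikry lemma of \cite{Prikry} applied to $Pr(\bar U_\beta)$ in $V^{\mathbb{P}_\beta}$. Successor stages with $\beta\notin\Delta$ are trivial since $\lusim{Q}_\beta$ is forced to be trivial. So the real content is the limit case, and in particular $\alpha=\kappa$.

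For a limit $\alpha$, fix $p\in\mathbb{P}_\alpha$ and a sentence $\sigma$ in the forcing language. Since $b_p$ is finite, pick $\gamma<\alpha$ with $b_p\subseteq\gamma$, and write $\mathbb{P}_\alpha\simeq \mathbb{P}_\gamma\ast\mathbb{P}_{[\gamma,\alpha)}$ and $p=p^-\cdot p^+$, where $p^+$ is a pure condition in $\mathbb{P}_{[\gamma,\alpha)}$. Two ingredients drive the limit step:
\begin{enumerate}
    \item[(a)] Let $\delta=\min(\Delta\cap[\gamma,\alpha))$. The direct-extension order $\leq^*$ on $\mathbb{P}_{[\gamma,\alpha)}$ restricted to pure conditions is $\delta$-directed-closed, since at each coordinate $\beta\in\Delta\cap[\gamma,\alpha)$ the order $\leq^*$ on $Pr(\bar U_\beta)$ is $\beta$-closed and $\beta\geq\delta$; the Magidor support allows coordinate-wise glueing. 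Moreover, by Proposition \ref{Prop: Properties of Magidor iteration}(1), $|\mathbb{P}_\gamma|\leq\gamma<\delta$.
    \item[(b)] Working in $V^{\mathbb{P}_\gamma}$, the forcing $\mathbb{P}_{[\gamma,\alpha)}$ is again a discrete Magidor iteration of Prikry forcings (over the discrete set $\Delta\cap[\gamma,\alpha)$), so the inductive hypothesis produces pure extensions that decide statements, below any given condition of $\mathbb{P}_\gamma$.
\end{enumerate}

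The heart of the argument is a fusion: enumerate a maximal antichain $\{q_\tau:\tau<|\mathbb{P}_\gamma|\}$ of conditions in $\mathbb{P}_\gamma$ extending $p^-$, and build a $\leq^*$-decreasing sequence $\langle r_\tau:\tau<|\mathbb{P}_\gamma|\rangle$ of pure extensions of $p^+$ such that $q_\tau\Vdash_{\mathbb{P}_\gamma} r_\tau\text{ decides }\sigma$. Successor stages use (b), applied in $V^{\mathbb{P}_\gamma}$ below $q_\tau$. Limit stages $\tau<|\mathbb{P}_\gamma|<\delta$ are handled by taking a $\leq^*$-lower bound using (a). A final $\leq^*$-lower bound $r^*$ is then a pure extension of $p^+$ such that, for every $\tau$, $q_\tau$ forces $r^*$ to decide $\sigma$. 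Now the map $q\mapsto(\text{the $\sigma$-value forced by }(q,r^*))$ is a partition of the antichain into two pieces, so by applying the inductive hypothesis to $p^-$ in $\mathbb{P}_\gamma$ with respect to the $\mathbb{P}_\gamma$-sentence ``$r^*\Vdash\sigma$'', we find $p^{-*}\leq^* p^-$ deciding this, and then $p^*:=p^{-*}\cdot r^*\leq^* p$ decides $\sigma$ as required.

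The main obstacle is the case $\alpha=\kappa$, since then the tail $\mathbb{P}_{[\gamma,\kappa)}$ is itself of length $\kappa$ and the inductive hypothesis in (b) is not directly available. The standard fix, which is the substance of the general framework of \cite{Gitik2010}, is to run the same fusion with the strict inequality $|\mathbb{P}_\gamma|<\delta$ doing the work that the induction would have done: since one only ever fuses over $|\mathbb{P}_\gamma|$-many requirements, and the closure $\delta$ of $\leq^*$ on the tail strictly exceeds $|\mathbb{P}_\gamma|$, one can carry out the fusion without invoking the Prikry property of the tail forcing as a black box; the Prikry property of the individual $Pr(\bar U_\beta)$ at each fusion step suffices to advance. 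This completes the proof.
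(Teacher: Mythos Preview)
The paper does not prove this lemma; the sentence preceding it simply cites Gitik's handbook chapter \cite{Gitik2010} for the general result, and no argument is given in the paper. So there is no in-paper proof to compare against in detail.

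That said, your inductive outline has a genuine circularity at the limit step that your final paragraph does not close. In the fusion over the maximal antichain $\{q_\tau\}$ of $\mathbb{P}_\gamma$, each successor step asks for a pure extension $r_{\tau+1}\leq^* r_\tau$ in the tail $\mathbb{P}_{[\gamma,\alpha)}$ such that $(q_{\tau+1},r_{\tau+1})$ decides $\sigma$. Producing such an $r_{\tau+1}$ is precisely the Prikry property for the tail forcing (relative to the fixed lower part $q_{\tau+1}$), and this is \emph{not} delivered by induction on $\alpha$: the tail $\mathbb{P}_{[\gamma,\alpha)}$ is not one of the $\mathbb{P}_\beta$ for $\beta<\alpha$, and upon reindexing its length need not drop (e.g.\ when $\alpha$ is an indecomposable limit of $\Delta$). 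The problem thus already appears at the first limit point of $\Delta$, not only at $\alpha=\kappa$. Your proposed fix, that ``the Prikry property of the individual $Pr(\bar U_\beta)$ at each fusion step suffices to advance,'' is not an argument: the sentence $\sigma$ speaks about the whole iteration, and the Prikry property of a single coordinate cannot produce a tail condition deciding it.

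What the proof in \cite{Gitik2010} does differently is to fuse \emph{along the iteration} rather than over an antichain of a fixed lower part: one builds a $\leq^*$-decreasing fusion sequence indexed by $\beta\in\Delta\cap[\gamma,\alpha)$, at stage $\beta$ invoking the inductive hypothesis for the proper initial segment $\mathbb{P}_\beta$ (which \emph{is} available) together with the $\beta$-closure of $\leq^*$ on $\mathbb{P}_{[\beta,\alpha)}$ to treat all possible lower parts simultaneously. That is the missing idea. (A minor point: $|\mathbb{P}_\gamma|\leq\gamma$ does not follow from the $\gamma^+$-cc; what you actually need, and what is true, is $|\mathbb{P}_\gamma|<\delta$, which holds because $\delta$ is inaccessible and $\mathbb{P}_\gamma\in V_\delta$.)
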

    \subsection{On ground model ultrafilters that generate ultrafilters in the extension}
    In this subsection, we provide a version of Theorem \ref{cor: cor from Kaplan} which provides slightly more information in the context of discrete sequences of measures.
\begin{definition}
    Suppose \(\lambda\) is a measurable cardinal and $\Sigma\subseteq \lambda$ is unbounded. 
    A $\lambda$-complete ultrafilter $U$ over $\lambda$ is called \textit{$\Sigma$-mild} if there is a function $f:\lambda\rightarrow \lambda$ such that
    $$ 
[id]_U\leq j_U(f)(\lambda)<\min(j_U(\Sigma)\setminus\lambda)$$
\end{definition}

Note that if $U$ is a normal measure with $\Sigma\notin U$, then $U$ is $\Sigma$-mild. Also, every $\Sigma$-mild ultrafilter is a $p$-point.



\begin{theorem}\label{Thm: Delta-mild generates an ultraiflter}
If $U$ is a $(\Delta\cap \lambda)$-mild ultrafilter over a cardinal $\lambda\notin\Delta$, then $U$ generates a $\lambda$-complete ultrafilter in $V^{\mathbb P_\kappa}$.
\end{theorem}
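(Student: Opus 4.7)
The plan is to factor the Magidor iteration and lift the ultrapower embedding $j_U : V \to V_U$.

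First, factor $\mathbb{P}_\kappa \cong \mathbb{P}_\lambda \times \mathbb{P}^{\geq \lambda}$, where $\mathbb{P}^{\geq \lambda}$ is the Magidor iteration on $\Delta \setminus \lambda$. Since $\lambda \notin \Delta$, the cardinal $\mu := \min(\Delta \setminus \lambda)$ is measurable and strictly above $\lambda$, so the direct extension order on $\mathbb{P}^{\geq \lambda}$ is $\mu$-closed. Combined with the Prikry Lemma, $\mathbb{P}^{\geq \lambda}$ adds no new subsets of $\lambda$, so it suffices to produce the generated ultrafilter in $V[G_\lambda]$, where $G_\lambda := G \cap \mathbb{P}_\lambda$.

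Next, analyze the ultrapower. Since $\crit(j_U) = \lambda$, the poset $j_U(\mathbb{P}_\lambda)$ is the Magidor iteration in $V_U$ on $j_U(\Delta \cap \lambda)$ and factors as $\mathbb{P}_\lambda \times \mathbb{Q}$, where $\mathbb{Q}$ is the Magidor iteration in $V_U$ on $j_U(\Delta \cap \lambda) \cap [\lambda, j_U(\lambda))$. The mildness hypothesis gives $\eta := \min(j_U(\Delta \cap \lambda) \setminus \lambda) > [id]_U \geq \lambda$, so $\mathbb{Q}$'s first coordinate is strictly above $\lambda$ and the direct extension order on $\mathbb{Q}$ is $\eta$-closed in $V_U$. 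The core step is to construct a $V_U[G_\lambda]$-generic filter $H'$ for $\mathbb{Q}$, yielding a lift $j^* : V[G_\lambda] \to V_U[G_\lambda \times H']$ of $j_U$. This uses the $\lambda$-completeness of $U$ (so that $V_U$ is closed under $<\lambda$-sequences from $V$), the $\eta$-closure of $\mathbb{Q}$ in $V_U$, and a diagonalization inside $V[G]$ over the $V_U[G_\lambda]$-dense sets of $\mathbb{Q}$, mirroring the construction underlying Kaplan's theorem (Theorem~\ref{cor: cor from Kaplan}), with mildness playing the role of the automatic condition $\lambda \notin j_U(\Delta \cap \lambda)$ that holds in the normal case.

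Once the lift $j^*$ is in hand, $U^* := \{A \in P(\lambda)^{V[G_\lambda]} : [id]_U \in j^*(A)\}$ is a $\lambda$-complete ultrafilter in $V[G_\lambda]$ extending $U$. The remaining step is to verify that $U^*$ equals the filter generated by $U$, i.e.\ that every $A \in U^*$ contains some $B \in U$; this follows by applying the Prikry property to a name for $A$ together with the $\lambda$-completeness of $U$ on the small fragments of $\mathbb{P}_\lambda$ picked out by mildness, which lets us reduce a potential decision at level $\alpha$ to information from coordinates at most $f(\alpha)$ and then take a uniform $U$-majority. The main obstacle I expect is the bookkeeping in the construction of $H'$, and then showing that the derived $U^*$ is truly generated by $U$ rather than merely an extension of $U$.
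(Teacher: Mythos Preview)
Your plan has two genuine gaps.

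\textbf{Building the generic $H'$ does not work, and is not the paper's route.} You propose to construct a $V_U[G_\lambda]$-generic $H'$ for $\mathbb{Q}$ by diagonalization, citing the $\eta$-closure of the direct extension order in $V_U$. But $V_U$ is closed only under ${<}\lambda$-sequences from $V$, while from $V$ there are on the order of $2^\lambda$ dense subsets of $\mathbb{Q}$ lying in $V_U[G_\lambda]$; you cannot take lower bounds along an external sequence of that length. Worse, such an $H'$ would add a cofinal $\omega$-sequence in $\eta$, and there is no reason $\cf^{V[G]}(\eta)=\omega$; compare the paper's own remark in Section~\ref{Section: Canonical extension} that no $M$-generic for the upper factor exists in $V[G]$ when the relevant set of measurables is unbounded. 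The paper does not lift $j_U$ here at all: it invokes the Gitik--Kaplan criterion, which reduces the theorem to showing that for every $\mathbb{P}_\lambda$-name $\dot A$ for a subset of $\lambda$, some $j_U(p)$ with $p\in G$ decides $[id]_U\in j_U(\dot A)$.

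\textbf{You misidentify the role of mildness, and your step 4 is the whole theorem.} The condition $\lambda\notin j_U(\Delta\cap\lambda)$ is automatic from discreteness and $\lambda\notin\Delta$; what mildness contributes is the function $f$ with $[id]_U\le j_U(f)(\lambda)$. This is precisely what the paper's density argument uses: the Prikry property gives a pure $q=\langle A_\gamma\rangle_{\gamma\in j_U(\Delta)\cap[\lambda,j_U(\lambda))}$ deciding $[id]_U\in j_U(\dot A)$; writing $q=[\alpha\mapsto\langle A^{(\alpha)}_\gamma\rangle]_U$, one finds by density a $p^*\in G$ whose $\gamma$-th measure-one set is contained in $\bigl(\Delta_{\delta<\gamma}A^{(\delta)}_\gamma\bigr)\setminus\bigl(\max(f(\sup(\Delta\cap\gamma)),\sup(\Delta\cap\gamma))+1\bigr)$. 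At $\gamma=\min(j_U(\Delta)\setminus\lambda)$ one has $\sup(j_U(\Delta)\cap\gamma)=\lambda$, so the removed initial segment reaches up through $j_U(f)(\lambda)\ge[id]_U$, forcing $A^{j_U(p^*)}_\gamma\subseteq A_{[id]_U,\gamma}=A_\gamma$; above this $\gamma$ the tail of $j_U(\Delta)$ already exceeds $[id]_U$. This diagonal-intersection-with-$f$-subtraction is the heart of the proof; your sketch of step 4 does not arrive at it, and steps 2--3 are an unjustified detour that cannot be completed as written.
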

\begin{proof}
    Let \(G\subseteq \mathbb P_\lambda\) be \(V\)-generic. Note that it suffices to show that \(U\) generates an ultrafilter in \(V[G]\), because the factor forcing \(\mathbb P_{[\lambda,\kappa)}\) does not add subsets to \(\lambda\) since \(\lambda\notin \Delta\).

    By Gitik and Kaplan \cite[Prop.\ 2.6 and Prop.\ 2.7]{GITIK_KAPLAN_2023}, since $\lambda$ and $\lambda^+$ remain regular in $V[G]$, it suffices to prove that $j_U[G]$ decides all
    statements of the form $[id]_U\in j_U(\dot{A})$, where $\dot{A}$ is a $\mathbb{P}_\kappa$-name for a subset of $\lambda$. 
    Given such a name $\dot{A}$, we may assume that it is a $\mathbb{P}_\kappa\restriction\lambda$-name. 
    
    Let us work in $V_U[G]$. Let $\dot{A}_0$ be the $\mathbb{P}_{(\lambda,j_U(\lambda))}$-name obtained from $j_U(\dot{A})$. 
    By the Prikry property, there is a condition $q\leq^* 1_{\mathbb{P}_{(\lambda,j_U(\lambda))}}$ such that $q \ ||\  ``[id]_U\in \dot{A}_0"$ and therefore there is $p\in G$ such that
    $$p\Vdash q \ || \ ``[id]_U\in j_U(\dot{A})."$$ In other words, $(p,q) \ || \ ``[id]_U\in j_U(\dot{A})"$. Let us show that there is $p'\in G$ such that $j_U(p')\leq (p,q)$.
    Fix a sequence \(\langle A_\gamma : \gamma \in j_U(\Delta)\cap (\lambda,j_U(\lambda)\rangle\in V_U\) such that for all \(\gamma\in \dom(q)\), 
    $q(\gamma) = (\emptyset,A_\gamma)$. Let $$\alpha\mapsto \l A^{(\alpha)}_\gamma\mid \gamma\in \Delta\cap[\pi(\alpha),\lambda)\r$$ represent $q$ in $j_U$ where $\pi$ is the function representing $\lambda$ in $V_U$. 
    
    Let us proceed with a density argument. Let $p\in\mathbb{P}_\kappa$. We shrink $p$ in the interval $[\gamma_0,\lambda)$, where $\gamma_0=\max(b_p\cap\lambda)+1$. For $\gamma\geq\gamma_0$, $\gamma\in\Delta\cap\lambda$, define $A^0_\gamma=\Delta_{\delta<\gamma}A^{(\delta)}_\gamma$.  Moreover, given that $f$ witnesses that $U$ is $(\Delta\cap\lambda)$-mild, if $f(\sup(\Delta\cap \gamma))\geq\gamma$ we let $A^*_\gamma=A^0_\gamma \setminus \sup(\Delta\cap \gamma)$ and if $f(\sup(\Delta\cap\gamma))<\gamma$ we let $$A^*_\gamma=A^0_\gamma\setminus \max\{ f(\sup(\Delta\cap\gamma)),\sup(\Delta\cap\gamma)\}.$$ We shrink $p$ so that $A^p_\gamma\subseteq A^*_\gamma$ for $\gamma\in \Delta\setminus \gamma_0$, and by genericity we can find such a $p^*\in G$ such that $p^*\leq p$. Then
$j_U(p^*)\restriction\lambda =p^*\leq p$ and $j_U(p^*\restriction [\lambda,\kappa))\leq j_U(p\restriction [\lambda,\kappa))$. For $\gamma\in j_U(\Delta)\cap [\lambda,j_U(\kappa))$, we note that $\lambda\notin j_U(\Delta)$ by our assumption that $\lambda\notin\Delta$ and $\Delta$ is discrete. We need to argue that $A^{j_U(p^*)}_\gamma\subseteq A_\gamma$.  
Denote by $$j_U(\alpha\mapsto \l A^{(\alpha)}_\gamma\mid \gamma\in \Delta\cap[\pi(\alpha),\lambda)\r)(\beta)=\l A_{\beta,\gamma}\mid \gamma\in j_U(\Delta)\cap [j_U(\pi)(\beta),j_U(\lambda))\r.$$
Then by definition, $A_{[id]_U,\gamma}=A_\gamma$ for every $\gamma\in j_U(\Delta)\cap [\lambda,j_U(\lambda))$. If $\gamma=\min(j_U(\Delta)\setminus\lambda)$, then $j_U(f)(\sup(j_U(\Delta)\cap\lambda))=j_U(f)(\lambda)\geq[id]_U$ and we have $$A^{j_U(p^*)}_\gamma\subseteq\Delta_{\beta<\gamma}A_{\beta,\gamma}\setminus j_U(f)(\lambda)+1\subseteq A_{[id]_U,\gamma}=A_\gamma.$$ If $\gamma>\min(j_U(\Delta)\setminus\lambda)$, then $\sup(j_U(\Delta)\cap\gamma)\geq\min(j_U(\Delta)\cap\gamma)\geq [id]_U$, hence we still have $A^{j_U(p^*)}_\gamma\subseteq A_\gamma$. It follows that $j_U(p^*)\leq p^\smallfrown q$ as wanted.
\end{proof}

    The discreteness of the set $\Delta$ is essential here; without it, even $0$-order normal measures might not generate ultrafilters. 
    We give an example of this behavior due to Ben-Neria in Example~\ref{example 1}.

\begin{example}\label{exmaple 2}
    In Section~\ref{subSection: two normals}, we will give an example where $U$ does not generate an ultrafilter in $V[G]$ even though $[id]_U<\min(j_U(\Delta)\setminus \kappa)$. Hence the assumption regarding the existence of the function $f$ cannot be removed in Theorem \ref{Thm: Delta-mild generates an ultraiflter}.
    
    Note that if $[id]_U$ is below the first  $M_U$-Ramsey cardinal (for example) above $\lambda$, then $U$ is $\Delta$-mild. Moreover, by the counterexample of Section~\ref{subSection: two normals}, $[id]_U$ being below the first $M_U$-measurable cardinal does not suffice for $U$ to be $\Delta$-mild, or even for \(U\) to generate an ultrafilter in \(V[G]\).
\end{example}\section{The complete iteration}\label{Section: Complete iteration}
   One aspect of understanding the lifts $\overline W$ of an ultrafilter $W$ to a generic extension \(V[G]\) is to analyze the factor map $k : V_W\to j_{\overline W}^{V[G]}(V)$ defined by $k([f]_W)=[f]_{W^*}^{V[G]}$. In the case of a generic extension by an iteration of Prikry forcings, the factor map can often be described in terms of the complete iteration:
\begin{definition}\label{def: complete iteration}
    Let $\vec{U}=\l U_\delta\mid \delta\in \Delta\r$ be a sequence such that $U_\delta$ is a normal measure on $\delta$ and $\Delta$ is any set of measurables. Let us define, by transfinite recursion, the \textit{complete iteration} associated with the sequence \(\vec U\).
    This will be an iterated ultrapower of \(V\) denoted by $$\l N_\alpha, i^{(\vec{U})}_{\alpha,\beta}\mid \alpha\leq\beta\leq \theta\r$$ 
    starting with \(N_0 = V\).
    Simultaneously, we will define sets of ordinals $\l s^\alpha_\delta\mid \delta \in i_{0,\alpha}(\Delta)\r$ starting with \(s^0_\delta = \emptyset\) for all \(\delta\in \Delta\).
    
    For \(\alpha < \theta\), let $W_\alpha = i_{\alpha}(\vec{U})_{\delta_\alpha}$ where $\delta_\alpha\in i_\alpha(\Delta)$ is the minimal $\delta$ such that the set $s^\alpha_\delta$ is finite. Let $N_{\alpha+1} = (N_\alpha)_{W_\alpha}$, and let $i_{\alpha,\alpha+1}:N_\alpha\rightarrow N_{\alpha+1}$ be the ultrapower embedding associated with \(W_\alpha\). Define $$s^{\alpha+1}_\beta=\begin{cases} \emptyset & \beta\in i_{\alpha+1}(\Delta)\setminus i_{\alpha,\alpha+1}[\Delta]\\
   s^\alpha_{\delta_\alpha}\cup\{\delta_\alpha\} & \beta=i_{\alpha,\alpha+1}(\delta_\alpha)\\
    s^\alpha_\beta & o.w.\end{cases}.$$ At limit steps $\rho$, we take the direct limits of the embeddings and let $s^\rho_\beta=\bigcup\{ s^\alpha_{\beta'}\mid \alpha<\rho, i_{\alpha,\rho}(\beta')=\beta\}$ for every $\beta\in i_{\rho}(\Delta)$. The sequence $\l s^\theta_\alpha\mid \alpha\in i_{0,\theta}(\Delta)\r$ is called the \textit{sequence of sets of indiscernibles} associated with the complete iteration of \(\vec U\).
\end{definition}
We usually omit the superscript $\vec{U}$ from $i^{\vec{U}}$, and we denote $\theta$ by $\infty$ when the length of the iteration does not play a significant role.

For example, if $\Delta$ consists of a single measurable cardinal, then the complete iteration is simply the $\omega^{\text{th}}$ iterated ultrapower of the only normal measure in $\vec{U}$, and the sequence of sets of indiscernibles consists of the corresponding sequence of iteration points. 
\begin{proposition}
    Let $i:V\rightarrow N$ be the complete iteration of $\vec{U}$, where $\dom(\vec{U})=\Delta$ is discrete.  Then:
    \begin{enumerate}
        \item For every $\alpha$, $\delta_{\alpha+1}=i_{\alpha,\alpha+1}(\delta_\alpha)$.
        \item For each $\gamma\in \Delta$, there is a unique $\alpha=\alpha(\gamma)$  such that $\delta_\alpha=\gamma$.
        \item For each $\gamma\in \Delta$, $i_{0,\alpha}(\gamma)=\gamma$, and $i(\gamma)=i_{\alpha,\alpha+\omega}(\gamma)$, where $\alpha=\alpha(\gamma)$.
        
    \end{enumerate}
\end{proposition}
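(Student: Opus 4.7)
The plan is to dispose of (1) directly via a \L{}o\'s analysis of the ultrapower step, and then leverage (1) to carry out a transfinite recursion on the natural ordering of $\Delta$ to obtain (2) and (3) simultaneously.

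For (1), I analyze $i_{\alpha+1}(\Delta)\cap[\delta_\alpha, i_{\alpha,\alpha+1}(\delta_\alpha)]$. Any element there is $[f]_{W_\alpha}$ for some $f$ with $f(\eta)\in i_\alpha(\Delta)$ on a $W_\alpha$-large set. Since $\Delta$ (hence $i_\alpha(\Delta)$ in $N_\alpha$) is discrete and $W_\alpha$ is a normal measure on $\delta_\alpha$, the set $i_\alpha(\Delta)\cap\delta_\alpha$ is bounded below $\delta_\alpha$. Any $[f]_{W_\alpha}<\delta_\alpha$ must therefore be bounded below $\delta_\alpha$, and any $[f]_{W_\alpha}\geq\delta_\alpha$ must have either $f\equiv\delta_\alpha$ $W_\alpha$-a.e.\ (giving $i_{\alpha,\alpha+1}(\delta_\alpha)$) or $f(\eta)\geq\delta':=\min(i_\alpha(\Delta)\setminus(\delta_\alpha+1))$ a.e.\ (giving a value $\geq i_{\alpha,\alpha+1}(\delta')>i_{\alpha,\alpha+1}(\delta_\alpha)$). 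Thus $i_{\alpha,\alpha+1}(\delta_\alpha)$ is the unique element of $i_{\alpha+1}(\Delta)$ in $[\delta_\alpha,i_{\alpha,\alpha+1}(\delta_\alpha)]$. Below $\delta_\alpha$ the $s^{\alpha+1}$-values are inherited from $s^\alpha$ and remain infinite by the minimality of $\delta_\alpha$ at the previous stage, whereas $s^{\alpha+1}_{i_{\alpha,\alpha+1}(\delta_\alpha)}=s^\alpha_{\delta_\alpha}\cup\{\delta_\alpha\}$ is still finite. Hence $\delta_{\alpha+1}=i_{\alpha,\alpha+1}(\delta_\alpha)$.

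For (2) and (3), I enumerate $\Delta$ in its natural order as $\langle\kappa_\xi:\xi<\lambda\rangle$ and recursively set $\alpha(\kappa_\xi)=\sup\{\alpha(\kappa_\zeta)+\omega:\zeta<\xi\}$, with $\alpha(\kappa_0)=0$. The inductive invariant is that at stage $\alpha(\kappa_\xi)$, $i_{0,\alpha(\kappa_\xi)}(\Delta)$ consists exactly of the sups $i_{\alpha(\kappa_\zeta),\alpha(\kappa_\zeta)+\omega}(\kappa_\zeta)$ for $\zeta<\xi$, each carrying an infinite $s^{\alpha(\kappa_\xi)}$-value accumulated as a Prikry-like sequence over its own $\omega$-block via iterated application of (1), together with the fixed originals $\kappa_\eta$ for $\eta\geq\xi$, each with $s^{\alpha(\kappa_\xi)}$-value $\emptyset$. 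Two ingredients carry this through: first, each $\kappa_\eta$ with $\eta\geq\xi$ is a strong limit of cofinality $\kappa_\eta$ greater than every critical point used so far (by standard bounds on normal ultrapowers, all such crits lie below $\beth_\omega(\sup_{\zeta<\xi}\kappa_\zeta)<\kappa_\xi$), so it is fixed by each individual ultrapower step and hence by the direct limit, yielding $i_{0,\alpha(\kappa_\xi)}(\kappa_\eta)=\kappa_\eta$; second, the argument of (1) applied uniformly shows that no extraneous elements are introduced below $\kappa_\xi$ in passage to the direct limit. It follows that $\kappa_\xi$ is the minimum of $i_{0,\alpha(\kappa_\xi)}(\Delta)$ with finite $s^{\alpha(\kappa_\xi)}$, establishing both $\delta_{\alpha(\kappa_\xi)}=\kappa_\xi$ and $i_{0,\alpha(\kappa_\xi)}(\kappa_\xi)=\kappa_\xi$; uniqueness of $\alpha(\kappa_\xi)$ is then immediate from (1), which removes $\kappa_\xi$ itself from $i_{0,\alpha(\kappa_\xi)+1}(\Delta)$. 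For the last clause of (3), every critical point at stages $\beta\geq\alpha(\kappa_\xi)+\omega$ belongs to the slot of some $\kappa_\eta>\kappa_\xi$ and so exceeds $i_{\alpha(\kappa_\xi),\alpha(\kappa_\xi)+\omega}(\kappa_\xi)$ by inaccessibility of $\kappa_\eta$, so the latter ordinal is fixed thereafter and $i(\kappa_\xi)=i_{\alpha(\kappa_\xi),\alpha(\kappa_\xi)+\omega}(\kappa_\xi)$.

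The main obstacle is the bookkeeping at limit $\xi$ in the recursion: confirming that the stated inductive description of $i_{0,\alpha(\kappa_\xi)}(\Delta)$ is preserved under direct limits, with no new elements creeping in. This is really an iterated and uniform version of the discreteness/\L{}o\'s analysis used for (1), so no new ideas are required, but the organization requires care.
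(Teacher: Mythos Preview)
Your proof is correct and runs along the same lines as the paper's. The only difference is organizational: you carry out a global transfinite recursion on the enumeration of $\Delta$, maintaining an explicit description of $i_{0,\alpha(\kappa_\xi)}(\Delta)$ at each stage, whereas the paper simply fixes a single $\gamma\in\Delta$ and argues directly that $\gamma$ remains fixed (by its inaccessibility above all earlier critical points) until the strictly increasing sequence $\langle\delta_\beta\rangle$ reaches it. This local approach sidesteps exactly the limit-stage bookkeeping you flag as the main obstacle, so it is somewhat more economical; your version, on the other hand, yields a more explicit picture of the whole iteration. (Incidentally, the $\beth_\omega$ bound is more than you need: once you know by induction that $\kappa_\xi$ is fixed and hence still inaccessible in each intermediate model, any normal ultrapower by a measure on a smaller ordinal automatically fixes it.)
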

\begin{proof}
For $(1)$, we have that $i_\alpha(\Delta)\cap \delta_\alpha$ is bounded in $\delta_\alpha$, and therefore $$i_{\alpha+1}(\Delta)\cap i_{\alpha,\alpha+1}(\delta_\alpha)=i_\alpha(\Delta)\cap \delta_\alpha.$$
    Hence every $\delta\in i_{\alpha+1}(\Delta)\cap i_{\alpha,\alpha+1}(\delta_{\alpha})$ has been used infinitely many times and $\delta_{\alpha+1}\geq i_{\alpha,\alpha+1}(\delta_\alpha)$. By definition of 
    $\delta_\alpha$, equality must hold.
    
   For $(2)$ and $(3)$, fix any $\gamma\in\Delta$, then $\gamma$ is measurable, it is routine to show  inductively on $\beta$, if $\delta_\beta<\gamma$, then $i_{0,\beta}(\gamma)=\gamma$. Since the sequence $\delta_\alpha$ is strictly increasing, and since $\Delta$ is discrete, after less than $\gamma$-many steps of the iteration we reach a stage $\alpha$, such that $\gamma=\delta_\alpha$. This means that the measure $i_{0,\alpha}(\vec{U})_\gamma$ was not used before stage $\alpha$, and every previous measure was used infinitely many times already. By $(1)$, $\delta_{\alpha+n}=i_{\alpha,\alpha+n}(\gamma)$, for every $n<\omega$. At stage $\alpha+\omega$ we get:
    $$i_{\alpha,\alpha+\omega}(i_{0,\alpha}(\Delta))=[i_{0,\alpha}(\Delta)\cap\gamma]\cup i_{\alpha,\alpha+\omega}(i_{0,\alpha}(\Delta)\setminus \gamma)$$
    So the minimal element which can potentially be applied now is $i_{\alpha,\alpha+\omega}(\gamma)$, however, we have already used this measurable $\omega$-many times, so by the definition of the complete iteration we must go to the next element which makes the critical point of $i_{\alpha+\omega,\theta}$ greater than $i_{\alpha,\alpha+\omega}(\gamma)$. We conclude that $i_{\alpha+\omega,\theta}(i_{\alpha,\alpha+\omega}(\gamma))=i_{\alpha,\alpha+\omega}(\gamma)$ and therefore $i(\gamma)=i_{\alpha,\alpha+\omega}(\gamma)$ as wanted. Also we see that for every $\alpha$, $$i(\Delta)=i_{0,\alpha}(\Delta)\cap \delta_\alpha\cup\{i_{\alpha,\alpha+\omega}(\delta_\alpha)\}\cup i(\Delta)\setminus i_{\alpha,\alpha+\omega}(\delta_\alpha)$$
    
\end{proof}
\begin{definition}\label{Definition: Generic}
    Given a sequence of sets of ordinals $\vec{s}=\l s_\alpha\mid \alpha\in \Delta\r $, each of order type $\omega$, we associate a filter $G_{\vec{s}}$ on $\mathbb{P}_\kappa$ consisting of all conditions $p$ such that for each $\alpha$, $t^p_\alpha$ is an initial segment of $s_\alpha$ and $s_\alpha\setminus t^p_\alpha\subseteq A^p_\alpha$.
\end{definition}
Suppose \(i : M \to N\) is elementary, $\mathbb{P}\in M$ is a poset, and 
\(G\subseteq i(\mathbb P)\) is \(N\)-generic. Then 
\[\hull^{N[G]}(i[M]\cup \{G\}) = \{i(\tau)_G : \tau\in M^{\mathbb P}\}.\]
Then \(\hull^N(i[M]\cup \{G\})\) is the smallest elementary substructure of \(N[G]\) containing \(i[M]\cup \{G\}\). That it is an elementary substructure can be proved by verifying the Tarski-Vaught criterion using the Fullness Lemma in forcing; see \cite[Corollary 25]{hamkins2012wellfoundedbooleanultrapowerslarge} for a different approach using Boolean ultrapowers. 
\begin{lemma}\label{Lemma: Generic}
Let $\vec{U}$ be a sequence of normal measures on a discrete set $\Delta$ with $\sup(\Delta)=\kappa$. Let $i=i_{0,\theta}:V\rightarrow N$ be the complete iteration of $V$ by $\vec{U}$, and $\vec{s}=\l s^\theta_\alpha\mid \alpha \in i(\Delta)\r$ be the sequence of sets of indiscernibles, and let \(G = G_{\vec s}\). Then the following hold:
\begin{enumerate}
    \item $G$ is $N$-generic for $i(\mathbb{P}_\kappa)$. 
    \item If $p$ is a pure condition then $i(p)\in G$.
    \item \(N[G] = \hull^{N[G]}(i[V]\cup \{G\})\).
    \item $N[G]$ is closed under \(\delta\)-sequences where \(\delta = \min(\Delta)\).
\end{enumerate}
\end{lemma}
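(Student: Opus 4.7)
The four parts are interconnected; I will treat them in the order (2), (1), (3), (4), since (1) builds on a density argument made possible by (2) and (4) reduces to a question about $N[G]$ via (3).

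For (2), after reducing via Proposition~\ref{Prop: Properties of Magidor iteration}(3) to a pure $p$ with $A^p_\gamma\in U_\gamma$ for every $\gamma\in\Delta$, fix $\gamma$ and set $\alpha=\alpha(\gamma)$. By the preceding proposition on the complete iteration, $i_{0,\alpha}(\gamma)=\gamma$, and for each $n<\omega$ the measure $W_{\alpha+n}$ applied at stage $\alpha+n$ equals $i_{0,\alpha+n}(U_\gamma)$. Hence $i_{0,\alpha+n}(A^p_\gamma)\in W_{\alpha+n}$, and so $\delta_{\alpha+n}=\crit(i_{\alpha+n,\alpha+n+1})\in i_{0,\alpha+n+1}(A^p_\gamma)$; this containment is preserved by $i_{\alpha+n+1,\theta}$, yielding $s^\theta_{i(\gamma)}=\{\delta_{\alpha+n}:n<\omega\}\subseteq i(A^p_\gamma)=A^{i(p)}_{i(\gamma)}$, so $i(p)\in G$.

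For (1), let $D\in N$ be dense open in $i(\mathbb{P}_\kappa)$ and write $D=i(F)(\vec a)$ with $F\in V$ and finite $\vec a\subseteq\{\delta_\alpha:\alpha<\theta\}$. Using the Prikry lemma together with Proposition~\ref{Prop: Properties of Magidor iteration}(3), I find in $V$ a pure condition $p^*$ such that $i(p^*)$ extends to a condition in $D$ under every finite assignment of stems at the coordinates mentioned by $\vec a$. By (2), $i(p^*)\in G$, and combining with the specific stems dictated by $G$ (via the sets $s^\theta_{i(\gamma)}$) produces an element of $D\cap G$. The combinatorial heart of this step is the density argument constructing $p^*$, which parallels the Magidor-support density argument in the proof of Theorem~\ref{Thm: Delta-mild generates an ultraiflter}.

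For (3), every $x\in N$ admits a Skolem-term representation $x=i(f)(\vec c)$ with $f\in V$ and $\vec c\in[\{\delta_\alpha:\alpha<\theta\}]^{<\omega}$. The parameter $i(f)$ belongs to $i[V]$, and each iteration point $\delta_\alpha$ is recoverable from $G$ as an element of the Prikry sequence $s^\theta_{i(\gamma)}=\bigcup_{p\in G}t^p_{i(\gamma)}$. This yields a $\Sigma_2$ definition of $x$ over $N[G]$ from $i(f)$ and $G$.

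For (4), the main obstacle, the plan is to establish $V\subseteq N[G]$; combined with the trivial $N[G]\subseteq V[G]$, this gives $V[G]=N[G]$, from which $\delta$-closure of $N[G]$ in the ambient universe follows because $V[G]$ is internally $\delta$-closed. To show $V\subseteq N[G]$, I would adapt a Bukovsky--Dehornoy-style characterization of $V$ as a generic extension of the iterated ultrapower: for each $y\in V$, one identifies a name $\dot y\in N$ whose $G$-evaluation equals $y$, built by a canonical decoding that uses the iteration points in $G$ to invert the Skolem representation and recover elements of $V$ outside $\rng(i)$. This ``reverse'' extraction is the delicate point, since elements of $V\setminus\rng(i)$ are not directly available in $N$ and must be reconstructed from the iteration-point data encoded by $G$.
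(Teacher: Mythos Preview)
Your treatment of (1)--(3) is essentially correct and close to the paper's. For (1) the paper simply cites Fuchs, while you sketch the density argument; for (2) your induction is the same as the paper's; for (3) your argument is in fact a bit cleaner than the paper's hull argument, using that in the discrete case every iteration point \(\delta_\alpha\) lies on the Prikry sequence \(s^\theta_{i(\gamma)}\) for some \(\gamma\in\Delta\), and \(i(\gamma)\in i[V]\).

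Part (4), however, has a genuine error. The inclusion \(V\subseteq N[G]\) is simply false: here \(G=G_{\vec s}\) is constructed inside \(V\), so \(N[G]\subseteq V\), and if \(V\subseteq N[G]\) held we would get \(V=N[G]\). But \(\min(\Delta)\) is measurable in \(V\) and singular (of cofinality \(\omega\)) in \(N[G]\), so \(V\neq N[G]\). The Bukovsky--Dehornoy phenomenon you allude to gives \(N[G]=\bigcap_\alpha N_\alpha\), a proper inner model of \(V\), not \(V\) itself; there is no way to ``reconstruct'' an arbitrary \(y\in V\) from a name in \(N\) and \(G\).

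The paper's route to (4) stays entirely inside \(N[G]\) and uses (3) directly. Given \(\langle\beta_\alpha\mid\alpha<\delta\rangle\in V\) with each \(\beta_\alpha\in N\), item (3) provides parameters \(a_\alpha\in V\) and \(\Sigma_2\) formulas \(\phi_\alpha\) with \(\beta_\alpha\) definable in \(N[G]\) from \(i(a_\alpha)\) and \(G\). Since \(\delta<\min(\Delta)=\crit(i)\), the sequence \(\langle i(a_\alpha),\phi_\alpha\mid\alpha<\delta\rangle\) equals \(i(\langle a_\alpha,\phi_\alpha\mid\alpha<\delta\rangle)\restriction\delta\in N\), and hence \(\langle\beta_\alpha\mid\alpha<\delta\rangle\) is definable in \(N[G]\) from this single element of \(N\) together with \(G\). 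This is the step you are missing: closure under \(\delta\)-sequences comes from the \(\delta\)-completeness of \(i\), not from recovering all of \(V\).
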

\begin{proof}
    The genericity of $G$ is due to Fuchs \cite{Fuchs2005-FUCACO}.
    For the second item, if $p$ is pure, then by induction we can prove that for every $\xi\leq\theta$, $s^\xi_\alpha\subseteq A^{i_{0,\alpha}(p)}_\alpha$. This implies in particular that $s^\theta_\alpha\subseteq A^{i(p)}_\alpha$, as wanted.  Suppose this was true for $\xi$, since $i_{\xi,\xi+1}$ is the ultrapower by the normal $W_\xi$ on $\delta_\xi$, and $A^{i_{0,\xi}(p)}\in W_\xi$, we conclude that $\delta_\xi\in A^{i_{\xi,\xi+1}(p)}_{\delta_\xi}$. Since below $\delta_\xi$, things do not change, we get that $s^{\xi+1}_{i_{\xi,\xi+1}(\delta_\xi)}\subseteq A^{i_{\xi,\xi+1}(p)}_{\delta_\xi}$. Note that above $\xi$, all the sequences are empty at this stage of the iteration. 

    For the third item, let $H = \hull^{N[G]}(i[V]\cup \{G\})$.
    To establish that $H = N[G]$, we will show that \(N\subseteq H\). Then \(H = N[G]\) since $H\prec N[G]$ and \(N\cup \{G\}\in H\).
    
    Let \(S = \bigcup\vec{s}\), the set of critical points of the complete iteration. Then \(S\) is itself discrete and the set of accumulation points \(\text{acc}(S)\) equals \(\text{acc}(i(\Delta))\). 
    Since every element of $N$ is of the form $i(f)(\kappa_1,...,\kappa_m)$ for some $f:[\kappa]^m\rightarrow V$, where $\kappa_1,...,\kappa_m\in S$, it suffices to prove that $S\subseteq H$. In turn, it suffices to prove that $i(\Delta)\subseteq H$, since any element in $S$ is a Prikry point associated with one of the elements of $i(\Delta)$ and therefore is definable from $G$ and $i(\Delta)$ in \(N[G]\).
    
    Suppose towards contradiction that $\zeta$ is the minimal element of $i(\Delta)$ not in $H$. Since $i(\Delta)$ is discrete, $\gamma=\sup(i(\Delta)\cap\zeta)<\zeta$ . Note that \(\gamma\notin H\), since otherwise easily \(\zeta\in H\).
    It follows that \(\gamma\in \text{acc}(i(\Delta))\) since otherwise \(\gamma\in i(\Delta)\), and hence \(\gamma\in H\) by the minimality of \(\zeta\). 
    Note that \(\gamma\subseteq H\). 
    Note that $\gamma\notin S$ since \(S\) is discrete and $ \gamma\in\text{acc}(i(\Delta))\subseteq \text{acc}(S)$.
    Therefore $\gamma=i(f)(\kappa_1,...,\kappa_m)$ for some $\kappa_1,...,\kappa_m<\gamma$ and so $\gamma\in H$, which is a contradiction.
    
    The final item follows from the previous one using the standard proof that the ultrapower of \(V\) by a \(\delta\)-complete ultrafilter is closed under \(\delta\)-sequences.
    (In fact, one can view \(N[G]\) as a Boolean ultrapower of \(V\) and apply \cite[Theorem 28]{hamkins2012wellfoundedbooleanultrapowerslarge}.)
    Fix a \(\delta\)-sequence of elements \(\langle x_\alpha\mid \alpha < \delta\rangle\subseteq N[G]\). 
    By the previous item, 
 each $x_\alpha$ is the interpretation of some $i(\tau_\alpha)$ via $G$. Note that $$\l i(\tau_\alpha)\mid \alpha<\delta\r=i(\l \tau_\alpha\mid \alpha<\delta\r)\restriction\delta\in N.$$ 
 Therefore $\l x_\alpha\mid \alpha<\delta\r\in N[G]$.
\end{proof}
The following proposition shows that any ultrafilter used in the complete iteration gives rise to an elementary embedding of the target model of the complete iteration.
\begin{proposition}\label{Proposition: indiscernible after ultrapower}
    If \(i=i_{0,\theta} : V\to N\) is the complete iteration \(V\) by \(\vec U\), \(\vec s\) is the associated sequence of sets of indiscernibles, and \(U\) is an ultrafilter on \(\delta\) applied at some stage of the complete iteration, then \(j_U^{N[G_{\vec s}]}\circ i = i\)
    and \(j_U^{N[G_{\vec s}]}(\vec s\hspace{.05cm}) = \vec{s}\setminus \{\delta\}\).
    \end{proposition}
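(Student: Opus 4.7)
The plan is to construct an explicit self-embedding $\sigma$ of $N[G_{\vec s}]$ that fixes $i[V]$ pointwise and sends $\vec s$ to $\vec s\setminus\{\delta\}$, and then identify $\sigma$ with the ultrapower $k = j_U^{N[G_{\vec s}]}$ via Corollary \ref{cor: hulls and ultrapowers}. Write $U = W_\alpha$, so $\delta = \delta_\alpha$, and let $\vec{t} = \vec s\setminus\{\delta\}$ denote the sequence obtained by removing $\delta$ from the unique coordinate $s_\gamma$ containing it, where $\gamma\in i(\Delta)$ is associated with the cluster of $\delta$.

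First I would verify that the filter $G_{\vec t}$ associated to $\vec t$ via Definition \ref{Definition: Generic} is $N$-generic for $i(\mathbb{P}_\kappa)$; this should follow from a Mathias-style genericity criterion, noting that each coordinate of $\vec t$ remains a cofinal $\omega$-subsequence of the corresponding coordinate of $\vec s$. By Lemma \ref{Lemma: Generic}(3), every element of each of $N[G_{\vec s}]$ and $N[G_{\vec t}]$ is $\Sigma_2$-definable from parameters in $i[V]$ together with the respective generic. I then define $\sigma : N[G_{\vec s}] \to N[G_{\vec t}]$ by $\sigma\restriction i[V] = \mathrm{id}$ and $\sigma(G_{\vec s}) = G_{\vec t}$, extending to all of $N[G_{\vec s}]$ through this definability characterization. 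Elementarity of $\sigma$ rests on the full indiscernibility of the iteration points over $i[V]$ within $N$, which itself reflects the symmetry of the complete iteration: finitely modifying the indiscernible sequence in a single cluster produces an isomorphic generic extension fixing both $N$ and $i[V]$.

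To identify $\sigma$ with $k$, I apply Corollary \ref{cor: hulls and ultrapowers} with generator $\delta$. Since $\mathrm{rng}(\sigma) \supseteq i[V]\cup\{G_{\vec t}\}$ and these parameters generate $N[G_{\vec t}]$ by Lemma \ref{Lemma: Generic}(3), the hull condition $N[G_{\vec t}] = \mathrm{Hull}^{N[G_{\vec t}]}(\mathrm{rng}(\sigma)\cup\{\delta\})$ is satisfied, so $\sigma = j_D^{N[G_{\vec s}]}$ where $D$ is the $N[G_{\vec s}]$-ultrafilter derived from $\sigma$ and $\delta$. To match $D$ with $U$, I check the case $A\in P(\delta)\cap N_\alpha$: by construction $\sigma$ agrees with the iteration step $i_{\alpha,\alpha+1}$ on $N_\alpha$-subsets of $\delta$ (both shift $\delta_\alpha$ upward by one position), so $\delta\in\sigma(A)$ iff $\delta\in i_{\alpha,\alpha+1}(A)$ iff $A\in W_\alpha = U$. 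The two claims $k\circ i = i$ and $k(\vec s) = \vec s\setminus\{\delta\}$ then follow at once from the definition of $\sigma$.

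The main technical difficulty will be establishing elementarity of $\sigma$: one needs genuine indiscernibility of the iteration points over $i[V]$ within $N$, which is stronger than the Prikry-indiscernibility of the generic sequence in $N[G_{\vec s}]$. The essential point is that the pair $(N, i)$ is invariant under finitely removing indiscernibles from $\vec s$, so that every term definable from $i[V]\cup\bigcup\vec s$ is preserved when $\vec s$ is replaced by $\vec t$.
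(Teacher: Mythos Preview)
Your approach has a circularity at its heart. The elementarity of $\sigma$, which you correctly identify as the main technical difficulty, is essentially the content of the proposition. You say it ``rests on the full indiscernibility of the iteration points over $i[V]$,'' and that ``the pair $(N,i)$ is invariant under finitely removing indiscernibles,'' but the only direct route to that invariance is to exhibit the shift map on the $\delta$-cluster and check that it fixes $i[V]$ --- and that shift map is precisely $j_U$ restricted to $N$. So you need $j_U^N\circ i=i$ to even define $\sigma$ as an elementary map, and that is half of what you are trying to prove. The paper avoids this by going in the opposite direction: it first identifies $j_U^{N[G_{\vec s}]}$ with $j_U^V\restriction N[G_{\vec s}]$ via the $\delta$-closure of $N[G_{\vec s}]$ (Lemma~\ref{Lemma: Generic}(4)), and then reads off $j_U\circ i = j_U(i)\circ j_U = i^U\circ j_U = i$ straight from the definition of the complete iteration, since applying $U$ once and then completing from $V_U$ reproduces $(N,i)$. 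The general stage-$\alpha$ case is reduced to this by factoring $i=i_{\alpha\infty}\circ i_{0\alpha}$ and running the argument inside $N_\alpha$.

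There is also a slip in your hull verification. You justify the hull condition by asserting that $i[V]\cup\{G_{\vec t}\}$ generates $N[G_{\vec t}]$ via Lemma~\ref{Lemma: Generic}(3). But that lemma applied to $\vec t$ yields generation from $i^U[V_U]\cup\{G_{\vec t}\}$, and $i[V]\subsetneq i^U[V_U]$ since $\delta$ lies in the latter but not the former. Indeed, if $i[V]\cup\{G_{\vec t}\}$ did generate, your $\sigma$ would be surjective onto a transitive model and hence the identity, contradicting $\sigma(G_{\vec s})=G_{\vec t}\neq G_{\vec s}$. The hull condition you actually need does hold, but for a different reason: $G_{\vec s}$ is definable from $G_{\vec t}$ together with $\delta$, and $i[V]\cup\{G_{\vec s}\}$ generates $N[G_{\vec s}]=N[G_{\vec t}]$. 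So $\delta$ is a genuinely necessary extra generator, consistent with the derived ultrafilter being nonprincipal.
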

    \begin{proof}
        We first consider the special case that \(\delta\) is the least element of the domain of \(\vec{U}\).
        Note that \(j_U^{N[G_{\vec s}]} = j_U^V\restriction N[G_{\vec s}]\) because \(N[G_{\vec s}]\) is closed under \(\delta\)-sequences. Therefore \[j_U^{N[G_{\vec s}]} \circ i = j_U\circ i = j_U(i)\circ j_U = i^U\circ j_U\] 
        where \(i^U : V_U\to N\) denotes the complete iteration of \(V_U\) by \(j_U(\vec U)\). By the definition of the complete iteration, \(i^U\circ j_U = i\),
        which proves that \(j_U^{N[G_{\vec s}]}\circ i = i\).
        Similarly, \(j_U^{N[G_{\vec s}]}(\vec s)\) is the sequence of sets of indiscernibles associated with the complete iteration of \(V_U\) by \(j_U(\vec U)\), which is \(\vec{s}\setminus \{\delta\}\).

        To prove the claim in general, suppose \(U\) is the ultrafilter used at stage \(\alpha\) of the complete iteration. Then \(i = i_{\alpha\infty}\circ i_{0\alpha}\) where \(i_{0\alpha} : V\to N_\alpha\) is an initial segment of the complete iteration \(V\) by \(\vec U\) and \(i_{\alpha\infty} : N_\alpha\to N\) is the complete iteration of \(N_\alpha\) by \(\vec W = i_{0\alpha}(\vec U)\restriction [\delta,\infty)\). Let \(\vec s^{\hspace{.07cm}0\alpha}\) denote the sequence of sets of indiscernibles associated with \(i_{0\alpha}\) and let \(\vec s^{\hspace{.07cm}\alpha\infty}\) denote the sequence of sets of indiscernibles associated with \(i_{\alpha\infty}\). Then \(G_{\vec s^{\hspace{.04cm}0\alpha}}\times G_{\vec s^{\hspace{.04cm}\alpha\infty}} = G_{\vec s}\).
        
        The special case of the claim proved above, applied in \(N_\alpha\) to \(U\) and \(\vec W\), implies that
        \(j_U^{N[G_{\vec s^{\hspace{.04cm}\alpha\infty}}]}\circ i_{\alpha\infty} = i_{\alpha\infty}\) and
        \(j_U^{N[G_{\vec s^{\hspace{.04cm}\alpha\infty}}]}(\vec s^{\hspace{.07cm}\alpha\infty}) = \vec s^{\hspace{.07cm}\alpha\infty}\setminus \{\delta\}\).
        This yields the full claim almost immediately, except that we have to verify that 
        \(j_U^{N[G_{\vec s^{\hspace{.04cm}\alpha\infty}}]}\)
        is the restriction of  \(j_U^{N[G_{\vec s}]}\) to \(N[G_{\vec s^{\hspace{.04cm}\alpha\infty}}]\).
        But
        \(j_U^{N[G_{\vec s}]} = j_U^{N[G_{\vec s^{\hspace{.04cm}0\alpha}}\times G_{\vec s^{\hspace{.04cm}\alpha\infty}}]}\), which extends \(j_U^{N[G_{\vec s^{\hspace{.04cm}\alpha\infty}}]}\)
        by the proof of the classical L\'evy-Solovay theorem  \cite{Levy1967MeasurableCA}. 
    \end{proof}

    \subsection{The canonical extension of an ultrafilter}\label{Section: Canonical extension}
Suppose that in \(V\), \(W\) is a \(\kappa\)-complete ultrafilter on a set \(X\). Using a construction due to Mitchell \cite{MithcellSkies}, we will define
a \(\kappa\)-complete $V[G]$-ultrafilter \(W^*\in V[G]\)
extending \(W\). (The extension \(W^*\) is essentially due to Magidor; it is the construction of \(W^*\) using complete iterations that is due to Mitchell.)

Let \(j : V\to M\) be the ultrapower embedding associated with \(W\). Let \(\mathbb Q = \mathbb P_{>\kappa}\),
so that \(j(\mathbb P_\kappa)\) can be naturally identified with \(\mathbb P_\kappa\times \mathbb Q\). 
To lift \(j\) to \(V[G]\), one might try to produce an \(M\)-generic \(H\) on \(\mathbb Q\) such that \(j[G]\subseteq G\times H\).
If \(\Delta\) is unbounded in \(\kappa\), however, no such \(M\)-generic \(H\) can exist in \(V[G]\): the ordinal \(\delta= \min(j(\Delta)\cap (\kappa,j(\kappa)))\) has uncountable cofinality in \(V[G]\), so in $V[G]$, there is no cofinal \(\omega\)-sequence in \(\delta\).

Instead, we will produce an inner model $N$ and an elementary embedding \(i: M\to N\) such that there is an \(N\)-generic filter \(H\) on \(i(\mathbb Q)\) with \(i\circ j[G]\subseteq G\times H\). Then by the Silver lifting criterion, $i\circ j$ lifts to an elementary embedding from $V[G]$ to $N[G\times H]$.

Let \(i : M \to N\) be the complete iteration of
\(M\) via the sequence \(j(\vec U)\restriction (\kappa,j(\kappa))\).  Let \(\vec s\) be the sequence
of sets of indiscernibles associated with the complete iteration,
and let \(G_{\vec s}\) be the associated \(N\)-generic filter on \(i(\mathbb Q)\) (Definition \ref{Definition: Generic}). 

Note that \(G\) is an \(N[G_{\vec{s}}]\)-generic filter on \(\mathbb P_\kappa\), and so 
by the Product Lemma, \(G\times G_{\vec{s}}\) is an \(N\)-generic filter on \(\mathbb P_\kappa\times \mathbb Q\).
We must verify that \(i\circ j[G]\subseteq G\times G_{\vec{s}}\). 
For each \(p\in \mathbb P_\kappa\),
\(j(p)\) has the form \((p,q)\)
where \(q\in \mathbb Q\) is a \textit{pure condition}, or in other words, a direct extension of
\(1_\mathbb Q\), and therefore \(i\circ j(p) = (p,i(q))\).
For every pure condition \(q\in \mathbb Q\), \(i(q)\) belongs to \(H\),
by Lemma  \ref{Lemma: Generic}. Hence $i\circ j(p)=(p,i(q))\in G\times G_{\vec{s}}$.

By the Silver lifting criterion, let  \(j^* : V[G]\to N[G\times G_{\vec{s}}]\) be the unique lift of $i\circ j$ such that \(j^*(G) = G\times G_{\vec{s}}\). 
\begin{definition}\label{Definition: Canonical extension}
    The \textit{canonical extension} of \(W\) to \(V[G]\) is the \(V[G]\)-ultrafilter \(W^*\) on \(X\) derived from \(j^*\) using \(i([\text{id}]_W)\).
\end{definition}
Note that \(W^*\) is \(\kappa\)-complete in \(V[G]\) since it is derived from \(j^*\), which has critical point at least \(\kappa\). 

The next proposition shows   \(N[G\times G_{\vec{s}}] = V[G]_{W^*}\) and \(j^* = j_{W^*}\).
\begin{proposition}\label{Prop: embedding of W*}
    Suppose  \(W\) is a \(\kappa\)-complete ultrafilter and \(W^*\) is the canonical extension of \(W\) to \(V[G]\). Then \(j_{W^*}\restriction V = i\circ j_W\) where
    \(i : V_W\to N\) is the complete iteration of \(V_W\) associated with \(j_W(\vec U)\restriction (\kappa,j_W(\kappa))\). Moreover, $j_{W^*}(G)=G\times G_{\vec{s}}$ where $\vec{s}$ is the sequence of sets of indiscernibles associated with the complete iteration. 
\end{proposition}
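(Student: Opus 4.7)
The plan is to invoke Corollary \ref{cor: hulls and ultrapowers} directly, taking the Silver lift $j^* : V[G]\to N[G\times G_{\vec{s}}]$ constructed in the preceding discussion in the role of $k$ and $a = i([\text{id}]_W)$ as the distinguished point. Writing $X$ for the underlying set of $W$ (so $X\in V\subseteq V[G]$), we have $[\text{id}]_W\in j_W(X)$, and therefore $a\in (i\circ j_W)(X)=j^*(X)$. Since $W^*$ was defined as the $V[G]$-ultrafilter derived from $j^*$ and $a$, once the corollary's hypotheses are in place it will yield both $V[G]_{W^*}=N[G\times G_{\vec{s}}]$ and $j_{W^*}=j^*$. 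The conclusions of the proposition will then drop out from the construction of $j^*$: on $V$ we have $j^*=i\circ j_W$ by definition of the Silver lift, and $j^*(G)=G\times G_{\vec{s}}$ by the lifting criterion.

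The only nontrivial task is to verify the hull condition
\[
N[G\times G_{\vec{s}}] = \hull^{N[G\times G_{\vec{s}}]}(\rng(j^*)\cup\{a\}).
\]
Call the right-hand side $H$. First, $G\times G_{\vec{s}}=j^*(G)\in H$, and since $G_{\vec{s}}$ is the canonical projection of $G\times G_{\vec{s}}$ onto the $\mathbb Q$-factor, also $G_{\vec{s}}\in H$. Second, every $x\in V_W$ has the form $j_W(f)([\text{id}]_W)$ for some $f\in V$, whence $i(x)=(i\circ j_W)(f)(a)=j^*(f)(a)\in H$; so $i[V_W]\subseteq H$. Finally, I will apply Lemma \ref{Lemma: Generic}(3) to the complete iteration $i:V_W\to N$ by $j_W(\vec U)\restriction(\kappa,j_W(\kappa))$: every element of $N$ is $\Sigma_2$-definable in $N[G_{\vec{s}}]$ from parameters in $i[V_W]\cup\{G_{\vec{s}}\}$, and both parameter sources already lie in $H$.

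The main obstacle I anticipate is upgrading the $\Sigma_2^{N[G_{\vec{s}}]}$-definability provided by Lemma \ref{Lemma: Generic}(3) to first-order definability inside the larger model $N[G\times G_{\vec{s}}]$. The fix is to observe that $N[G_{\vec{s}}]$ is a first-order definable inner model of $N[G\times G_{\vec{s}}]$ from the parameter $G_{\vec{s}}\in H$ (using the canonical product decomposition $\mathbb P_\kappa\times\mathbb Q$ and the fact that the $\mathbb P_\kappa$-name for $N[G_{\vec{s}}]$ lies in $N$), so the relativization of the $\Sigma_2$ formula to $N[G_{\vec{s}}]$ defines the same element in $N[G\times G_{\vec{s}}]$ using parameters already in $H$. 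This places every element of $N$ in $H$, and since $G\times G_{\vec{s}}\in H$ as well, the hull equality follows. Corollary \ref{cor: hulls and ultrapowers} then delivers the proposition.
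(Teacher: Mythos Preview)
Your proof is correct and follows essentially the same route as the paper's: both invoke Corollary \ref{cor: hulls and ultrapowers}, reduce to the hull condition, and verify it via Lemma \ref{Lemma: Generic}(3) applied inside \(V_W\) together with the fact that \(N[G\times G_{\vec s}]\) is a forcing extension of \(N[G_{\vec s}]\) by \(G\). You are in fact slightly more explicit than the paper about the relativization step (passing from \(\Sigma_2\)-definability in \(N[G_{\vec s}]\) to definability in \(N[G\times G_{\vec s}]\)); the cleanest justification for your fix is that \(V\) is definable in \(V[G]\) by ground-model definability, so by elementarity of \(j^*\) the class \(N\) (and hence \(N[G_{\vec s}]\)) is definable in \(N[G\times G_{\vec s}]\) from parameters in \(\rng(j^*)\subseteq H\).
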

\begin{proof}
    The proposition follows from Corollary \ref{cor: hulls and ultrapowers} once we show that $$N[G\times G_{\vec{s}}]=\hull^{N[G\times G_{\vec{s}}]}(i\circ j_W[V]\cup \{G, G_{\vec{s}},i([id]_W)\}).$$
    This in fact implies directly that $j_{W^*}=j^*$. To see the above, applying the third item of Lemma \ref{Lemma: Generic} inside of $M$, we  see that 
     $$N[G_{\vec{s}}]=\hull^{N[ G_{\vec{s}}]}(i\circ j_W[V]\cup \{G_{\vec{s}},i([id]_W)\}).$$
     Then we finish by noting that being a forcing extension of $N[G_{\vec{s}}]$, $N[G\times G_{\vec{s}}]$ is the hull (in itself) of $N[G_{\vec{s}}]\cup\{G\}$.
\end{proof}

In the case that \(W\) is $\Delta$-mild, Theorem \ref{Thm: Delta-mild generates an ultraiflter} shows that \(W^*\) is the unique extension of \(W\) in \(V[G]\) and therefore Proposition \ref{Prop: embedding of W*} can be used to analyze the ultrapower of the ultrafilter generated by $W$ in the generic extension. This is a slight generalization of Kaplan's theorem stated in the introduction. Although this is just the combination of Theorem \ref{Thm: Delta-mild generates an ultraiflter} and Proposition \ref{Prop: embedding of W*}, we record it for future use.
\begin{corollary}\label{Cor: Kaplan's Theorem}
Let $W$ be a $\Delta$-mild ultrafilter in $V$ and let $G\subseteq \mathbb{P}_\kappa$ be $V$-generic. In $V[G]$, $W$ generates its own canonical extension $W^*$ and $j_{W^*}\restriction V=i\circ j_W$, where
    \(i : V_W\to N\) is the complete iteration of \(M\) associated with \(j(\vec U)\restriction (\kappa,j(\kappa))\). Moreover, $j_{W^*}(G)=G\times G_{\vec{s}}$ where $\vec{s}$ is the sequence of sets of indiscernibles associated with the complete iteration.
    
\end{corollary}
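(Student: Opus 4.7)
The plan is to combine the two preceding main results of the section. Since $\Delta \subseteq \kappa$ is discrete with $\sup\Delta \le \kappa$, in particular $\kappa \notin \Delta$, so Theorem \ref{Thm: Delta-mild generates an ultraiflter} applies with $\lambda = \kappa$ to the $\Delta$-mild ultrafilter $W$, yielding that $W$ generates a $\kappa$-complete ultrafilter $\bar W$ in $V[G]$. In particular, $\bar W$ is the unique ultrafilter of $V[G]$ extending $W$.

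Next, I will identify $\bar W$ with the canonical extension $W^*$ of Definition \ref{Definition: Canonical extension}. For this it suffices to observe that $W^*$ extends $W$: by construction $W^*$ is the $V[G]$-ultrafilter derived from the lift $j^*$ of $i \circ j_W$ using the seed $i([\text{id}]_W)$, so for $A \in W$ we have $[\text{id}]_W \in j_W(A)$, and hence $i([\text{id}]_W) \in i \circ j_W(A) = j^*(A)$. Thus $A \in W^*$, and the uniqueness from the previous paragraph forces $W^* = \bar W$; this is the content of the phrase ``$W$ generates its own canonical extension.''

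Finally, the two displayed identities $j_{W^*} \restriction V = i \circ j_W$ and $j_{W^*}(G) = G \times G_{\vec s}$ are precisely the content of Proposition \ref{Prop: embedding of W*} applied to the ultrafilter $W^*$. There is no substantive obstacle to this proof: the essential work is carried out in Theorem \ref{Thm: Delta-mild generates an ultraiflter} and Proposition \ref{Prop: embedding of W*}, and the role of the corollary is merely to package their conjunction in a convenient form for later use.
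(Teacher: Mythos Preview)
Your proposal is correct and matches the paper's approach exactly: the paper itself remarks that this corollary ``is just the combination of Theorem~\ref{Thm: Delta-mild generates an ultraiflter} and Proposition~\ref{Prop: embedding of W*}.'' Your explicit verification that $W^*\supseteq W$ (so that uniqueness of the extension forces $W^*$ to coincide with the ultrafilter generated by $W$) is a helpful detail the paper leaves implicit.
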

We now give Ben-Neria's counterexample to normal measures generating ultrafilters after (non-discrete) Magidor iterations.
\begin{example}\label{example 1}
    Suppose \(\kappa\) is a measurable cardinal of Mitchell order \(2\), \(\Delta\) is the set of all measurable cardinals less than \(\kappa\), and \(G\subseteq \mathbb P_\kappa\) is \(V\)-generic for the Magidor iteration.
    
    Take any normal measure $U_1$ over $\kappa$ of Mitchell order 1 in the ground model. 
    Ben-Neria \cite[Notation 2.3]{Ben-Neria2014-BENFMI} defines an ultrafilter \(U_1^\times\) as
    the projection of \(U_1^*\) by the function \(d : \Delta\to \kappa\) mapping each measurable cardinal in \(\Delta\) 
    to the least element of its corresponding Pr\'ikry sequence.
    
    By Ben-Neria \cite[Proposition 2.1]{Ben-Neria2014-BENFMI}, $U_1^\times$ is a normal measure in $V[G]$. Let $U_0=U_1^\times\cap V$, so that $U_0$ is a normal ultrafilter in $V$ by \cite{GITIK_KAPLAN_2023}. Also $U_0$ must be of order $0$ (and in particular \(U_0 \neq U_1\)): otherwise, \(d\) would be a finite-to-one regressive function defined \(U_1^\times\)-large set. 
    
    Consider the ultrafilter $U_0^*$. Then $U_0^*$ is a normal ultrafilter \cite[2.1]{Ben-Neria2014-BENFMI} in $V[G]$ and we claim that $U_0^*\neq U_1^\times$, which yields two distinct extensions of $U_0$ to normal ultrafilters in $V[G]$. Indeed, since $U_1$ is of order $1$, the set $\Delta$ of measurables below $\kappa$ is in $U_1$, and therefore, by definition $d[\Delta]\in d_*(U_1)=U_1^\times$. On the other hand, as $U_0$ is of order $0$, $d[\Delta]\notin U_0^*$:
    otherwise, \(\kappa\) must appear in the Pr\'ikry sequence in \(V[G]_{U_0^*}\) associated to some element of \(j_{U_0^*}(\Delta)\), but by Proposition \ref{Prop: embedding of W*}, the elements of these Pr\'ikry sequences are critical points in the complete iteration of \(V_{U_0}\); \(\kappa\) cannot be such a critical point since \(\kappa\) is not measurable in \(V_{U_0}\).
    Hence $U_1^\times\neq U_0^*$.
\end{example}

\subsection{Mitchell's lemma} Next, we prove a variation of Mitchell's lemma from \cite{MithcellSkies}, which gives an analysis of ultrapowers of canonical lifts within complete iterates of \(V\) that is a key part of the proof of our classification results in Section \ref{subsection: classification}. 
The idea is to take a sufficiently complete ultrafilter \(W\) and a complete iteration \(i : V \to N\) and analyze the ultrapower embedding associated to the canonical extension of \(i(W)\) to \(N[G_{\vec s}]\), where \(\vec s\) is the sequence of sets of indiscernibles. It turns out that this embedding is simply the restriction to \(N[G_{\vec s}]\) of the ultrapower embedding \(j_W :  V\to V_W\).

Let $\vec{U}$ be a sequence of normal measures on a discrete set $\Delta\subseteq \kappa$. Let $i=i_{0,\infty}:V\rightarrow N$ be the complete iteration of $V$ by $\vec{U}$, and $\vec{s}=\l s^\infty_\alpha\mid \alpha \in i(\Delta)\r$ be the associated sequence of sets of indiscernibles, and let \(G = G_{\vec s}\). 

     \begin{lemma}[Mitchell]\label{Lemma: Mitchell}
        Suppose that $W$ is a \(\kappa\)-complete ultrafilter. Then \[j_{i(W)^*}^{N[G_{\vec s}]} = j_{W}\restriction N[G_{\vec{s}}].\] Moreover, the canonical factor map $k:N_{i(W)}\to j_{i(W)^*}(N)$ is given by the complete iteration of $N_{i(W)}$ by $i(j_W(\vec{U}))\restriction (\kappa, j_W(\kappa))$.  
         \end{lemma}
\begin{figure}
    \centering
\[\begin{tikzcd}
	&& {N[G_{\vec{s}}]_{i(W)^*}} \\
	{N[G_{\vec{s}}]} && {j_{W}(N)} \\
	N && {N_{i(W)}} \\
	V && {V_{W}}
	\arrow["{j_{i(W)^*}}", from=2-1, to=1-3]
	\arrow[phantom, sloped,"\subseteq", from=2-3, to=1-3]
	\arrow[phantom, sloped,"\subseteq", from=3-1, to=2-1]
	\arrow["{j_W}", from=3-1, to=2-3]
	\arrow["{j_{i(W)}}", from=3-1, to=3-3]
	\arrow["{i^{j_W(\vec{U})}_{\kappa,\infty}}"', from=3-3, to=2-3]
	\arrow["{i=i^{\vec{U}}}", from=4-1, to=3-1]
	\arrow["{j_{W}}", from=4-1, to=4-3]
	\arrow["{i^{j_W(\vec{U})}}"', curve={height=70pt}, from=4-3, to=2-3]
	\arrow["i"', from=4-3, to=3-3]
\end{tikzcd}\]
    \caption{Mitchell's diagram}
    \label{fig:Mitchell Lemma}
\end{figure}
\begin{proof}
    By replacing \(\kappa\) with the completeness of \(W\), we may assume that \(\kappa\) is a measurable cardinal, and in particular that \(\kappa\) is strongly inaccessible.

    We first note the bottom rectangle in
    Figure \ref{fig:Mitchell Lemma}
    commutes; this is just by the
    usual formula 
    \(j_{i(W)}\circ i = i \circ j_W\).

    Second, we note that for the same reason the quadrangle with vertices $V$, $N$, $j_W(N)$, and $V_W$ also commutes, using here that $j_W(\vec{U})\restriction\kappa=\vec{U}$ and therefore \[j_W(i) = i^{j_W(\vec U)} = i_{\kappa,\infty}^{j_W(\vec U)}\circ i\]

    For ease of notation, for any \(j : V\to M\) obtained by composing arrows in the diagram, let \(\vec{U}^M = j(\vec U)\). (The fact that this is well-defined follows from the commutativity we have established.)

    The commutativity established in the previous paragraphs allows us to show that
    \[j_W\restriction N = j_{i(W)^*}^{N[G_{\vec {s}}]}\restriction N\]
    To prove this, we first observe
    \(i^{\vec{U}^{N_{i(W)}}\restriction (\kappa,\infty)} =  i_{\kappa,\infty}^{\vec U^{V_W}},\)
    by the definition of the complete iteration: since \(i_{0\kappa}^{\vec U^{V_W}} = i\restriction V_W\), which embeds \(V_W\) into \(N_{i(W)}\), \(i_{\kappa,\infty}^{\vec U^{V_W}}\) is the complete iteration of \(\vec U^{N_{i(W)}}\) restricted to measures above \(\kappa\). 
    
    Then by Proposition \ref{Prop: embedding of W*}, 
    \[j_{i(W)^*}^{N[G_{\vec {s}}]}\restriction N = i^{\vec U^{N_{i(W)}}\restriction (\kappa,\infty)} \circ j_{i(W)} = i_{\kappa,\infty}^{\vec U^{V_W}}\circ j_{i(W)}\]
    By the commutativity facts established in the first two paragraphs, \(j_W\restriction N\) and
    \(i_{\kappa,\infty}^{\vec U^{V_W}}\circ j_{i(W)}^N\) agree on \(i[V]\). Moreover
    \(j_W\restriction N\) and \(i_{\kappa,\infty}^{\vec U^{V_W}}\circ j_{i(W)}^N\) agree on \(\kappa\), both being the identity there. 
    Since \(N = \hull^N(i[V]\cup \kappa)\),
    it follows that \(j_W\restriction N = j_{i(W)^*}^{N[G_{\vec {s}}]}\restriction N\) as claimed.

    Next, to show that \(j_{i(W)^*}^{N[G_{\vec {s}}]} = j_W\restriction N[G_{\vec s}]\),
    it now suffices to prove that these two embeddings agree on \(G_{\vec s}\). 
    By elementarity, \(j_W(G_{\vec s})\) is  equal to the generic associated to the sequence \(j_W(\vec s)\) of sets of indiscernibles coming
    from the complete iteration of \(V_W\) by \(j_W(\vec U)\). But \(j_{i(W)^*}(G_{\vec s})\) is equal to the same thing, by the ``moreover" clause of Proposition \ref{Prop: embedding of W*} and calculations similar to the previous paragraph.
\end{proof}
\begin{corollary}
    Suppose $\lambda\notin \Delta$ is a measurable cardinal and $W$ is a \(\lambda\)-complete ultrafilter on $\lambda$.  Then  $j_{i(W)^*}^{N[G_{\vec s}]}=j_W\restriction N[G_{\vec{s}}]$. 
\end{corollary}
\begin{proof}
    By the previous lemma, it suffices to consider the case that \(\lambda < \sup \Delta\). Also, the previous lemma implies that \(j_{\overline{i}(W)^*}^{\overline{N}[G_{\vec t}]} = j_W\restriction \overline{N}[G_{\vec t}]\) where \(\overline i : V\to \overline N\) is the complete iteration of \(\vec U\) restricted to measures below \(\lambda\) and \(\vec t\) is the associated sequence of sets of indiscernibles. 

    Note however that \(\overline i( W) = i(W)\), and by Lemma \ref{Lemma: Generic} applied in \(\overline N[G_{\vec t}]\), \(N[G_{\vec s}]\) is an inner model of \(\overline N[G_{\vec t}]\) that is closed under \(\lambda\)-sequences in \(\overline N[G_{\vec t}]\). It follows that 
    \[j_{i(W)^*}^{N[G_{\vec s}]} = j_{\overline i(W)^*}^{\overline N[G_{\vec t}]}\restriction N[G_{\vec s}] = j_W\restriction N[G_{\vec s}],\] which proves the corollary. 
\end{proof}


\subsection{Normal measures and the complete iteration}
In our classification of lifts of ultrafilters under the discrete Magidor product we will have to classify the possible extensions of the point-wise images of a normal ultrafilter under the complete iteration. The next lemmas will be used for that purpose. The tail filter on an ultrafilter $\rho$ is the filter generated by the sets $\{(\alpha,\rho)\mid \alpha<\rho\}$. 
\begin{lemma}\label{lemma: normal generation}
    If \(i : M\to N\) is an elementary embedding, \(U\in M\) is a normal ultrafilter on \(\delta\),
    and \(N = \hull^N(i[M]\cup i(\delta))\),
    then \(F \cup i[U]\) generates \(i(U)\) where \(F\) denotes the tail filter on \(i(\delta)\).
\end{lemma}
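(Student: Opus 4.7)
The plan is to show that every $A \in i(U)$ contains a set of the form $i(B) \cap (\beta, i(\delta))$ with $B \in U$ and $\beta < i(\delta)$. Once this is established, $A$ lies in the filter generated by $F \cup i[U]$, since $i(B) \in i[U]$ and $(\beta, i(\delta)) \in F$. The reverse inclusion, that the generated filter is contained in $i(U)$, is automatic because $i(U)$ is an $i(\delta)$-complete uniform ultrafilter on $i(\delta)$ in $N$.

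To find such $B$ and $\beta$, I first invoke the hull hypothesis to write $A = i(f)(\alpha_1, \ldots, \alpha_n)$ for some $f \in M$ and $\alpha_1 < \cdots < \alpha_n < i(\delta)$. Applying elementarity to the $M$-set $\{\vec{x} \in [\delta]^n : f(\vec{x}) \in U\}$, which contains $\vec\alpha$ because $A \in i(U)$, I may harmlessly assume $f : [\delta]^n \to U$. I then reduce to the case $n = 1$: using $\delta$-completeness of $U$, define $g : \delta \to U$ in $M$ by $g(x) = \bigcap_{\vec{y} \in [x]^{n-1}} f(\vec{y}, x)$, which is an intersection of fewer than $\delta$ members of $U$ and hence in $U$. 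By elementarity, $i(g)(\alpha_n) = \bigcap_{\vec{y} \in [\alpha_n]^{n-1}} i(f)(\vec{y}, \alpha_n) \subseteq A$, since $(\alpha_1, \ldots, \alpha_{n-1}) \in [\alpha_n]^{n-1}$. Thus it suffices to find $B$ and $\beta$ for $i(g)(\alpha_n)$.

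For the base case $n = 1$, i.e.\ for a set of the form $i(g)(\alpha)$ with $g : \delta \to U$ in $M$ and $\alpha < i(\delta)$, the argument splits. If $U$ is normal, take the diagonal intersection $h = \Delta_{x < \delta}\, g(x) \in U$; the standard computation gives $i(h) \cap (\alpha, i(\delta)) \subseteq i(g)(\alpha)$, so $(B,\beta) = (h,\alpha)$ works. If $U$ is only a $p$-point, choose a pseudo-intersection $B \in U$ of $\{g(x) : x < \delta\}$, so that $|B \setminus g(x)| < \delta$ for every $x < \delta$. Elementarity gives $|i(B) \setminus i(g)(\alpha)| < i(\delta)$ inside $N$, and since $i(\delta)$ is regular (indeed measurable) in $N$, the ordinal $\beta := \sup(i(B) \setminus i(g)(\alpha))$ is strictly less than $i(\delta)$, so $i(B) \cap (\beta, i(\delta)) \subseteq i(g)(\alpha)$.

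The only place I expect to have to exercise real care is the parameter-reduction step, which crucially depends on $\delta$-completeness of $U$ and on verifying via elementarity that we may assume $f$ lands in $U$ everywhere. The two base cases themselves are completely standard.
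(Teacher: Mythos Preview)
Your proof is correct and follows essentially the same route as the paper: represent $A \in i(U)$ via the hull hypothesis, arrange that the defining function lands in $U$, and take a diagonal intersection to produce the required $B\in U$ and tail set. The paper compresses your parameter-reduction step by writing $A = i(f)(\eta)$ for a \emph{single} $\eta < i(\delta)$ (tacitly coding finite tuples below $\delta$ as ordinals), and it omits the $p$-point case that you spell out.
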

\begin{proof}
    Let $X\in i(U)$. Since $N=\hull^N(i[M]\cup i(\delta))$, there is $f\in M$ and $\eta<i(\delta)$ such that $X=i(f)(\eta)$. Changing $f$ if needed, 
     we may assume that $f:\delta\rightarrow U$. Let $A^*=\Delta_{\alpha<\delta}f(\alpha)$, then by the normality assumption $A^*\in U$. It follows that $i(A^*)\setminus \eta+1\subseteq i(f)(\eta)=X$, and therefore $X$ is in the filter generated by $F\cup i[U]$.  
\end{proof}

\begin{lemma}\label{lemma: hull analysis}
    Suppose \(i : M\to N\) is the complete iteration of \(\vec U\), \(\delta = \min(\dom(\vec U))\),
    and \(\eta\leq i(\delta)\). Let
    \(X = \hull^N(i[M]\cup \eta)\), let \(k : \overline N\to N\) be the inverse of the transitive collapse of $X$, and let \(\overline i = k^{-1}\circ i\). Let \(n \leq \omega\) be least such that \(\eta \leq i_{0n}(\delta)\). Then \(\overline{i} = i_{0n}\) and \(k = i_{n\infty}\).
\end{lemma}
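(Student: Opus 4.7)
The plan is to identify the hull $X = \hull^N(i[M] \cup \eta)$ explicitly as $i_{n\infty}[N_n]$; the rest of the statement then follows by the uniqueness of the transitive collapse. The structural input from the previous proposition is that the first $\omega$ stages of the complete iteration apply successive images of the measure $U_\delta$, so $\delta_m = i_{0m}(\delta)$ for $m < \omega$, $i_{0\omega}(\delta) = \sup_m \delta_m$, and $\crit(i_{m,m+1}) = \delta_m$.

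The first step is to pin down $\crit(i_{n\infty})$. For $n < \omega$, the stage-$n$ ultrapower is by a measure with critical point $\delta_n = i_{0n}(\delta)$, so $\crit(i_{n\infty}) = i_{0n}(\delta) \geq \eta$. For $n = \omega$, the minimality of $n$ forces $\eta > i_{0m}(\delta)$ for every finite $m$; combined with $\eta \leq i(\delta) = i_{0\omega}(\delta) = \sup_m i_{0m}(\delta)$, this yields $\eta = i_{0\omega}(\delta)$. By the discreteness of $\Delta$, the next cardinal used at stage $\omega$ is some $\delta_\omega \in i_{0\omega}(\Delta)$ strictly above $i_{0\omega}(\delta)$, so $\crit(i_{\omega\infty}) = \delta_\omega > \eta$. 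Either way, every ordinal below $\eta$ is fixed by $i_{n\infty}$, hence lies in $\rng(i_{n\infty})$. Combined with $i[M] = i_{n\infty}(i_{0n}[M]) \subseteq i_{n\infty}[N_n]$, this gives the inclusion $X \subseteq i_{n\infty}[N_n]$.

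For the reverse inclusion I would use the standard representation of elements of an iterated ultrapower: every $y \in N_n$ has the form $i_{0n}(f)(\delta_{j_1}, \ldots, \delta_{j_k})$ with $f \in M$, $k < \omega$, and indices $j_1 < \cdots < j_k < n$. Applying $i_{n\infty}$ and using that each $\delta_{j_\ell}$ lies strictly below $\crit(i_{n\infty})$ and is therefore fixed, one gets
\[
i_{n\infty}(y) = i(f)(\delta_{j_1}, \ldots, \delta_{j_k}).
\]
Now $i(f) \in i[M]$, and each $\delta_{j_\ell} < \eta$ (directly from the minimality of $n$ in the finite case, and from $\delta_{j_\ell} < i_{0\omega}(\delta) = \eta$ when $n = \omega$). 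Hence $i_{n\infty}(y) \in \hull^N(i[M] \cup \eta) = X$, establishing $i_{n\infty}[N_n] \subseteq X$.

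Having shown $X = i_{n\infty}[N_n]$, the fact that $N_n$ is transitive and $i_{n\infty}$ is elementary implies that $i_{n\infty}$ is the inverse of the transitive collapse of $X$; thus $\bar N = N_n$ and $k = i_{n\infty}$. Then $\bar i = k^{-1}\circ i = i_{n\infty}^{-1}\circ (i_{n\infty}\circ i_{0n}) = i_{0n}$, which completes the proof. The only real obstacle I anticipate is the borderline case $n = \omega$, where one must simultaneously argue that $\eta$ is forced to equal $i_{0\omega}(\delta)$ and that $\crit(i_{\omega\infty})$ strictly exceeds $\eta$; both points depend essentially on the discreteness of $\Delta$.
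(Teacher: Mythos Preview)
Your proof is correct and follows essentially the same approach as the paper's: both identify $X$ with $i_{n\infty}[N_n]$ and conclude from the uniqueness of the transitive collapse. Your treatment is in fact more careful about the borderline case $n=\omega$, explicitly arguing that $\eta = i_{0\omega}(\delta)$ and that $\crit(i_{\omega\infty}) > \eta$, whereas the paper's proof handles this case somewhat implicitly.
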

\begin{proof}
    Let $M_n$ be the (transitive) $n^{\text{th}}$-iterate of $M$ in $i$, namely $i_{n,\infty}:M_n\rightarrow N$, and $i_{0,n}:M\rightarrow M_n$. We claim that $X=i_{n,\infty}[M_n]$, from which it follows via uniqueness that $i_{n,\infty}=k$ and therefore $\overline{i}=k^{-1}\circ i=i_{0,n}$. Indeed, any $x\in X$ has the form $i(f)(\xi)$ for some $\xi<\eta\leq i_{0,n}(\delta)$. Each such ordinal can be represented using $\delta,i_{0,1}(\delta),...,i_{0,n-1}(\delta)$ and therefore we may assume that $x=i(f)(\delta,i_{0,1}(\delta),...,i_{0,n-1}(\delta))$ since the critical point of $i_{n,\infty}$ is $i_{0,n}(\delta)$, we have that $$x=i_{n,\infty}(i_{0,n}(f)(\delta,i_{0,1}(\delta),...,i_{0,n-1}(\delta)))\in \rng(i_{n,\infty})$$
    The other inclusion is similar.
\end{proof}

\begin{lemma}\label{lemma: skies}
    Suppose \(i : M\to N\) is an elementary embedding \(\delta\) is a regular cardinal, and \(\delta \leq \eta < i(\delta)\) is such that \(\eta\in i(C)\) for every closed unbounded set \(C\subseteq \delta\) in $M$. Let
    \(X = \hull^N(i[M]\cup \eta)\), let \(k : \overline N\to N\) be the inverse of the transitive collapse, and let \(\overline i = k^{-1}\circ i\). Then \(\overline i(\delta) = \eta\). 
\end{lemma}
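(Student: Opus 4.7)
The plan is to unpack what $\bar i(\delta) = k^{-1}(i(\delta))$ means in terms of the transitive collapse of $X$. Let $\pi : X \to \bar N$ denote the Mostowski collapse, so that $k = \pi^{-1}$ and $\bar i(\delta) = \pi(i(\delta))$. Since $\pi$ is order-preserving on ordinals, $\pi(i(\delta))$ equals the order type of $X \cap i(\delta)$. The lemma will therefore follow once I establish the identity $X \cap i(\delta) = \eta$ as a set of ordinals.

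The forward containment $\eta \subseteq X \cap i(\delta)$ is immediate, since every ordinal below $\eta$ is thrown in as a generator of the hull $X$, and $\eta < i(\delta)$. For the reverse containment, take an arbitrary $\beta \in X \cap i(\delta)$. By the definition of a Skolem hull, $\beta$ is definable in $N$ from finitely many elements of $i[M]$ together with finitely many ordinals below $\eta$. Absorbing the $i[M]$-parameters into a function coming from $M$ via elementarity, and truncating its range to $\delta$ (which is harmless because $\beta < i(\delta)$), I can write $\beta = i(g)(\vec\xi)$ for some $g : \delta^n \to \delta$ in $M$ and some tuple $\vec\xi \in \eta^n$.

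Now I would invoke the club hypothesis. In $M$, consider the set $C = \{\alpha < \delta : g[\alpha^n] \subseteq \alpha\}$. This set is closed in $\delta$ by a standard continuity argument, and it is unbounded in $\delta$ provided $\delta$ is regular in $M$, which I take as part of the intended setup (in the applications of the lemma, $i$ is an iterated ultrapower-like embedding, and $\delta$ remains regular in $M$). Thus $C$ is a club of $\delta$ in $M$, and by hypothesis $\eta \in i(C)$, which unpacks to $i(g)[\eta^n] \subseteq \eta$. Specializing to the particular tuple $\vec\xi \in \eta^n$ yields $\beta = i(g)(\vec\xi) < \eta$, completing the reverse containment.

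The main subtle point is verifying that the chosen $C$ actually lies among the clubs of $M$ to which the hypothesis applies; this is precisely where the regularity of $\delta$ in $M$ enters, and is the place where one must be careful if the lemma is to be applied in generality. Everything else is a straightforward unpacking of the transitive collapse and the definition of the hull.
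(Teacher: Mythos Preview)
Your proof is correct and follows essentially the same approach as the paper: both reduce to showing $X\cap i(\delta)\subseteq \eta$ by writing an arbitrary element as $i(g)$ applied to parameters below $\eta$ and then applying the hypothesis to the club of closure points of $g$. The paper is slightly terser, using a single ordinal parameter $\xi<\eta$ rather than a tuple, but the argument is the same; your remark that regularity of $\delta$ must be understood in $M$ is well taken and is implicit in the paper's proof as well.
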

\begin{proof}
    Since $k^{-1}$ is just the transitive collapse, it suffices to prove that there are no ordinals $\alpha\in X$ between $\eta$ and $i(\delta)$. Let $\alpha\in X$ be below $i(\delta)$. Then by $\alpha=i(f)(\xi)$ for some $\xi<\eta$. We may assume that $f:\delta\rightarrow\delta$. Let $C_f\subseteq \delta$ be the club of closure points of $f$. Then the assumption of the Lemma, $\eta\in i(C_f)$, namely, $\eta$ is a closure point of $i(f)$. Since $\xi<\eta$, $\alpha=i(f)(\xi)<\eta$. Hence no ordinal in $X$ is between $\eta$ and $i(\delta)$.
\end{proof}
A filter on $\rho$ extending the tail filter on $\rho$ is called \textit{fine}.
\begin{lemma}\label{lemma: tilde U}
    Suppose \(M\) is a transitive model of set theory, \(i : M \to N\) is the complete iteration of \(\vec U\) in \(M\), \(U\) is a normal $M$-ultrafilter on $\delta$, where $\delta$ is the minimal element in $\dom(\vec{U})$, and \(\tilde{U}\) is a fine \(N\)-ultrafilter on an ordinal \(\eta\leq i(\delta)\) extending \(\{i(A)\cap \eta : A\in U\}\). 
    Then one of the following holds:
    \begin{itemize}
        \item \(\{i_{0n}(\delta)\}\in \tilde{U}\) for some \(n < \omega\),
        \item \(\tilde{U} = i_{0n}(U)\) for some \(n \leq \omega\).
    \end{itemize}
    
\end{lemma}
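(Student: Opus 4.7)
The plan is to pull $\tilde U$ back to $M_n$ using the factor map supplied by Lemma \ref{lemma: hull analysis} and then to compare the pullback with $i_{0n}(U)$ using Lemma \ref{lemma: normal generation}. First, let $n\leq\omega$ be least with $\eta\leq i_{0n}(\delta)$; applying Lemma \ref{lemma: hull analysis} to $X=\hull^N(i[M]\cup\eta)$ identifies the inverse of the transitive collapse with $k=i_{n,\infty}:M_n\to N$, satisfying $k\circ i_{0n}=i$ and $\crit(k)=i_{0n}(\delta)$. I will then form the $M_n$-ultrafilter derived from $j_{\tilde U}\circ k$ at $[id]_{\tilde U}$,
\[
\bar U=\{A\in P(i_{0n}(\delta))\cap M_n : k(A)\cap\eta\in\tilde U\},
\]
and observe that $\bar U\supseteq i_{0n}[U]$, since $k(i_{0n}(A))\cap\eta=i(A)\cap\eta\in\tilde U$ for every $A\in U$.

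The next step is to note that $M_n=\hull^{M_n}(i_{0n}[M]\cup i_{0n}(\delta))$, because the generators $\delta,i_{01}(\delta),\ldots,i_{0,n-1}(\delta)$ of the finite iteration all lie below $i_{0n}(\delta)$; by Lemma \ref{lemma: normal generation}, $i_{0n}(U)$ is then generated in $M_n$ by $i_{0n}[U]$ together with the tail filter on $i_{0n}(\delta)$. Separately, an inductive application of the fact that in an ultrapower by a normal ultrafilter $\delta$ is the unique representative of itself yields $\bigcap i_{0n}[U]=\{i_{0k}(\delta):k<n\}$, so any $M_n$-ultrafilter extending $i_{0n}[U]$ is either principal on one of these iteration points or uniform on $i_{0n}(\delta)$.

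Splitting into cases: using fineness of $\tilde U$ together with the fact that $k$ is the identity on ordinals below $i_{0n}(\delta)$, if $\bar U$ contains every tail of $i_{0n}(\delta)$ then $\bar U=i_{0n}(U)$ by the generation lemma; unwinding through $k$ (which is the identity on $P(\eta)\cap M_n$ when $\eta\leq i_{0n}(\delta)$) then yields $\tilde U=i_{0n}(U)$. Otherwise $\bar U$ is principal and, since it extends $i_{0n}[U]$, must concentrate on some $\{i_{0k}(\delta)\}$ with $k<n$; because $k$ fixes this ordinal, $\{i_{0k}(\delta)\}\in\tilde U$, giving the first conclusion.

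The main obstacle I anticipate is the case $\eta<i_{0n}(\delta)$ strictly: there $\bar U$ lives on $i_{0n}(\delta)$ but concentrates on the bounded set $\eta$, and one must use fineness to rule out uniform non-principal ultrafilters on $\eta$ extending $i_{0n}[U]\restriction P(\eta)$. The key point will be that such an extension would contradict the uniformity of $i_{0n}(U)$ on $i_{0n}(\delta)$, so the uniform branch collapses and we fall into the principal case on an iteration point below $\eta$.
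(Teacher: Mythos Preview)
Your overall strategy---pull $\tilde U$ back along $k=i_{n\infty}$ and compare with $i_{0n}(U)$ via Lemma~\ref{lemma: normal generation}---is the paper's strategy, and your treatment of the case $\eta=i_{0n}(\delta)$ is correct. There are, however, two genuine problems.

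First, the identity $\bigcap i_{0n}[U]=\{i_{0k}(\delta):k<n\}$ is false in general. The lemma allows $U$ to be any normal $M$-ultrafilter on $\delta$, not necessarily $\vec U_\delta$; the seeds $i_{0k}(\delta)$ all derive $\vec U_\delta$ from $i_{0n}$ (the only normal ultrafilter derivable from an iterate of $\vec U_\delta$ at a seed below $i_{0n}(\delta)$ is $\vec U_\delta$ itself), so when $U\neq\vec U_\delta$ the intersection is empty. This case really occurs in the application in Theorem~\ref{Thm: Classification}, namely whenever $m\in d\setminus d'$.

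Second, and more seriously, your dichotomy ``principal on an iteration point or uniform on $i_{0n}(\delta)$'' does not follow, even when $U=\vec U_\delta$. If $\eta$ is a limit ordinal with $i_{0,n-1}(\delta)<\eta<i_{0n}(\delta)$, then $\bar U$ is nonprincipal (being fine on $\eta$) yet not uniform on $i_{0n}(\delta)$ (it contains the bounded set $\eta$). Your proposed fix---that such a $\bar U$ ``contradicts the uniformity of $i_{0n}(U)$''---does not work: $\bar U$ and $i_{0n}(U)$ are simply two different ultrafilters extending the filter generated by $i_{0n}[U]$, and the uniformity of one says nothing about the other. What is actually needed here is Lemma~\ref{lemma: skies}: fineness of $\tilde U$ forces $i(C)\cap\eta$ to be unbounded in $\eta$ for every club $C\subseteq\delta$ in $M$, so a limit $\eta<i(\delta)$ lies in $i(C)$ for every such $C$; hence $\eta=\bar i(\delta)$ for the hull embedding $\bar i$, which Lemma~\ref{lemma: hull analysis} identifies as $i_{0m}$ for some $m$. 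This is precisely the step that rules out limit $\eta$ strictly between iteration points, and your argument is missing it.
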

    \begin{proof}
        If \(\eta = \nu + 1\) is a successor ordinal, then since \(\tilde{U}\) is fine, \(\{\nu\}\in \tilde{U}\). In that case, for every $A\in U$, $\nu\in i(A)$. Note that since the critical point of $i_{\omega,\infty}$ is greater than $i_{0,\omega}(\delta)$, $i_{0,\omega}(A)=i(A)$. Since $i_{0,\omega}$ is the $\omega^{\text{th}}$ iterate of the normal measure $U_\delta$, the only seeds for a normal ultrafilter in $i_{0,\omega}$ are the $i_{0,n}(\delta)$'s\footnote{If $W$ is a normal ultrafilter derived from $i_{0,\omega}$ using $\alpha$, then $W$ is derived from $i_{0,n}$ and $\alpha'$ for some $n<\omega$ and some $\alpha'$ such that $i_{n+1,\omega}(\alpha')=\alpha$. Therefore $W\leq_{RK} U_\delta^n$. But then $W\equiv_{RK} U^m$ for some $m\leq n$ (see for example \cite[Lemma 2.4]{TomMoti}). Since $W$ is normal, $m=1$ and $W=U$, and again by normality, the only RK-projections of $U^n$ onto $U$ are given by the coordinate projections. Equivalently, the only seeds $\alpha$ for $U$ from $j_{U^n}$ are $\{i_{0,m}(\delta)\mid m<n\}$.}, hence \(\nu = i_{0n}(\delta)\) for some \(n < \omega\). 
        
        Now assume that \(\eta\) is a limit ordinal.
        Since \(\tilde{U}\) is fine, 
        every set in \(\tilde{U}\) is unbounded in \(\eta\). 
        It follows that for every closed unbounded set \(C\subseteq \delta\) in \(M\), \(i(C)\cap \eta\) is unbounded in \(\eta\), and therefore if \(\eta < i(\delta)\), then \(\eta \in i(C)\). 
        Let \(X = \text{Hull}^N(i[M]\cup \eta)\),
        let \(k : \overline N\to N\) be the inverse of the transitive collapse of \(X\), and let
        \(\overline i = k^{-1}\circ i\).
                
        Note that \(\overline i : M\to \overline N\), \(\overline N = \text{Hull}^{\overline N}(\overline i[M]\cup \eta)\), and \(\overline i(\delta) = \eta\) by Lemma \ref{lemma: skies}\footnote{Although in Lemma \ref{lemma: skies} we are assuming that $\eta<i(\delta)$, the conclusion $\eta=\overline{i}(\delta)$ is true also when $\eta=i(\delta)$. To see this, note that in this case $i(\delta)+1\subseteq X$ and therefore  $\overline{i}(\delta)=k^{-1}(i(\delta))=i(\delta)=\eta$.}.
        Lemma \ref{lemma: normal generation} 
        implies that \(F\cup \overline{i}[U]\) generates \(\overline{i}(U)\), where \(F\) denotes the tail filter on \(\eta\). 
        Since \(\tilde{U}\) is fine,
        \(F\subseteq \tilde{U}\), and 
        \(\overline i[U]=\{i(A)\cap \eta : A\in U\}\subseteq \tilde{U}\).
        Since \(F\cup \overline{i}[U]\) generates \(\overline i(U)\), it follows that \(\overline i(U)\subseteq \tilde{U}\). 

        By Lemma \ref{lemma: hull analysis}, there is some \(n \leq \omega\) such that \(\overline i = i_{0n}\) and \(k = i_{n\infty}\).
        Since \(k = i_{n\infty}\) and \(\eta \leq \crit(k)\), we have \(P(\eta)\cap \overline N = P(\eta)\cap N\), and therefore using the maximality of ultrafilters and the fact that \(\overline i(U)\subseteq \tilde{U}\), we obtain \(\overline i(U) =  \tilde{U}\). Since \(\overline i = i_{0n}\), we have
        \(\tilde{U} = \overline i(U) = i_{0n}(U)\), which proves the lemma.
        \end{proof}

\section{Some ultrafilters in the discrete Magidor extension}\label{Section: Some ultrafilters}
Throughout this section, we fix a measurable cardinal \(\kappa\), a discrete set of measurable cardinals \(\Delta\subseteq\kappa\) such that $\sup(\Delta)=\kappa$, a
sequence \(\vec U = \langle U_\delta : \delta\in \Delta\rangle\) of normal ultrafilters \(U_\delta\) on \(\delta\), and a \(V\)-generic filter \(G\) on the discrete Magidor product \(\mathbb P_\kappa\) associated with \(\vec U\).
This section is devoted to showing that  Kaplan's theorem (Corollary \ref{Cor: Kaplan's Theorem}) does not generalize to arbitrary \(\kappa\)-complete ultrafilters on \(\kappa\).

\subsection{An ultrafilter with infinitely many extensions}\label{subSection: two normals}
In this subsection, we exhibit a \(\kappa\)-complete ultrafilter \(W\) on \(\kappa\times \kappa\) that, in \(V[G]\), has infinitely many distinct extensions to \(\kappa\)-complete ultrafilters.

Before we do this, let us make some general comments on where ultrafilters in \(V[G]\) come from. Let $\tilde{W}$ be a $V[G]$-ultrafilter. Then 
\(\tilde{W}\) is uniquely determined by the following three ingredients:
\begin{itemize}
\item \(j_{\tilde{W}}^{V[G]}\restriction V\), the \textit{restricted ultrapower embedding}
\item \(j_{\tilde{W}}(G)\), the \textit{image generic}
\item \([\text{id}]_{\tilde{W}}\), the \textit{seed}
\end{itemize}
The first two ingredients determine $j_{\tilde{W}}^{V[G]}$ and given that $[\text{id}]_{\tilde{W}}=a$, we can recover $\tilde{W}$ as the ultrafilter derived from $j_{\tilde{W}}^{V[G]}$  and $a$. 

Note that an arbitrary list of ingredients need not form a recipe for cooking up a genuine $V[G]$-ultrafilter. Suppose one is given an elementary embedding \(j : V\to M\),
an \(M\)-generic filter \(H\) on \(j(\mathbb P_\kappa)\) and a point \(a\in M\).
When is there a $V[G]$-ultrafilter \(\tilde W\) whose restricted ultrapower is \(j\), image generic is \(H\), and seed \(a\)? It is easy to see this is the case if and only if the following hold:
\begin{itemize}
    \item \(j[G]\subseteq H\).
    \item \(M\subseteq \hull^{M[H]}(j[V]\cup \{H,a\})\).
\end{itemize}
We use here Corollary \ref{cor: hulls and ultrapowers} which sets up an equivalent condition for an embedding $k:V[G]\rightarrow M$ being an ultrapower embedding by some $V[G]$-ultrafilter $\tilde{W}$.

Let us turn to the description of the ultrafilter $W$ with infinitely many extensions to $V[G]$. We start with any normal ultrafilter \(D\) on \(\kappa\).
For each \(\alpha < \kappa\), let \[\delta(\alpha) = \min (\Delta\setminus \alpha)\]
and let \[W = \sum_{D} U_{\delta(\alpha)}\] 
In other words, a set \(A\subseteq \kappa\times \kappa\) belongs to \(W\) if and only if for \(D\)-almost all \(\alpha\), for \(U_{\delta(\alpha)}\)-almost all \(\beta\), \((\alpha, \beta)\in A\).
The well-known general analysis of sums of ultrafilters yields the following lemma (see for example \cite[3.5.7]{GoldbergUA}):
\begin{lemma}
    Let \(\delta = \min (j_D(\Delta)\setminus \kappa)\) and let \(U = j_D(\vec U)_\delta\). Then \(V_W = (V_D)_U\), \(j_W = j_U\circ j_D\), and \([\textnormal{id}]_W = (\kappa, \delta)\).
\end{lemma}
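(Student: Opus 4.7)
The plan is to apply Corollary \ref{cor: hulls and ultrapowers} to the composed embedding $j := j_U \circ j_D : V \to (V_D)_U$ with candidate seed $a := (\kappa, \delta)$. Note that $a \in j(\kappa \times \kappa)$, since $\kappa < j_D(\kappa) = j(\kappa)$ and, because $\Delta$ is discrete at $\kappa$, we have $\kappa \notin j_D(\Delta)$ but $\delta \in j_D(\Delta) \subseteq j_D(\kappa)$. In particular, $\crit(j_U) = \delta > \kappa$, so $j_U(\kappa) = \kappa$. If I can verify (i) that $W$ is the $V$-ultrafilter on $\kappa \times \kappa$ derived from $j$ and $a$, and (ii) that $(V_D)_U = \hull^{(V_D)_U}(\rng(j) \cup \{a\})$, then Corollary \ref{cor: hulls and ultrapowers} delivers $V_W = (V_D)_U$, $j_W = j_U \circ j_D$, and $[\mathrm{id}]_W = (\kappa, \delta)$ in one stroke.

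The computational heart of (i) is the identification, inside $V_D$, of $U$ with $[\alpha \mapsto U_{\delta(\alpha)}]_D$. Since $[\mathrm{id}]_D = \kappa$ and $\delta(\alpha) = \min(\Delta \setminus \alpha)$, elementarity of $j_D$ gives $j_D(\delta(\cdot))(\kappa) = \min(j_D(\Delta) \setminus \kappa) = \delta$, whence $j_D(\alpha \mapsto U_{\delta(\alpha)})(\kappa) = j_D(\vec U)_\delta = U$. With this in hand, I would unfold $a \in j(A)$ for an arbitrary $A \subseteq \kappa \times \kappa$ in $V$. Write $A_\alpha := \{\beta : (\alpha, \beta) \in A\}$. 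Using $j_U(\kappa) = \kappa$, the assertion $(\kappa, \delta) \in j_U(j_D(A))$ is equivalent to $\delta \in j_U(j_D(A)_\kappa)$, where the slice at $\kappa$ is $j_D(A)_\kappa = [\alpha \mapsto A_\alpha]_D$ by the standard Łoś calculation for $D$. Since $U$ is normal on $\delta$ in $V_D$, we have $\delta \in j_U(X)$ iff $X \in U$, so the previous assertion is equivalent to $[\alpha \mapsto A_\alpha]_D \in U$; by the identification of $U$, this in turn translates to $\{\alpha : A_\alpha \in U_{\delta(\alpha)}\} \in D$, which is exactly $A \in W$.

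For (ii), every $x \in (V_D)_U$ has the form $j_U(g)(\delta)$ for some $g \in V_D$, and every $g \in V_D$ has the form $j_D(h)(\kappa)$ for some $h \in V$. Packaging $h$ into a two-variable function $h'(\alpha, \beta) := h(\alpha)(\beta)$ (with a default value when $h(\alpha)$ is not a function), one checks $j(h')(\kappa, \delta) = j_U(j_D(h'))(\kappa, \delta) = j_U(j_D(h)(\kappa))(\delta) = j_U(g)(\delta) = x$, so $(V_D)_U$ is generated over $j[V]$ by $a$. The main (quite mild) obstacle is simply bookkeeping around the elementarity step $U = j_D(\vec U)_\delta = [\alpha \mapsto U_{\delta(\alpha)}]_D$ and the verification that $\kappa < \crit(j_U)$, both of which reduce to the discreteness of $\Delta$ at $\kappa$. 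Once these are set up, Corollary \ref{cor: hulls and ultrapowers} delivers all three conclusions simultaneously.
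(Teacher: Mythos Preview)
Your argument is correct and is precisely the ``general analysis of sums of ultrafilters'' that the paper invokes without proof; you have simply unpacked that analysis explicitly via Corollary~\ref{cor: hulls and ultrapowers}. One cosmetic point: the equality $j_D(\kappa) = j(\kappa)$ you assert is true (since $j_D(\kappa)$ is inaccessible in $V_D$ above $2^\delta$), but your stated justification only yields $j_U(\kappa)=\kappa$; in any case you only need $\kappa,\delta < j(\kappa)$, which follows from $j(\kappa)\geq j_D(\kappa)>\delta>\kappa$.
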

A $V[G]$-ultrafilter $\tilde{W}$ given by the three ingredients extends $W$ if there is an elementary embedding $k:V_W\rightarrow j_{\tilde{W}}(V)$ such that the restricted ultrapower $j^{V[G]}_{\tilde{W}}\restriction V$ is equal to $k\circ j_W$ and $k$ maps $[id]_W$ to $[id]_{\tilde{W}}$.

The first lift is the canonical extension $W^*$ (see Definition \ref{Definition: Canonical extension}), whose three ingredients are $i^W\circ j_W, \ G\times G_{\vec{s}^{\hspace{0.04 cm}W}}$ and $(\kappa,\delta)$.  Here \[i^W : V_W\to N\] is the complete iteration of \(V_W\) by \(j_W(\vec U)\restriction (\kappa,j_W(\kappa))\). (Note that the critical point of $i^W$ is above $\delta$.)

To define a non-canonical lift of $W$, we will absorb $j_U$ into the complete iteration $i^D$. Formally, let \(i^D : V_D\to N'\) be the complete iteration of \(V_D\) via
\(j_D(\vec U)\restriction (\kappa,j_D(\kappa))\).
Since \(U\) is the first ultrafilter used in the complete iteration of \(j_D(\vec U)\restriction (\kappa,j_D(\kappa))\), it is easy to see that \(N' = N\) and \[i^D=i^W\circ j_U.\]

Let $\Delta^N=i^D(\Delta)$, and
let \(\vec s^{\hspace{0.07 cm}D} = \langle s^D_\alpha : \alpha \in \Delta^N\setminus \kappa\rangle\) be the associated sets of indiscernibles.
By Proposition \ref{Prop: embedding of W*}, letting $D^*$ be the canonical extension of $D$, \(j_{D^*}^{V[G]} \restriction V = i^D\circ j_D\) and
\(j_{D^*}^{V[G]}(G) = G \times G_{\vec s^{\hspace{0.04 cm}D}}\).

Observe that \[i^W\circ j_W = i^W\circ j_U\circ j_D = i^D\circ j_D\]
Similarly, letting \(\vec s^{\hspace{0.07 cm} W} = \langle s^W_\alpha : \alpha\in\Delta^N\setminus \kappa\rangle\) denote the associated sequence of sets of indiscernibles and \(\delta^* = i^D(\delta)\), we have 
\[s^D_{\alpha} = \begin{cases}
s^W_\alpha &\text{if }\alpha\neq \delta^*\\
s^W_{\delta^*}\cup \{\delta\}&\text{if }\alpha = \delta^*\end{cases}\]
It follows that \(j_{D^*}\neq j_{W^*}\) since \[j_{W^*}(G) = G \times G_{\vec s^{\hspace{0.04 cm}W}} \neq G\times G_{\vec s^{\hspace{0.04 cm}D}} = j_{D^*}(G)\]
yet \[V[G]_{W^*} =N[G \times G_{\vec s^{\hspace{0.04 cm}W}}]=N[G \times G_{\vec s^{\hspace{0.04 cm}D}}]=V[G]_{D*}.\] 

Now let \(W'\) be the $V[G]$-ultrafilter on \(\kappa\times \kappa\) derived from \(j_{D^*}\) using \((\kappa,\delta)\). Then by Corollary \ref{cor: hulls and ultrapowers} \(j_{W'} = j_{D^*}\). To see that $W'$ lifts $W$, we note that $j_{D^*}\restriction V=i^D\circ j_W=i^W\circ j_W$ and again $i^W(\kappa,\delta)=(\kappa,\delta)$. So the three ingredients of $W'$ are $i^W\circ j_W, G\times G_{\vec s^{\hspace{0.04 cm}D}}$, and $(\kappa,\delta)$. As we noted above, $j_{D^*}(G)\neq j_{W^*}(G)$ and it follows that \(W' \neq W^*\).

Thus we have constructed two distinct \(V[G]\)-ultrafilters extending \(W\): the canonical one, and another that is Rudin-Keisler equivalent to \(D^*\).
Are these the only extensions? As the title of this subsection suggests, the answer is no and there are infinitely many more, falling into two countably infinite families: the first generalizing \(W'\) and the second generalizing \(W^*\). 
We begin with the generalizations of \(W'\), which are a bit easier to describe as they are all Rudin-Keisler equivalent to \(D^*\).
Let \(\langle \delta_n \rangle_{n < \omega}\) be the increasing enumeration of \(s^D_{\delta^*}\), the  set of indiscernibles associated with \(\delta^*\) in the complete iteration of \(V_D\).

 \begin{definition}
Let \(W^1_n\) be the $V[G]$-ultrafilter derived from
\(j_{D^*}^{V[G]}\) using \((\kappa,\delta_n)\).
\end{definition}

The reason this forms an extension of $W$ is that we can represent $i^W\circ j_W$ differently. It is a well-known fact that
$j_{U^{n+1}}=j_{j_{U^n}(U)}\circ j_{U^n}=(j_{U^n}\restriction V_U)\circ j_U$. Therefore \begin{align*}      
i^W\circ j_W&=i^D\circ j_D\\
&= i_{n+1,\theta}^D\circ j_{U^{n+1}}\circ j_D\\
&=i_{n+1,\theta}^D\circ (j_{U^n}\restriction V_U)\circ j_U\circ j_D\\
&=i_{n+1,\theta}^D\circ (j_{U^n}\restriction V_U)\circ j_W
\end{align*}

Once again it is not hard to see that Corollary \ref{cor: hulls and ultrapowers} can be applied here to conclude that for every $n<\omega$, $j^{V[G]}_{W^1_n}=j^{V[G]}_{D^*}$ and \([\text{id}]_{W^1_n} = (\kappa,\delta_n)\).
In other words, the three ingredients that determine $W^1_n$ are $i^D_{n+1,\theta}\circ j_{U^n}\restriction M_U\circ j_W$, $G\times G_{\vec{s}^{\hspace{0.03 cm}D}}$, and $(\kappa,\delta_n)$. 
Note that  $i^D_{n+1,\theta}(j_{U^n}(\kappa,\delta))=(\kappa,j_{U^n}(\delta))=(\kappa,\delta_n)$ and therefore $W^1_n$ lifts $W$. All the $W^1_n$'s are distinct as they are derived from the same normal ultrapower embedding (i.e., they are Rudin-Keisler equivalent) using different seeds.

We now turn to the second family of extensions of $W$: the generalizations of \(W^*\), which will be denoted by \(W^0_n\).
We specify the ultrafilter \(W^0_n\) by listing the three ingredients first.
As in the case of \(W^1_n\), the restricted ultrapower embedding associated with \(W^0_n\) is \(i^D\circ j_D\) and the seed is \((\kappa,\delta_n)\). The difference is in the image generic: we will have \(j_{W^0_n}(G) \neq G\times G_{\vec s^{\hspace{ 0.04 cm}D}}\). Instead,
\[j_{W^0_n}(G) = G\times G_{\vec s^{\hspace{.04cm}n}}\]
where \(\vec s^{\hspace{.07cm}n}\) is the sequence of sets of indiscernibles obtained from 
\(\vec s^{\hspace{.07cm}D}\) by removing the ordinal \(\delta_n\) from \(s^D_{\delta^*}\); that is, \(s^{n}_{\delta^*} = s^D_{\delta^*}\setminus \{\delta_n\}\) and for \(\alpha> \delta^*\),
\(s^n_\alpha = s^D_\alpha\).

It is not entirely obvious that there is an extension of $W$ to a \(V[G]\)-ultrafilter that has this restricted ultrapower embedding, image generic, and seed. To show that \(W^0_n\) exists, we need to prove that there is an elementary embedding \(\ell : V[G]\to N[G\times G_{\vec s^{\hspace{.04cm}n}}]\) extending
\(i^W\circ j_W\) such that \(\ell(G) = G\times G_{\vec s^{\hspace{.04cm}n}}\)
and \begin{equation} \label{Equation: hull}
        N[G\times G_{\vec s^{\hspace{.04cm}n}}] = \text{Hull}^{N[G\times G_{\vec s^{\hspace{.03cm}n}}]}(\ell\big[V[G]\big]\cup \{(\kappa,\delta_n)\})
\end{equation}
Then by Corollary \ref{cor: hulls and ultrapowers}, $\ell=j_{W^0_n}$ where \(W^0_n\) is the \(V[G]\)-ultrafilter on \(\kappa\times \kappa\) derived from \(\ell\) using \((\kappa,\delta_n)\).

To show that \(\ell\) exists, let \(U_n\) denote the \(n\)-th iterate \(j_{U^n}(U)\) of \(U\), and note that \(U_n\) is an \(N[G_{\vec s^{\hspace{.04cm}D}}]\)-ultrafilter on \(\delta_n\), although $U_n$ is not an element of \(N[G_{\vec s^{\hspace{.04cm}D}}]\). By L\'{e}vy-Solovay, $U_n$ generates an \(N[G\times G_{\vec s^{\hspace{.04cm}D}}]\)-ultrafilter \(U_n^*\)  and $j_{U_n^*}\restriction N[G_{\vec s^{\hspace{.04cm}D}}]=j_{U_n}^{N[G_{\vec s^{\hspace{.04cm}D}}]}$. 
We will set \begin{equation}\label{equation: ell}\ell = j_{U_n^*}\circ j_{D^*}\end{equation}

By Proposition \ref{Proposition: indiscernible after ultrapower} applied in $V_D$, \(j_{U_n}^{N[G_{\vec s^{\hspace{.04cm}D}}]}\circ i^D = i^D\),
and therefore \[\ell\restriction V=j_{U_n^*}\circ j_{D^*}\restriction V=(j_{U^*_n}\restriction N[G_{\vec s^{\hspace{.04cm}D}}])\circ (i^D\circ j_D)=i^D\circ j_D.\] Namely $\ell$ extends 
\(i^D\circ j_D\); moreover, Proposition \ref{Proposition: indiscernible after ultrapower} ensures that \(j_{U_n}(G_{\vec s}) = G_{\vec s^{\hspace{.04cm}n}}\),
and combiniting with Proposition \ref{Prop: embedding of W*} we have that \[\ell(G) = j_{U^*_n}(j_{D^*}(G))=j_{U^*_n}(G\times G_{\vec s})= G\times G_{\vec s^{\hspace{.04cm}n}}.\] 
This verifies that \(\ell\) has the correct restricted embedding and image generic.

Finally, we verify (\ref{Equation: hull}). Let \(H = \text{Hull}^{N[G\times G_{\vec s}]}(\ell\big[V[G]\big]\cup \{\kappa,\delta_n\})\).
Since \(N[G\times G_{\vec s}] = V[G]_{D^*}\), we have
\[ N[G\times G_{\vec s}] = \text{Hull}^{N[G\times G_{\vec s}]}(j_{D^*}\big[V[G]\big]\cup \{\kappa\})\]
To show \(N[G\times G_{\vec{s}}] = H\), it therefore suffices to show that
\(j_{D^*}\big[V[G]\big]\) is contained in \(H\).
For this, since \(j_{D^*}\restriction V = \ell\restriction V\), it is enough to show that \(j_{D^*}(G)\in H\). But 
\(j_{D^*}(G) = G\times G_{\vec s}\) is definable from \(\ell(G) = G\times G_{\vec s^{\hspace{.04cm}n}}\) and \(\delta_n\), as , roughly speaking,
\(G_{\vec s}= G_{\vec s^{\hspace{.04cm}n}}\cup \{\delta_n\}\).
Since \(j_{D^*}(G)\) is definable from parameters in \(H\), it belongs to \(H\).
\begin{remark}
    Note that we defined the extension \(W^0_n\) essentially by removing a single ordinal from \(\vec s^{\hspace{.04cm}D}\), to obtain  \(\vec{s}^{\hspace{.04cm}n}\).
    One might be tempted to define other similar extensions of \(W\), instead using sequences \(\vec{t}\) obtained by removing more elements of \(\vec{s}^{\hspace{.04cm}n}\). But in fact, using such a sequence \(\vec{t}\) in our specification of the three ingredients that constitute a lift of \(W\) would be fallacious, because these ingredients do not correspond to any ultrafilter in \(V[G]\). The reason is that removing any element of \(\vec{s}^{\hspace{.04cm}D}\) other than \(\delta_n\) makes Equation (\ref{Equation: hull}) above \textit{false.} 
\end{remark}
We note the following theorem, which follows from the foregoing analysis of \(W^0_1\):
\begin{theorem}\label{theorem: non-rigid ultrapower}
    In \(V[G]\), for any normal ultrafilter \(F\) on \(\kappa\),
    there is a nontrivial elementary embedding from \(V[G]_{F}\) to itself.
    \begin{proof}
        By Kaplan's theorem (Corollary \ref{Cor: Kaplan's Theorem}), \(F = D^*\) for some normal ultrafilter \(D\) of \(V\).
        As above, let \(U = j_D(\vec U)_{\delta^*}\) and let \(U^*\) be the \(V[G]_{D^*}\)-ultrafilter generated by \(U\). Then \((V[G]_{D^*})_{U^*}=V[G]_{D^*}\), and so
        \(j_U : V[G]_{D^*}\to V[G]_{D^*}\) is a nontrivial elementary embedding.
    \end{proof}
\end{theorem}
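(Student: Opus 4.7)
The plan is to combine Kaplan's theorem with the absorption phenomenon encoded in Proposition \ref{Proposition: indiscernible after ultrapower}. By Corollary \ref{Cor: Kaplan's Theorem}, every normal ultrafilter \(F\) on \(\kappa\) in \(V[G]\) is the canonical extension \(D^*\) of some normal ultrafilter \(D\in V\), and moreover \(V[G]_{D^*} = N[G\times G_{\vec s^{\hspace{0.04cm}D}}]\), where \(i^D:V_D\to N\) is the complete iteration of \(V_D\) via \(j_D(\vec U)\restriction(\kappa,j_D(\kappa))\) and \(\vec s^{\hspace{0.04cm}D}\) is the associated sequence of sets of indiscernibles. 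Thus, without loss of generality, \(F = D^*\).

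Set \(\delta^* = \min(j_D(\Delta)\setminus \kappa)\) and \(U = j_D(\vec U)_{\delta^*}\), the first normal measure applied in the complete iteration \(i^D\). The idea is to extend \(U\) to a \(V[G]_{D^*}\)-ultrafilter \(U^*\) on \(\delta^*\) and show that the corresponding ultrapower embedding \(j_{U^*}\) sends \(V[G]_{D^*}\) back into itself. Since \(\crit(j_{U^*}) = \delta^* > \kappa\), such an embedding is automatically nontrivial, which is what the theorem requires.

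The main technical point, and where I expect the principal obstacle to lie, is verifying \((V[G]_{D^*})_{U^*} = V[G]_{D^*}\). To this end, I would apply Proposition \ref{Proposition: indiscernible after ultrapower} inside \(V_D\) to \(U\), which is used at the first stage of the complete iteration \(i^D\). This yields
\[j_U^{N[G_{\vec s^{\hspace{0.04cm}D}}]}\circ i^D = i^D \qquad\text{and}\qquad j_U^{N[G_{\vec s^{\hspace{0.04cm}D}}]}(\vec s^{\hspace{0.04cm}D}) = \vec s^{\hspace{0.04cm}D}\setminus \{\delta^*\},\]
so on the ground-model side the image of \(V[G]_{D^*}\) is \(N[G_{\vec s^{\hspace{0.04cm}D}\setminus\{\delta^*\}}]\). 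Since \(\delta^*\) is an ordinal and hence lies in \(N\), deleting it from the generic sequence does not change the generated inner model, so \(N[G_{\vec s^{\hspace{0.04cm}D}\setminus\{\delta^*\}}] = N[G_{\vec s^{\hspace{0.04cm}D}}]\). Extending by the small-forcing part \(G\) on both sides gives the desired equality \((V[G]_{D^*})_{U^*} = V[G]_{D^*}\). The delicate step will be to rigorously justify that \(U\) lifts to a \(V[G]_{D^*}\)-ultrafilter \(U^*\) whose ultrapower coincides, once extended over \(G\), with this absorbed map.
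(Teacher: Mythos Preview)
Your proposal is correct and follows essentially the same approach as the paper. The paper's proof is terser, referring back to ``the foregoing analysis of \(W^0_1\)'' rather than explicitly invoking Proposition \ref{Proposition: indiscernible after ultrapower}, but the underlying mechanism is identical: the first measure \(U\) of the complete iteration generates a \(V[G]_{D^*}\)-ultrafilter whose ultrapower fixes \(N\) and removes a single known indiscernible from the generic, leaving the model \(V[G]_{D^*}\) unchanged. Your concern about the ``delicate step'' is easily resolved: Lemma \ref{Lemma: Generic} gives \(P(\delta^*)\cap N[G_{\vec s^{\hspace{0.04cm}D}}] = P(\delta^*)\cap V_D\), so \(U\) is already an \(N[G_{\vec s^{\hspace{0.04cm}D}}]\)-ultrafilter, and then L\'evy--Solovay over the small forcing \(\mathbb P_\kappa\) lifts it to \(V[G]_{D^*}\).
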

\subsection{Classifying the extensions of \(W\)}
In this section, we prove the special case of our classification of ultrafilters in \(V[G]\) 
for the ultrafilter
\(W = \sum_{D}U_{\delta(\alpha)}\). \begin{theorem}\label{Thm: Classification simple case}
     If \(\overline{W}\) is a countably complete \(V[G]\)-ultrafilter extending \(W\), then \(\overline{W} = W^i_n\) for some \(i\in \{0,1\}\) and \(n < \omega\).
\end{theorem}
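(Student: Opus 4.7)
\emph{Proof plan.} The first step is to project $\bar W$ via $\pi_1$. Since $W$ projects via $\pi_1$ to $D$, $\bar W$ projects to a countably complete $V[G]$-extension of $D$, which by Kaplan's theorem (Corollary~\ref{Cor: Kaplan's Theorem}) must be $D^*$. Hence $\bar W\geq_{RK}D^*$ via $\pi_1$, yielding a factor embedding $\pi: V[G]_{D^*}\to V[G]_{\bar W}$ with $\pi\circ j_{D^*}=j_{\bar W}$ and $\pi(\kappa)=\kappa$. Writing $[\mathrm{id}]_{\bar W}=(\kappa,\xi)$, the associated factor map $k: V_W\to j_{\bar W}(V)$ of Corollary~\ref{cor: hulls and ultrapowers} satisfies $k(\delta)=\xi$ and $k\circ j_U=\pi\circ i^D$ on $V_D$, so $\xi$ is a seed for $U$ via $\pi\circ i^D$: $U=\{A\subseteq \delta,\ A\in V_D : \xi\in\pi(i^D(A))\}$.

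Next I would apply Lemma~\ref{lemma: tilde U} in $V_D$, using the complete iteration $i^D$ and the normal $V_D$-ultrafilter $U$ on $\delta$. Let $\eta\leq\delta^*$ be the minimal ordinal with $\xi<\pi(\eta)$, and let $\tilde U$ be the $N$-ultrafilter on $\eta$ derived from $\pi\restriction N$ using $\xi$. The minimality of $\eta$ ensures $\xi\geq \pi(\alpha)$ for every $\alpha<\eta$, which makes $\tilde U$ fine on $\eta$; and the seed property of $\xi$ guarantees $\tilde U\supseteq\{i^D(A)\cap\eta : A\in U\}$. The lemma then leaves two cases: (a) $\tilde U$ is principal on $\{\delta_n\}$ for some $n<\omega$, forcing $\xi=\pi(\delta_n)\in\mathrm{range}(\pi)$; or (b) $\tilde U=i^D_{0,m}(U)=U_m$ for some $m\leq\omega$, and $\xi$ lies in the gap $[\sup\pi[\delta_m],\pi(\delta_m))$, outside $\mathrm{range}(\pi)$.

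In case (a), the hull clause of Corollary~\ref{cor: hulls and ultrapowers} reduces to $V[G]_{\bar W}=\pi[V[G]_{D^*}]$; since a surjective elementary embedding of transitive class models is the identity, $\pi=\mathrm{id}$, so $j_{\bar W}=j_{D^*}$ and $\bar W$ is derived from $j_{D^*}$ using the seed $(\kappa,\delta_n)$, i.e.\ $\bar W=W^1_n$. In case (b), countable completeness of $\bar W$ rules out $m=\omega$: the derived $V[G]_{D^*}$-ultrafilter on $\delta^*$ from $\pi$ using $\xi$ is countably complete and extends the uniform $U_\omega$, but the countable partition $\{[\delta_n,\delta_{n+1}) : n<\omega\}$ furnished by the Prikry sequence forces concentration on a bounded piece, contradicting uniformity. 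So $m<\omega$, and by L\'evy--Solovay the unique $\delta_m$-complete $V[G]_{D^*}$-extension of $U_m$ is $U_m^*$. Factoring $\pi$ through this ultrapower as $\pi=\pi'\circ j_{U_m^*}$ with $\pi'([\mathrm{id}]_{U_m^*})=\xi$, the hull condition once more forces $\pi'=\mathrm{id}$, so $\pi=j_{U_m^*}$, $\xi=\delta_m$, and the three ingredients of $\bar W$ match those of $W^0_m$.

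The main technical hurdle is the correct setup of Lemma~\ref{lemma: tilde U}, in particular the minimal choice of $\eta$ which is exactly what makes $\tilde U$ fine; any non-minimal choice breaks fineness because $\pi$ is continuous on intervals where it has no generator. After that, the case analysis is a routine unpacking of the three-ingredient characterization of lifts (restricted ultrapower, image generic, seed) combined with repeated application of Corollary~\ref{cor: hulls and ultrapowers} to force the factor maps $\pi$ and $\pi'$ to be identities.
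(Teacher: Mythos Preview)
Your proposal is correct and follows essentially the same approach as the paper's proof: project $\bar W$ to $D^*$ via the first coordinate, obtain the factor map (your $\pi$, the paper's $k$) from $V[G]_{D^*}$ to $V[G]_{\bar W}$, analyze the $N$-ultrafilter $\tilde U$ derived from this map using the second coordinate of the seed via Lemma~\ref{lemma: tilde U}, rule out the $m=\omega$ case by countable completeness, and then identify $\bar W$ with $W^1_n$ or $W^0_m$ according to whether $\tilde U$ is principal or equal to $U_m$. The only cosmetic difference is that the paper invokes Corollary~\ref{cor: hulls and ultrapowers} once at the outset to write $k=j_{\bar U}$ for a $V[G]_{D^*}$-ultrafilter $\bar U$ and then identifies $\bar U$, whereas you run the hull/surjectivity argument separately in each case; your phrasing of the L\'evy--Solovay step (``unique $\delta_m$-complete extension'') would be crisper as ``$U_m$ generates an ultrafilter in $V[G]_{D^*}$, hence any extension equals $U_m^*$,'' but the content is the same.
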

This proof contains most of the key ideas of the classification and avoids some notational difficulties involved in propagating the result to arbitrary ultrafilters.
\begin{proof}
Let $\overline{W}$ be an extension of $W$ to a $V[G]$-ultrafilter.    
Since \(\overline{W}\) extends \(W\), the Rudin-Keisler projection of
\((\pi_0)_*(\overline W)\) of $\overline{W}$ using $\pi_0$ extends \(D\), where \(\pi_0 : \kappa\times \kappa\to \kappa\) denotes the projection to the first coordinate . By Kaplan's theorem (Corollary \ref{Cor: Kaplan's Theorem}), it follows that \((\pi_0)_*(\overline W)\) must be equal to \(D^*\).
Therefore there is an elementary embedding \(k : V[G]_{D^*} \to V[G]_{\overline W}\) such that \(k\circ j_{D^*} = j_{\overline W}\) and \(k(\kappa) = \kappa\). 

Note that \([\text{id}]_{\overline W} = (\kappa,\overline \delta)\) for some ordinal $\overline{\delta}>\kappa$. 
Let \(\eta\) be the least ordinal such that \(k(\eta) > \overline \delta\) and let \(\overline{U}\) denote the \(V[G]_{D^*}\)-ultrafilter on \(\eta\) derived from \(k\) using \(\overline{\delta}\). By Corollary \ref{cor: hulls and ultrapowers}, $k=j_{\overline{U}}$ and $[id]_{\overline{U}}=\overline{\delta}$ .
\begin{claim} 
For some \(n < \omega\), \(\overline \delta = \delta_n\) and
either \(\overline{U} = U_n^*\) or \(\{\delta_n\}\in \overline U\), so \(\overline U\) is principal. 
\end{claim}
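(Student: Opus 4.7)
The plan is to apply Lemma \ref{lemma: tilde U} to $\bar U' := \bar U \cap N$, viewed as an $N$-ultrafilter on $\eta$, with $M = V_D$, $i = i^D$, measure sequence $j_D(\vec U)\restriction(\kappa, j_D(\kappa))$, $U = j_D(\vec U)_\delta$, and $\tilde U = \bar U'$. This localizes the classification to the two cases output by the lemma.

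I would first verify the three hypotheses. \emph{Bound on $\eta$:} since $W$ contains $\{(\alpha,\beta) : \beta < \delta(\alpha)\}$, we get $\bar\delta < j_{\bar W}(\delta)(\kappa) = k(i^D(j_D(\delta)(\kappa))) = k(\delta^*)$, so $\eta \leq \delta^* = i^D(\delta)$ by minimality. \emph{Fineness:} for limit $\eta$, each tail $(\alpha,\eta)$ lies in $\bar U$ because $k(\alpha) < \bar\delta < k(\eta)$; for $\eta = \zeta + 1$, minimality forces $\bar\delta = k(\zeta)$, so $\{\zeta\} \in \bar U$. \emph{Extension property:} let $i : V_W \to V[G]_{\bar W}$ be the canonical factor induced by $\bar W \supseteq W$, sending $[f]_W$ to $[f]_{\bar W}$ for $f \in V$. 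Then $k \circ i^D$ and $i \circ j_U$ are elementary embeddings $V_D \to V[G]_{\bar W}$ that agree on $j_D[V] \cup \{\kappa\}$ and hence on $V_D$. Since $i([\text{id}]_W) = [\text{id}]_{\bar W}$ yields $i(\delta) = \bar\delta$, for each $A \in U$ we obtain $\bar\delta = i(\delta) \in i(j_U(A)) = k(i^D(A))$, so $i^D(A) \cap \eta \in \bar U'$.

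Lemma \ref{lemma: tilde U} then yields either (a) $\{\delta_n\} \in \bar U'$ for some $n < \omega$, or (b) $\bar U' = i^D_{0,n}(U)$ for some $n \leq \omega$. In case (a), $\bar U$ is the principal ultrafilter at $\delta_n$, so Corollary \ref{cor: hulls and ultrapowers} identifies $k$ with $j_{\bar U}$, which is the identity for principal $\bar U$; therefore $\bar\delta = k(\delta_n) = \delta_n$, which combined with $\{\delta_n\} \in \bar U$ gives the principal alternative of the claim.

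In case (b), I would first rule out $n = \omega$: otherwise $\bar U'$ is countably complete and concentrated on $i^D_{0,\omega}(\delta) = \delta^*$, but $s^D_{\delta^*}$ makes $\mathrm{cof}(\delta^*) = \omega$ in $V[G]_{D^*}$, a contradiction. So $n < \omega$ and $\eta = \delta_n$. Because the Prikry components of $G_{\vec s^D}$ live on cardinals $\geq \delta^* > \delta_n$ and add no bounded subsets of $\delta^*$, combined with L\'evy-Solovay for the $\kappa^+$-cc forcing $\mathbb P_\kappa$, the $\delta_n$-complete measure $U_n := i^D_{0,n}(U)$ extends uniquely in $V[G]_{D^*}$, giving $\bar U = U_n^*$ and $\bar\delta = [\text{id}]_{\bar U} = \delta_n$. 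The main obstacles will be the identity $k \circ i^D = i \circ j_U$ (whose consequence $i(\delta) = \bar\delta$ forces the extension property) and the uniqueness of the extension in case (b), which relies crucially on the unchanged $P(\delta_n)$ in the extension.
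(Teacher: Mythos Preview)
Your proof is correct and follows essentially the same approach as the paper: restrict \(\bar U\) to an \(N\)-ultrafilter, apply Lemma~\ref{lemma: tilde U} with \(M=V_D\) and \(i=i^D\), and rule out the \(n=\omega\) case using the countable cofinality of \(\delta^*\) in \(V[G]_{D^*}\). The paper's version is terser --- it does not spell out the verification of fineness, the bound \(\eta\leq\delta^*\), or the extension property \(\{i^D(A)\cap\eta:A\in U\}\subseteq\tilde U\) --- whereas you supply these details, including the nice observation that \(k\circ i^D\) and the canonical factor map composed with \(j_U\) agree on \(V_D\) because they agree on \(j_D[V]\cup\{\kappa\}\); the paper's proof of the general classification (Theorem~\ref{Thm: Classification}) makes this step explicit in the analogous place, but the proof of the present claim does not.
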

Here $U_n^*$ is defined as in the paragraph preceding (\ref{equation: ell}).
Granting this claim, it is easy to see that either \(\overline W = W^0_n\) or \(\overline W = W^1_n\). 
Indeed if  $\{\delta_n\}\in\overline{U}$ then $j_{\overline{U}}$ is the identity and $j_{\overline{W}}=j_{D^*}$. Then $\overline{W}$ is derived from $j_{D^*}$ using $(\kappa,\delta_n)$ which is  by definition the ultrafilter $W^0_n$. If $\overline{U}=U^*_n$, then $\overline{W}$ is derived from $j_{U^*_n}\circ j_{D^*}$ using $(\kappa,\delta_n)$, which is by definition  $W^0_n$.   

To prove the claim, we analyze the \(N\)-ultrafilter \(\tilde{U} = \overline{U}\cap N\). We will show that for some \(n\), either \(\tilde U = U_n\) or \(\{\delta_n\}\in \tilde U\). The claim then follows since in either case,
\(\tilde U\) generates an \(N[G\times G_{\vec s}]\)-ultrafilter.

The analysis of \(\tilde U\) is an application of Lemma \ref{lemma: tilde U} in the case \(M = V_D\) and $i=i^D$, which implies that either \(\tilde{U}\) has the desired form or else \(\tilde{U} = i^D_{0\omega}(U)\). But 
the latter cannot occur, because \(\tilde{U}\) extends to a countably complete \(V[G]_{D^*}\)-ultrafilter (namely, \(\overline U\)), whereas \(i^D_{0\omega}(U)\) does not since $i^D_{0\omega}(\delta)$ has countable cofinality in $V[G]_{D^*}$.$\qedhere_{\text{Theorem \ref{Thm: Classification simple case}}}$
\end{proof}

\subsection{Some other ultrafilters}
We describe a slight variant of the classification from the previous section when the second ultrafilter in the sum does not come from the image of \(\vec U\). 
We start with any normal ultrafilter \(D\) on \(\kappa\).
For each \(\alpha < \kappa\), let \[\delta(\alpha) = \min (\Delta\setminus \alpha)\]
and let $Z_{\delta(\alpha)}\neq U_{\delta(\alpha)}$ be a normal measure on $\delta(\alpha)$.\footnote{Under UA, the existence of distinct normal measures on \(\kappa\) implies \(o(\kappa) \geq 2\), so the situation described here cannot occur unless $\delta(\alpha)$ is a limit of measurable cardinals. Since we are considering discrete sequences \(\vec U\), it is reasonable to consider the case where no element of \(\vec U\) is a limit of measurables.} 
Let \[W = \sum_{D} Z_{\delta(\alpha)}\] 

Again, one lift of \(W\) is the canonical extension $W^*$.

In this case there is no analog of the ultrafilter $W'$ considered in Section \ref{subSection: two normals} since $Z=[\alpha\mapsto Z_{\delta(\alpha)}]_D$ cannot be absorbed into $i^D$: the first ultrafilter applied in $i^D$ is $U\neq Z$. 

However, we still have the analogues of the ultrafilters \(W^0_n\).
Indeed, let us specify the ultrafilter \(W^0_n\) in this case by listing the three ingredients.
The major difference is the restricted ultrapower embedding associated with \(W^0_n\), which is no longer equal to \(i^W\circ j_W\). Instead, 
the restricted ultrapower embedding is $i^{j_{U^n}(j_W(\vec{U}))}\circ j_{U^n}\circ j_W$
and the seed is \((\kappa,\delta_n)\) where as before \(\delta_n = j_{U^n}(\delta)\). 
The image generic: we will have \(j_{W^0_n}(G) = G\times G_{\vec s^{\hspace{.04cm}n}}\)
where \(\vec s^{\hspace{.07cm}n}\) is  defined as follows: let $\vec{r}$ be the sequence of sets of indiscernibles obtained by the complete iteration of $j_{U^n}(j_W(\vec{U}))$ and let $\vec{s}^{\hspace{.07cm}n}$ be the sequence  obtained by adding $\delta_0,\dots,\delta_{n-1}$ to $\vec{r}$.

The proof that these three ingredients constitute a valid lift of \(W\) is as in the previous sections, as is the proof that the ultrafilters \(W^0_n\) exhaust all lifts of \(W\) to \(V[G]\). 
\section{Classification of ultrafilters in the Magidor extension}\label{Section: Classification}
Let \(\Delta\) be a discrete set of measurable cardinals with supremum \(\kappa\), and let \(\vec U = \langle U_\alpha : \alpha\in \Delta\rangle\) be a sequence of normal measures \(U_\alpha\) on \(\alpha\). 
Our goal in this section is to classify the extensions of sums of normal ultrafilters to \(V[G]\) where \(G\subseteq \mathbb P_\kappa\) is \(V\)-generic for the Magidor iteration of \(\vec U\). We begin with Section \ref{subsection: someextensions} by defining a family of extensions of a given sum of normal ultrafilters and analyzing the relationship between their ultrapowers. Then in Section \ref{subsection: classification}, we show that this family exhausts all extensions of the sum. 
\subsection{Extensions of an iterated sum of normal ultrafilters}\label{subsection: someextensions}

\begin{definition} Let $M$ be a transitive model of set theory. 
    \begin{itemize}
        \item A \textit{finite iteration of $M$} is a sequence \((D_m : m < n)\) such that for each \(m < n\), \(D_m\) is a \(M_{D_0,\cdots, D_{m-1}}\)-ultrafilter.
        \item Let $M_m = M_{D_0,...,D_{m-1}}$ and $j_{m_0m_1}:M_{m_0}\to M_{m_1}$ be the iterated ultrapower embedding $j_{D_{m_0}...D_{m_1-1}}$.
        \item An iteration is \textit{internal} if  for each \(m < n\),
        \(D_m\in M_{m}\). 
        \item An iteration is \textit{normal} if for each $m<n$, $D_m$ is a $M_m$-normal ultrafilter on $\delta_m$ and $\delta_0<\delta_1<...<\delta_{n-1}$.
        \end{itemize}
\end{definition}
We say that an iteration $(D_0,...,D_{n-1})$ is \textit{below} an ordinal $\gamma$ if for each $m<n$, $D_m$ is an ultrafilter on $\delta_m < j_{D_0,...,D_{m-1}}(\gamma)$.

\begin{definition}
The \textit{sum} of a normal iteration \((D_0,\dots,D_{n-1})\), denoted \(\Sigma(D_0,\dots,D_{n-1})\), is the unique ultrafilter \(W\) on \(\kappa^n\) 
such that \(j_W = j_{D_0,...,D_{n-1}}\)
and \([\text{id}]_{W} = (\delta_0,\delta_1,\dots,\delta_{n-1})\), where \(\kappa\) is least such that \((D_0,\dots,D_{n-1})\) is below \(\kappa\).
\end{definition}


The following theorem \cite[Theorems 5.3.8 and 5.3.13]{GoldbergUA} explains why it is natural to consider such sums in our classification:
\begin{theorem}[UA]\label{Them: sums of numral under UA}
    Assume that there is no cardinal $\kappa$ with $o(\kappa)=2^{2^\kappa}$.
    Then every countably complete ultrafilter is Rudin-Keisler equivalent to the sum of a normal iteration.
\end{theorem}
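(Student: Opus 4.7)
The plan is to combine two structural results from Goldberg's work on the Ultrapower Axiom. The first is the Irreducibility Theorem: under UA, every countably complete ultrafilter $W$ is Rudin--Keisler equivalent to an internal iterated sum $\Sigma(E_0,\dots,E_{n-1})$ whose factors $E_m$ are irreducible countably complete ultrafilters of $V_{E_0,\dots,E_{m-1}}$. The second is that, under the stated Mitchell-order bound $o(\kappa) < 2^{2^\kappa}$ for all $\kappa$, every such irreducible factor must in fact be a normal measure. Putting these together yields the desired internal decomposition of $W$ as a sum of normal measures.

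For the first ingredient, I would invoke Goldberg's factorization machinery in the UA monograph. Given a countably complete $W$, one peels off a maximal irreducible factor $E_0$, writes $W = E_0 \oplus W_0$ internally with $W_0 \in V_{E_0}$, and iterates the procedure inside $V_{E_0}$, which again models UA. The process terminates in finitely many steps because the critical points of the successive factors form a strictly increasing sequence controlled by the finitely many generators of $j_W$, and the resulting sum is internal by construction. This step uses only UA, not the Mitchell-order hypothesis.

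The main obstacle, and the place where the Mitchell-order hypothesis is really used, is showing that every irreducible countably complete ultrafilter $E$ appearing in the decomposition is normal. Suppose toward a contradiction that $E$ is irreducible but not normal, and set $\delta = \crit(j_E)$. By Goldberg's Dodd-soundness theorem, which is a consequence of UA, $E$ is Dodd-sound, so the seed $[\mathrm{id}]_E$ encodes, in the target model, a rich coherent family of normal measures on $\delta$. Quantifying this family through the standard Mitchell analysis of Dodd-sound seeds would produce in $V$ a $\lhd$-increasing sequence of normal measures on $\delta$ of length at least $2^{2^\delta}$, contradicting the hypothesis that $o(\delta) < 2^{2^\delta}$. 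Hence $E$ must be normal, and once every $E_m$ in the decomposition is known to be a normal measure, $W \equiv_{RK} \Sigma(E_0,\dots,E_{n-1})$ is the claimed internal iteration of normal measures.
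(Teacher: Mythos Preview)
The paper does not actually prove this theorem: it is stated without proof as a known consequence of UA, imported from Goldberg's monograph \cite{GoldbergUA}, and then used as a black box in the subsequent classification. So there is no ``paper's own proof'' to compare against.

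That said, your sketch is the right one and matches the standard argument in the UA literature. The two ingredients you name---the factorization of an arbitrary countably complete ultrafilter into an internal finite sum of irreducible ultrafilters, and the fact that under the hypothesis $o(\kappa)<2^{2^\kappa}$ every irreducible ultrafilter is normal via Dodd soundness---are exactly how the result is obtained in \cite{GoldbergUA}. One small caution: your termination argument for the factorization (``critical points form a strictly increasing sequence controlled by the finitely many generators'') is not quite the mechanism used there; the actual argument proceeds via well-foundedness of the seed order rather than a direct count of generators, and the critical points of successive irreducible factors need not be strictly increasing in general. This does not affect the correctness of the overall strategy, but if you were to write out the details you would want to invoke the well-foundedness result rather than the generator count.
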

Fix an internal iteration $(D_0,...,D_{n-1})$ of normal ultrafilters. Our plan is to classify all extensions of \(\Sigma(D_0,\dots, D_{n-1})\) to \(V[G]\) where \(G\subseteq \mathbb P_\kappa\) is \(V\)-generic. The first step is to define a countable family
\(\{\Sigma(D_0,\dots, D_{n-1})^u_x\}_{u,x}\)
of extensions of \(\Sigma(D_0,\dots, D_{n-1})\),
associated to some finite parameters \(u,x\).
 
Let \(d\) be the set of \(m \leq n\) such that \(\delta_m\in j_{0m}(\Delta)\), and let \(d'\) be the set of \(m\in d\) such that
\(D_m = j_{0m}(\vec U)_{\delta_m}\).

We will associate to each \(u : d'\to \{0,1\}\) and \(x : d\to \omega\) an extension \(\Sigma(D_0,\dots,D_{n-1})^u_x\) of \(\Sigma(D_0,\dots,D_{n-1})\).
Let us briefly explain the role of these parameters in terms of the three ingredients that determine the extension \(\overline{W} = \Sigma(D_0,\dots,D_{n-1})^u_x\). (The ``three ingredients" framework is introduced Section \ref{subSection: two normals}.)
Note that the
seed \([id]_{\overline{W}}\) has the form \(( \tilde{\delta}_0,\dots,\tilde{\delta}_{n-1})\).
The number \(u(\delta_m)\in \{0,1\}\) will determine whether or not \(\tilde{\delta}_m\) is a Pr\'ikry point in the image generic \(j_{\overline{W}}(G)\).
The number \(x(\delta_m) < \omega\) determines how many Pr\'ikry points are below \(\tilde{\delta}_m\) in the first Pr\'ikry sequence above \(\tilde{\delta}_m\).

By recursion on $m\leq n$, we define a \(V[G]\)-ultrafilter \(\overline{W}_m=\Sigma(D_0,\dots,D_{m-1})^u_x\) extending \(W_m=\Sigma(D_0,\dots,D_{m-1})\). We will also define an external iteration $(E_0,E_1,...,E_{m-1})$ of $V_{W_m}$ below $\delta_m$ whose well-founded last model $P_m$ completely iterates into $j_{\overline{W}_m}^{V[G]}(V)$. More precisely, 
\begin{itemize}
    \item Let $e_m:V_{W_m}\rightarrow P_m$ be the iterated ultrapower by $(E_0,...,E_{m-1})$.
    \item Let $i^{P_m}:P_m\rightarrow N_m$ be the complete iteration of $P_m$ by $e_m(j_{W_m}(\vec{U}))$ above $\kappa$.
\end{itemize}
    We will maintain that the following hold:
    \begin{itemize}
        \item $N_m=j_{\overline{W}_m}(V)$
        \item \(j_{\overline{W}_m}\restriction V=i^{P_m}\circ e_m\circ j_{W_m}\)
        \item \(i^{P_m}\circ e_m([id]_{W_m})=[id]_{\overline{W}_m}\)
    \end{itemize}
    This ensures, in particular, that the ultrafilter $\overline{W}_m$ lifts $W_m$.

Fix  \(m < n\) and assume that we have already defined 
\(\overline{W}_m\) and the associated external iteration $(E_0,...,E_{m-1})$ satisfying the bullets above.
We will define \(\overline{W}_{m+1}\) and $E_{m}$ and show in Claim \ref{claim: maintain} that our recursive hypotheses are maintained.

Let $i:V_{W_m}\rightarrow N_m$ be the composition $i^{P_m}\circ e_m$.
We will define an \(N_m\)-ultrafilter \(\tilde{D}_m\) extending 
\[\{i(A)\cap \eta : A\in D_m\}\]
for some \(\eta \leq i(\delta_m)\). 
The ultrafilter \(\tilde{D}_m\) will generate a \(V[G]_{\overline W_m}\)-ultrafilter, which we denote by \(D_m^{u,x}\), and we will set
\(\overline{W}_{m+1}= \Sigma(\overline W_m,D_m^{u,x})\).
Therefore in the end we will have \(\overline{W}_{n} = \Sigma(D_0^{u,x},\dots,D_{n-1}^{u,x})\).

If \(m \notin d\), \(\tilde{D}_m = i(D_m)\) and $E_m$ is the principal $P_m$-ultrafilter concentrated at $e_m(\delta_m)$. 

Now suppose \(m \in d\), in which case \(\tilde{D}_m\) will depend on \(u\) and \(x\). Define $E_m$ to be the external ultrafilter $e_m(j_{W_m}(\vec{U})_{\delta_m})^{x(m)}$. 
For each \(\beta\) less than the length of the complete iteration of \(P_m\), let \(\delta^\beta_m = i_{0\beta}^{P_m}(e_m(\delta_m))\) and \(D^\beta_m = i_{0\beta}^{P_m}(e_m(D_m))\).

Let \(\alpha\) be the first stage of the complete iteration of \(P_m\) such that \(\crit(i_{\alpha\alpha+1}^{P_m}) = \delta^\alpha_m\). 
If $m\in d'$ and \(u(m)=1\), let \(\tilde{D}_m\) be the principal ultrafilter concentrated at \(\delta^{\alpha+x(m)}_m\). Otherwise, let \(\tilde{D}_m = D^{\alpha+x(m)}_m\). 

We have
\(\{i(A)\cap \eta : A\in D_m\}\subseteq \tilde{D}_m\)
where     $$\eta =\begin{cases} i(\delta_m) & \text{if } m\notin d\\
    \delta^{\alpha+x(m)}_m+1 & m\in d' \text{ and } u(m)=1\\  \delta^{\alpha+x(m)}_m & \text{otherwise}
    \end{cases}$$
\begin{claim}
     \(\tilde{D}_m\) generates a \(V[G]_{\overline W_m}\)-ultrafilter \(D^{u,x}_m\)
\end{claim}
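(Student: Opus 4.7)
The plan is to reduce the claim to Theorem~\ref{Thm: Delta-mild generates an ultraiflter} applied inside $N_m$. The key setup is the observation that $V[G]_{\bar W_m} = N_m\bigl[j_{\bar W_m}(G)\bigr]$, where $j_{\bar W_m}(G)$ is $N_m$-generic for the Magidor iteration $j_{\bar W_m}(\mathbb P_\kappa) = i\bigl(e_m(j_{W_m}(\mathbb P_\kappa))\bigr)$ along the discrete set $\Delta^\ast = j_{\bar W_m}(\Delta) = i\bigl(e_m(j_{W_m}(\Delta))\bigr)$ with normal-measure sequence $j_{\bar W_m}(\vec U)$. Therefore it suffices to exhibit $\tilde D_m$ either as a principal $N_m$-ultrafilter, or as a $(\Delta^\ast \cap \eta)$-mild $N_m$-ultrafilter on some cardinal $\eta \notin \Delta^\ast$; in either case, Theorem~\ref{Thm: Delta-mild generates an ultraiflter} (or the trivial principal case) in $N_m$ produces the desired extension $D_m^{u,x}$.

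I would then split into three cases matching the clauses in the definition of $\tilde D_m$. \textbf{Case 1} ($m \in d'$ and $u(m) = 1$): $\tilde D_m$ is principal at $\delta^{\alpha + x(m)}_m$ by construction, so it trivially generates the principal $V[G]_{\bar W_m}$-ultrafilter at this ordinal. \textbf{Case 2} ($m \notin d$): $\tilde D_m = i(D_m)$, where $D_m$ is a normal measure on $\delta_m \notin j_{0m}(\Delta)$ in $M_m$. Discreteness of $j_{0m}(\Delta)$ combined with normality of $D_m$ gives $(j_{0m}(\Delta)\cap \delta_m)$-mildness of $D_m$ in $M_m$ (the function $f$ can be taken constant, equal to any ordinal in the gap between $\delta_m$ and $\min(j_{0m}(\Delta) \setminus (\delta_m+1))$, since $[\textnormal{id}]_{D_m} = \delta_m$ and $j_{D_m}$ fixes $\mu$ when $\mu > \delta_m$ lies appropriately); elementarity of $i$ then transfers this to $(\Delta^\ast \cap i(\delta_m))$-mildness of $\tilde D_m$ in $N_m$, with $i(\delta_m) \notin \Delta^\ast$ following from $\delta_m \notin j_{0m}(\Delta)$. \textbf{Case 3} (otherwise): $\tilde D_m = D^{\alpha + x(m)}_m$ is a normal $N_m$-ultrafilter on the Prikry-generic iteration point $\eta = \delta^{\alpha + x(m)}_m$, obtained by applying the iterated ultrapower $i^{P_m}_{0,\alpha+x(m)}$ to $e_m(D_m)$. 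Since $\eta$ is an iteration point (not in $j_{W_m}(\Delta)$ when pulled back), one has $\eta \notin \Delta^\ast$; I would verify the required mildness by combining elementarity of $i^{P_m}_{0,\alpha+x(m)}$ with the in-model mildness of $e_m(D_m)$ in $P_m$, which in turn is the elementary image of the normal measure $D_m$ in $M_m$.

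The main obstacle is Case 3. The subtlety is that the complete iteration $i^{P_m}$ of $P_m$ intervenes between $i^{P_m}_{0,\alpha+x(m)}$ and the final model $N_m$, and it could in principle introduce new elements of $\Delta^\ast$ in the relevant interval around $\eta$. The plan is to observe that the tail iteration $i^{P_m}_{\alpha+x(m),\infty}$ has critical point $\delta^{\alpha+x(m)}_m = \eta$, so $\Delta^\ast \cap \eta$ agrees with $i^{P_m}_{0,\alpha+x(m)}\bigl(e_m(j_{W_m}(\Delta))\bigr) \cap \eta$, reducing the mildness of $\tilde D_m$ in $N_m$ to the mildness of $D^{\alpha+x(m)}_m$ in its home iterate, and hence by elementarity back to Case 2 in $M_m$.
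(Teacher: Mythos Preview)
Your Cases 1 and 2 essentially match the paper. The gap is in Case~3.

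First, Theorem~\ref{Thm: Delta-mild generates an ultraiflter} is stated and proved for ultrafilters \emph{in} the ground model, but in Case~3 the ultrafilter $\tilde D_m = D^{\alpha+x(m)}_m$ is not an element of $N_m$. The ultrafilter applied at stage $\alpha+x(m)$ of the complete iteration is an image of $j_{W_m}(\vec U)_{\delta_m}$, which has Mitchell order~$0$; hence $\gamma = \delta^{\alpha+x(m)}_m$ is not measurable in $N_{m,\alpha+x(m)+1}$ and a fortiori not in $N_m$. So you cannot invoke the theorem ``inside $N_m$'' as written.

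Second, your reduction ``by elementarity back to Case~2 in $M_m$'' cannot work: in Case~3 one has $m\in d$, i.e.\ $\delta_m\in j_{0m}(\Delta)$, the opposite of the Case~2 hypothesis. The preimage of $\eta$ under $i^{P_m}_{0,\alpha+x(m)}\circ e_m$ is exactly $\delta_m$, which \emph{is} in $j_{W_m}(\Delta)$, contrary to your parenthetical. In the home iterate $N_{m,\alpha+x(m)}$ the cardinal $\gamma$ lies in the image of $\Delta$, so the hypothesis $\lambda\notin\Delta$ of Theorem~\ref{Thm: Delta-mild generates an ultraiflter} fails there as well.

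The paper's argument in Case~3 is simpler and sidesteps both issues. Your correct observation that $\crit(i^{P_m}_{\alpha+x(m),\infty})=\gamma$ gives $\Delta^*\cap\gamma = i^{P_m}_{0,\alpha+x(m)}\bigl(e_m(j_{W_m}(\Delta))\bigr)\cap\gamma$; since $\gamma$ itself lies in this discrete set, $\sup(\Delta^*\cap\gamma)<\gamma$. Thus the part of the Magidor product below $\gamma$ has size $<\gamma$ while the part above adds no subsets of $\gamma$. Since $P(\gamma)^{N_m}=P(\gamma)^{N_{m,\alpha+x(m)}}$, the ultrafilter $\tilde D_m$ is a $\gamma$-complete $N_m$-ultrafilter on $\gamma$, and the classical L\'evy--Solovay argument shows it generates an ultrafilter in $V[G]_{\bar W_m}$.
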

\begin{proof}
    The proof is by cases. In the first case when $m\notin d$ we appeal to Theorem \ref{Thm: Delta-mild generates an ultraiflter}. In the second case, when $m\in d'$ and $u(m)=1$, $\tilde{D}_m$ is principal and therefore trivially generates a $V[G]_{\overline{W}_m}$-ultrafilter. In the last case, \(\tilde{D}_m\) is a \(\gamma\)-complete \(N_m\)-ultrafilter on \(\gamma = \delta^{\alpha+x(m)}_m\), so it generates an ultrafilter by L\'{e}vy-Solovay. 
\end{proof}
Finally, define \[\overline{W}_{m+1} = \Sigma(\overline W_m,D_m^{u,x}).\]

 To complete the induction, we must prove the following claim:
\begin{claim}\label{claim: maintain}
    $N_{m+1}=j_{\overline{W}_{m+1}}(V)$, $  j_{\overline{W}_{m+1}}\restriction V=i^{P_{m+1}}\circ e_{m+1}\circ j_{W_{m+1}}$, and $
    i^{P_{m+1}}\circ e_{m+1}([id]_{W_{m+1}})=[id]_{\overline{W}_{m+1}}.$

\end{claim}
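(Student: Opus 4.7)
The plan is to reduce the three-part claim to three small identities and then combine them. By the general theory of sums, $\bar{W}_{m+1}=\Sigma(\bar W_m,D_m^{u,x})$ means that
$$j_{\bar W_{m+1}}=j_{D_m^{u,x}}^{V[G]_{\bar W_m}}\circ j_{\bar W_m},\qquad [\text{id}]_{\bar W_{m+1}}=\bigl(j_{D_m^{u,x}}^{V[G]_{\bar W_m}}([\text{id}]_{\bar W_m}),[\text{id}]_{D_m^{u,x}}\bigr),$$
and the target model is the ultrapower $(V[G]_{\bar W_m})_{D_m^{u,x}}$. The equality $N_{m+1}=j_{\bar W_{m+1}}(V)$ will then follow from the equality of embeddings. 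So it suffices to verify the two displayed claims about the embedding and the seed.

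First, I would restrict $j_{D_m^{u,x}}^{V[G]_{\bar W_m}}$ to $N_m=j_{\bar W_m}(V)$. Since $D_m^{u,x}$ is generated by the $N_m$-ultrafilter $\tilde D_m$, a Lévy--Solovay style argument (as in the proof of Corollary \ref{Cor: Kaplan's Theorem}, using that $V[G]_{\bar W_m}$ is a small forcing extension of $N_m$ relative to the completeness of $\tilde D_m$) yields
$$j_{D_m^{u,x}}^{V[G]_{\bar W_m}}\restriction N_m\;=\;j_{\tilde D_m}^{N_m}.$$
Second, I would prove the absorption identity
$$j_{\tilde D_m}^{N_m}\circ i^{P_m}\;=\;i^{P_{m+1}}\circ j_{E_m}^{P_m}.$$
By construction, $\tilde D_m$ is either principal at the appropriate indiscernible $\delta_m^{\alpha+x(m)}$ or equals $D_m^{\alpha+x(m)}=i^{P_m}_{0,\alpha+x(m)}(e_m(D_m))$, which is precisely the ultrafilter used at stage $\alpha+x(m)$ of the complete iteration of $P_m$ (where $\alpha$ is the first stage at which $e_m(\delta_m)$ is active, and $E_m$ is defined accordingly). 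Hence $j_{E_m}^{P_m}$ is absorbed into an initial segment of the complete iteration of $P_m$, and the remaining tail of that iteration coincides with the complete iteration of $P_{m+1}=(P_m)_{E_m}$ (this is where Proposition \ref{Proposition: indiscernible after ultrapower} is applied). One must check each of the three cases ($m\notin d$; $m\in d'$ with $u(m)=1$; and the default non-principal case) separately, but the argument is analogous.

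Third, I would relate $e_{m+1}\circ j_{W_{m+1}}$ to $j_{E_m}^{P_m}\circ e_m\circ j_{W_m}$. Using $j_{W_{m+1}}=j_{D_m}^{M_m}\circ j_{W_m}$ together with the Shift Lemma (Lemma \ref{lemma: the shift lemma}) applied to identify the iteration of $V_{W_{m+1}}$ by $(E_0,\dots,E_m)$ with the iteration of $V_{W_m}$ by the same sequence, shifted through $j_{D_m}^{M_m}$, gives
$$e_{m+1}\circ j_{D_m}^{M_m}\;=\;j_{E_m}^{P_m}\circ e_m,\quad\text{hence}\quad e_{m+1}\circ j_{W_{m+1}}\;=\;j_{E_m}^{P_m}\circ e_m\circ j_{W_m}.$$
Combining this with Steps 1 and 2 and the inductive hypothesis $j_{\bar W_m}\restriction V=i^{P_m}\circ e_m\circ j_{W_m}$ yields $j_{\bar W_{m+1}}\restriction V=i^{P_{m+1}}\circ e_{m+1}\circ j_{W_{m+1}}$. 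The seed equation follows from the same computation applied to $[\text{id}]_{W_{m+1}}=(j_{D_m}^{M_m}([\text{id}]_{W_m}),[\text{id}]_{D_m})$ and the explicit value of $\eta$ (the underlying set of $\tilde D_m$) given in the construction.

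The main obstacle is the absorption identity in Step 2: one must carefully identify which stage of the complete iteration of $P_m$ corresponds to $E_m$ and verify that truncating the iteration just after that stage yields exactly $j_{E_m}^{P_m}$ followed by the complete iteration of $P_{m+1}$. The tricky bookkeeping is that the complete iteration interleaves many normal measures, and one must use the discreteness of $\Delta$ together with Lemma \ref{lemma: tilde U} to confirm that $\tilde D_m$ is consistent with a single one of the ultrafilters in this interleaving.
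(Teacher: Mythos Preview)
Your overall architecture (reduce to analyzing $j_{D_m^{u,x}}\restriction N_m$, then chase commuting squares down to $P_m$ and across to $P_{m+1}$) matches the paper's. But Step~1 contains a genuine error that undermines the rest.

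\medskip

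\textbf{The gap in Step~1.} You claim
\[
j_{D_m^{u,x}}^{V[G]_{\bar W_m}}\restriction N_m \;=\; j_{\tilde D_m}^{N_m}
\]
via a L\'evy--Solovay argument. This is false. The passage from $N_m$ to $V[G]_{\bar W_m}=N_m[G\times G_m]$ is \emph{not} small relative to the completeness of $\tilde D_m$: the generic $G_m$ comes from the Magidor iteration on all of $i^{P_m}(e_m(j_{W_m}(\Delta)))\cap(\kappa,\infty)$, which includes coordinates above the underlying ordinal of $\tilde D_m$. Concretely, in the case $m\in d$, the measure used at stage $\alpha+x(m)$ of the complete iteration is itself a function $\delta_m^{\alpha+x(m)}\to 2$ lying in $N_{m,\alpha+x(m)}$ but not in $N_m$, so $N_m$ fails the closure needed for your identification. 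What is actually true (and what the paper proves via Mitchell's Lemma~\ref{Lemma: Mitchell} and its corollary) is that $j_{D_m^{u,x}}\restriction N_m$ equals the restriction to $N_m$ of an \emph{external} ultrapower computed in a larger model---namely $j_{D^0_m}^{P_m}\restriction N_m$ in the case $m\notin d$, or $j_{D_m^{\alpha+x(m)}}^{N_{m,\alpha+x(m)}}\restriction N_m$ otherwise. These differ from your $j_{\tilde D_m}^{N_m}$ by exactly the complete-iteration factor map $k$ appearing in Mitchell's lemma.

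\medskip

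\textbf{Knock-on errors in Steps~2 and~3.} Because Step~1 is off by this factor, your identities in Steps~2 and~3 are correspondingly miscalibrated. You write $P_{m+1}=(P_m)_{E_m}$ and $e_{m+1}\circ j_{D_m}=j_{E_m}^{P_m}\circ e_m$, but the paper's construction gives $P_{m+1}=((P_m)_{D^0_m})_{E_m}$ and $e_{m+1}\circ j_{D_m}=j_{E_m}\circ j_{D^0_m}^{P_m}\circ e_m$: there is a missing $j_{D^0_m}$ throughout. (Check the case $m\notin d$: there $E_m$ is principal, so your Step~3 would give $e_{m+1}\circ j_{D_m}=e_m$, which has the wrong target model.) The paper handles the resulting commutativity for $m\in d$ not via Proposition~\ref{Proposition: indiscernible after ultrapower} alone but via Kunen's commuting ultrapowers lemma in Woodin's form, which you do not invoke.

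\medskip

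In short: replace the L\'evy--Solovay appeal in Step~1 by Mitchell's lemma, insert the missing $j_{D^0_m}$ in Steps~2--3, and you will recover essentially the paper's argument.
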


\begin{proof}
 We consider the three cases. 
    For the first case, assume \(m\notin d\). Let $G_m$ be the $N_m$-generic filter given by the sequence of sets of indiscernibles associated with the complete iteration $i^{P_m}$. By the induction hypothesis, $V[G]_{\overline{W}_m}=N_m[G\times G_m]$. 
    By definition, in this case, \(\tilde{D}_m = i(D_m)\). Let $D^*_m$ be the $N_m[G_m]$-ultrafilter generated by $\tilde{D}_m$.  
\begin{figure}[H]
    \centering   
\[\begin{tikzcd}
	{V[G]_{\overline{W}_{m}}} & {V[G]_{\overline{W}_{m+1}}} \\
	{N_m[G_m]} & {N_{m+1}[G_{m+1}]} \\
	{N_m} & {N_{m+1}} \\
	& {N_{m+1,j_{D^0_m}(\alpha)}} \\
	{N_{m,\alpha}} & {N_{m+1,\alpha}} \\
	{P_m} & {P_{m+1}} \\
	{V_{W_m}} & {V_{W_{m+1}}}
	\arrow["{D^{u,x}_m}", from=1-1, to=1-2]
	\arrow[phantom, sloped,"\subseteq", from=2-1, to=1-1]
	\arrow["{D^*_m}", from=2-1, to=2-2]
	\arrow[phantom, sloped,"\subseteq", from=2-2, to=1-2]
	\arrow[phantom, sloped,"\subseteq", from=3-1, to=2-1]
	\arrow["{j_{D^0_m}\restriction N_m}", from=3-1, to=3-2]
	\arrow[phantom, sloped,"\subseteq", from=3-2, to=2-2]
	\arrow["{i^{P_{m+1}}_{j_{D^0_m}(\alpha),\infty}}"', from=4-2, to=3-2]
	\arrow["{i^{P_m}_{\alpha,\infty}}", from=5-1, to=3-1]
	\arrow["{j_{D^0_m}\restriction N_{m,\alpha}}", from=5-1, to=4-2]
	\arrow["{D^\alpha_m}", from=5-1, to=5-2]
	\arrow["{i^{P_{m+1}}_{\alpha,j_{D^0_m}(\alpha)}}"', from=5-2, to=4-2]
	\arrow["{i^{P_m}_{0,\alpha}}", from=6-1, to=5-1]
	\arrow["{D^0_m}", from=6-1, to=6-2]
	\arrow["{i^{P_{m+1}}_{0,\alpha}}"', from=6-2, to=5-2]
	\arrow["{e_m}", from=7-1, to=6-1]
	\arrow["{D_m}"', from=7-1, to=7-2]
	\arrow["{e_{m+1}}"', from=7-2, to=6-2]
\end{tikzcd}\]

    \caption{The case that $m\notin d$}
    \label{fig:enter-label}
\end{figure}
 Note that by L\'{e}vy-Solovay,   \[j_{D^{u,x}_m}\restriction {N_m[G_{m}]} = j_{D^*_m}^{N_m[G_{m}]}\]
     The key point is that
     \begin{equation}\label{Equation key} j^{N_m[G_m]}_{D^*_m}\restriction N_m=j_{D^0_m}\restriction N_m\end{equation}
     To see this, let $\alpha$ be the least ordinal such that $\crit(i^{P_m}_{\alpha,\alpha+1})>e_m(\delta_m)$ and consider the model $N_{m,\alpha}[G_m\restriction \delta^0_m]$.  Note that $G_m\restriction \delta^0_m$ is $N_{\alpha,m}$-generic for a forcing which has smaller cardinality than the critical point of the embedding $i_{\alpha,\infty}^{P_m}$. So by the L\'evy-Solovay argument, $i^{P_m}_{\alpha,\infty}$ lifts to an embedding $i^*_{\alpha,\infty}:N_{m,\alpha}[G_m\restriction \delta^0_m]\to N_m[G_m\restriction \delta^0_m]$. 
     It is easy to see that $i^*_{\alpha,\infty}$ 
    is the complete iteration of 
     \(N_{m,\alpha}[G_m\restriction \delta^0_m]\) via the canonical lift of the sequence \(i^{P_m}_{0\alpha}(e_m(\vec U))\restriction (\delta^0_m,\infty)\).
     Moreover \(N_m[G_m]\) is the generic extension of the final model of this iteration by the filter obtained from the associated sequence of sets of indiscernibles. 
    Therefore we can apply Lemma \ref{Lemma: Generic} in \(N_{m,\alpha}[G_m\restriction \delta^0_m]\) to conclude that $N_m[G_m]$ is closed under $\delta^0_m$-sequences from $N_{m,\alpha}[G_m\restriction \delta^0_m]$.
     
     Note that \begin{equation}\label{equation: restriction is commutative}
     j_{D^\alpha_m}\circ i^{P_m}_{0,\alpha}=i^{P_{m+1}}_{0,\alpha}\circ j_{D^0_m}
     .\end{equation}
     This follows once we prove that $i^{P_{m+1}}_{0,\alpha}=i^{P_m}_{0,\alpha}\restriction P_{m+1}$. This is a routine induction on $\beta\leq\alpha$ using that $\crit(j_{D^0_\alpha})$ is greater than all the measurable cardinals appearing in the iteration $i^{P_m}_{0,\alpha}$.

     By Mitchell's lemma (Lemma \ref{Lemma: Mitchell}) applied in $P_m$ with $W=D^0_m$, \[j^{N_{m,\alpha}[G_m\restriction \delta^0_m]}_{D^*_m}=j_{D^0_m}\restriction N_{m,\alpha}[G_m\restriction \delta^0_m].\]
     Since $N_m[G_m]$ is closed under $\delta^0_m$-sequences from $N_{m,\alpha}[G_m\restriction \delta^0_m]$,
     this implies that
     \begin{equation}\label{equation: closure under sequences} j^{N_{m}[G_m]}_{D^*_m}=
     j^{N_{m,\alpha}[G_m\restriction \delta^0_m]}_{D^*_m}\restriction N_m[G_m] = j_{D^0_m}\restriction N_m[G_m]\end{equation}
     which proves (\ref{Equation key}).

    By definition of $e_{m+1}$, \(e_{m+1}= e_m\restriction V_{W_{m+1}}\) ensuring that
    $$j_{D_m^0}\circ e_m=e_{m+1}\circ j_{D_m}.$$
    Also, \(i^{P_{m+1}} = j_{D^0_m}(i^{P_m})\).
    We have \[j_{D_m^{u,x}}
    \circ i^{P_m} = j_{D^*_m}^{N[G_{m}]}\circ i^{P_m} = j_{D^0_m}\circ i^{P_m} = i^{P_{m+1}} \circ j_{D^0_m}^{P_m}\]
    Moreover
    \[j^{P_m}_{D^0_m}\circ e_m = e_{m+1}\circ j_{D_m}\]
    Combining these equations, we get
    \begin{align*}
        i^{P_{m+1}}\circ e_{m+1}\circ j_W &= i^{P_{m+1}} \circ e_{m+1} \circ j_{D_m}\circ j_{W_m} \\
        &= i^{P_{m+1}}\circ j^{P_m}_{D^0_m}\circ e_m\circ j_{W_m} \\
        &= j_{D_m^{u,x}}\circ i^{P_m} \circ e_m \circ j_{W_m} \\
        &= j_{D_m^{u,x}}\circ j_{\overline W_m}\restriction V \\
        &= j_{\overline W_{m+1}}\restriction V
    \end{align*}
    To finish the case $m\notin d$, by the normality of $D^{u,x}_m$,  $[id]_{D^{u,x}_m}$ is the ordinal $\beta$ over which $D^{u,x}_m$ is an ultrafilter. On the other hand, $i^{P_{m+1}}(e_{m+1}(\delta_m))=i^{P_{m+1}}_{0,\alpha}(e_{m}(\delta_m))=\beta$. 
    Hence $i^{P_{m+1}}(e_{m+1}([id]_{W_{m+1}})=[id]_{\overline{W}_{m+1}}$.
    
    Next consider the case where $m\in d'$ and $u(m)=1$. By definition $\tilde{D}_m$ is $p_{\delta_m^{\alpha+x(m)}}$, the principal ultrafilter concentrated at $\delta_m^{\alpha+x(m)}$. The following diagram commutes:

\begin{figure}[H]
    \centering
\[\begin{tikzcd}
	{V[G]_{\overline{W}_{m}}} & {V[G]_{\overline{W}_{m+1}}} \\
	{N_m[G_m]} & {N_{m+1}[G_{m+1}]} \\
	{N_m} & {N_{m+1}} \\
	{N_{m,\alpha+x(m)+2}} \\
	{N_{m,\alpha+x(m)+1}} & {N_{m+1,\alpha+1}} \\
	{N_{m,\alpha+x(m)}} & {N_{m+1,\alpha}} \\
	{N_{m,\alpha}} & {N'_{m,\alpha}} \\
	{P_m} & {P'_m} & {P_{m+1}} \\
	{V_{W_m}} & {V_{W_{m+1}}}
	\arrow["{p_{\delta^{\alpha+x(m)}_m}}"', tail reversed, from=1-1, to=1-2]
	\arrow[phantom, sloped,"\subseteq", from=2-1, to=1-1]
	\arrow["{p_{\delta^{\alpha+x(m)}_m}}"', tail reversed, from=2-1, to=2-2]
	\arrow[phantom, sloped,"\subseteq", from=3-1, to=2-1]
	\arrow[phantom, sloped,"\subseteq", from=2-2, to=1-2]
	\arrow["{p_{\delta^{\alpha+x(m)}_m}}"', tail reversed, from=3-1, to=3-2]
	\arrow[phantom, sloped,"\subseteq", from=3-2, to=2-2]
	\arrow["{i_{\alpha+x(m)+2,\infty}^{P_m}}"', from=4-1, to=3-1]
	\arrow["{p_{\delta^{\alpha+x(m)}_m}}", tail reversed, from=4-1, to=5-2]
	\arrow[from=5-1, to=4-1]
	\arrow["{i^{P_{m+1}}_{\alpha+1,\infty}}"', from=5-2, to=3-2]
	\arrow["{D^{\alpha+x(m)}_m}", from=6-1, to=5-1]
	\arrow["{D^{\alpha+x(m)}_m}", from=6-1, to=6-2]
	\arrow["{p_{\delta^{\alpha+x(m)}_m}}"', tail reversed, from=6-2, to=5-1]
	\arrow[from=6-2, to=5-2]
	\arrow["{i_{0,\alpha}^{P_m}(E_m)}", from=7-1, to=6-1]
	\arrow["{D^\alpha_m}", from=7-1, to=7-2]
	\arrow["{i_{0,\alpha}^{P_m}(E_m)}", from=7-2, to=6-2]
	\arrow["{i^{P_m}}", curve={height=-70pt}, from=8-1, to=3-1]
	\arrow["{i^{P_m}_{0,\alpha}}", from=8-1, to=7-1]
	\arrow["{D^0_m}", from=8-1, to=8-2]
	\arrow["{i^{P'_{m}}_{0,\alpha}}", from=8-2, to=7-2]
	\arrow["{E_m}", from=8-2, to=8-3]
	\arrow["{i^{P_{m+1}}}"', curve={height=25pt}, from=8-3, to=3-2]
	\arrow["{i^{P_{m+1}}_{0,\alpha}}"', from=8-3, to=6-2]
	\arrow["{e_m}", from=9-1, to=8-1]
	\arrow["{D_m}"', from=9-1, to=9-2]
	\arrow["{e_m}"', from=9-2, to=8-2]
	\arrow["{e_{m+1}}"', from=9-2, to=8-3]
\end{tikzcd}\]
 \caption{The case that $u(m)=1$.}
    \label{fig: principal case}
\end{figure}
The commutativity of the second square from the bottom is proved as in Equation \ref{equation: restriction is commutative}.
The only other part of the diagram whose commutativity is not immediate is  $$i^{P_{m+1}}_{0,\alpha}\circ j_{E_m}=j_{i^{P_m}_{0,\alpha}(E_m)}\circ i^{P'_m}_{0,\alpha}$$
where $P'_m=e_m(V_{W_{m+1}})$ and $i^{P'_m}_{0,\alpha}:P'_m\rightarrow N'_{m,\alpha}$ is the $\alpha^{\text{th}}$ stage of the complete iteration of $P'_m$ by $e_m(j_{W_{m+1}}(\vec{U}))$ above $\kappa$.
\[\begin{tikzcd}
	{N'_{m,\alpha}} & {N_{m+1,\alpha}} \\
	{P'_m} & {P_{m+1}}
	\arrow["{i^{P_m}_{0,\alpha}(E_m)}", from=1-1, to=1-2]
	\arrow["{i^{P'_m}_{0,\alpha}}", from=2-1, to=1-1]
	\arrow["{E_m}", from=2-1, to=2-2]
	\arrow["{i^{P_{m+1}}_{0,\alpha}}"', from=2-2, to=1-2]
\end{tikzcd}\]
This commutativity is true since $$ j_{i^{P_m}_{0,\alpha}(E_m)}^{N'_{m,\alpha}} = j_{E_m}\restriction N'_{m,\alpha}\text{ and }i_{0,\alpha}^{P_{m+1}}=j_{E_m}(i_{0,\alpha}^{P'_m}).$$
We include some details on how to show
\(j_{i^{P_m}_{0,\alpha}(E_m)}^{N_{m,\alpha}'} = j_{E_m}\restriction N'_{m,\alpha}\).
For this, we use that 
 \(j_{i^{P_m}_{0,\alpha}(E_m)}^{N'_{m,\alpha}} = j_{i^{P_m}_{0,\alpha}(E_m)}^{N_{m,\alpha}}\restriction N_{m,\alpha}'\).
 Since \(E_m\in P_m\),
 we can use Kunen's commuting ultrapowers lemma, as it appears in Woodin's
 \cite[Lemma 3.30]{Woodin},
 to conclude that \[j_{i^{P_m}_{0,\alpha}(E_m)}^{N_{m,\alpha}} = j_{E_m}^{P_m}\restriction N_{m,\alpha}.\]
    (We apply Woodin's lemma in \(P_m\) with \(j = i_{0,\alpha}^{P_m}\) and \(E = E_m\). Note that some of the generality of Woodin's lemma is not necessary here since \(j\) is definable over \(P_m\) rather than generic.)
    
    Since $j_{\overline{W}_{m+1}}=j_{\overline{W}_m}$ then the commutativity of the diagram in Figure \ref{fig: principal case} can be used to deduce that $$j_{\overline{W}_{m+1}}\restriction V=j_{\overline{W}_m}\restriction V=i^{P_{m}}\circ e_{m}\circ j_{W_{m}}=i^{P_{m+1}}\circ e_{m+1}\circ j_{W_{m+1}}.$$
In particular, 
$$N_{m+1}=N_m=j_{\overline{W}_m}(V)=j_{\overline{W}_{m+1}}(V)$$
Finally, we show that $i^{P_{m+1}}\circ e_{m+1}([id]_{W_{m+1}})=[id]_{\overline{W}_{m+1}}$. By the induction hypothesis it suffices to show that $i^{P_{m+1}}(e_{m+1}(\delta_m))=\delta^{\alpha+x(m)}_m$:
$$i^{P_{m+1}}(e_{m+1}(\delta_m))=i^{P_{m+1}}_{\alpha,\infty}(i^{P_{m+1}}_{0,\alpha}(e_{m+1}(\delta_m)))=i^{P_{m+1}}_{\alpha,\infty}(j_{i^{P_m}_{0,\alpha}(E_m)}(i_{0,\alpha}^{P'_m}(e_m(\delta_m))=$$
$$=i^{P_{m+1}}_{\alpha,\infty}(j_{i^{P_m}_{0,\alpha}(E_m)}(i_{0,\alpha}^{P_m}(e_m(\delta_m))=i^{P_{m+1}}_{\alpha,\infty}(\delta^{\alpha+x(m)}_m)=\delta^{\alpha+x(m)}_m.$$

Finally consider the case where where either $m\in d\setminus d'$ or $m\in d'$ but $u(m)=0$.  Let $\ell$ be the restriction to $N_m$ of the ultrapower embedding of $N_{m,\alpha+x(m)}$ by $D^{\alpha+x(m)}_{m}$. 
\begin{figure}
    \centering
\[\begin{tikzcd}
	{V[G]_{\overline{W}_{m}}} & {V[G]_{\overline{W}_{m+1}}} \\
	{N_m[G_m]} & {N_{m+1}[G_{m+1}]} \\
	{N_m} & {N_{m+1}} \\
	\\
	{N_{m,\alpha+x(m)}} & {N_{m+1,\alpha}} \\
	{N_{m,\alpha}} & {N'_{m,\alpha}} \\
	{P_m} & {P'_m} & {P_{m+1}} \\
	{V_{W_m}} & {V_{W_{m+1}}}
	\arrow["{D^{u,x}_m}", from=1-1, to=1-2]
	\arrow[phantom, sloped,"\subseteq", from=2-1, to=1-1]
	\arrow["{D^*}", from=2-1, to=2-2]
	\arrow[phantom, sloped,"\subseteq", from=2-2, to=1-2]
	\arrow[phantom, sloped,"\subseteq", from=3-1, to=2-1]
	\arrow["\ell", from=3-1, to=3-2]
	\arrow[phantom, sloped,"\subseteq", from=3-2, to=2-2]
	\arrow["{i_{\alpha+x(m),\infty}^{P_m}}", from=5-1, to=3-1]
	\arrow["{D^{\alpha+x(m)}_m}", from=5-1, to=5-2]
	\arrow["{i^{P_{m+1}}_{\alpha,\infty}}"', from=5-2, to=3-2]
	\arrow["{i^{P_m}_{0,\alpha}(E_m)}", from=6-1, to=5-1]
	\arrow["{D^\alpha_m}", from=6-1, to=6-2]
	\arrow["{i^{P_m}_{0,\alpha}(E_m)}", from=6-2, to=5-2]
	\arrow["{i^{P_m}_{0,\alpha}}", from=7-1, to=6-1]
	\arrow["{D^0_m}", from=7-1, to=7-2]
	\arrow["{i^{P'_{m}}_{0,\alpha}}", from=7-2, to=6-2]
	\arrow["{E_m}", from=7-2, to=7-3]
	\arrow["{i^{P_{m+1}}_{0,\alpha}}"', from=7-3, to=5-2]
	\arrow["{e_m}", from=8-1, to=7-1]
	\arrow["{D_m}", from=8-1, to=8-2]
	\arrow["{e_m}"', from=8-2, to=7-2]
	\arrow["{e_{m+1}}"', from=8-2, to=7-3]
\end{tikzcd}\]    \caption{The case where $u(m)\neq 1$}
    \label{fig: the case otherwise}
\end{figure}
   Note that the bottom part of the diagram in Figure \ref{fig: the case otherwise} is identical to the bottom part of the diagram in Figure \ref{fig: principal case} and in particular it commutes. In fact the whole diagram commutes and the key to that is that the embedding $\ell$ is the restriction to $N_m$ of $j^{N_m[G_m]}_{D^*}$ where $D^*$ is the $N_m[G_m]$-ultrafilter generated by  $D^{\alpha+x(m)}_m$. 
   The justification for this is as in Equation \ref{equation: closure under sequences}.  
   The commutativity of the rest of the diagram is a straightforward verification, and the remainder of the proof of the claim in this case is then identical to the previous part. 
   
   This completes the proof, but let us note here that in the case where $m\in d'$ we obtain
       $$N_{m+1}=j_{\overline{W}_{m+1}}(V)=j_{\overline{W}_m}(V)=N_m.$$
   This is because in this case $N_{m+1,\alpha}=N_{m,\alpha+x(m)+1}$. Moreover, \begin{equation}\label{N_m=N_{m+1}}V[G]_{\overline{W}_{m+1}}=N_{m+1}[G\times G_{m+1}]=N_{m}[G\times G_{m}]=V[G]_{\overline{W}_m}\end{equation}
   Indeed $G_m$ and $G_{m+1}$ differ by exactly one ordinal since they are given the sequences of sets of indiscernibles associated with essentially the same complete iterations (see also Proposition \ref{Proposition: indiscernible after ultrapower}).
\end{proof}
\begin{lemma}\label{Lemma: generic sums}
    Fix an internal iteration $(D_0,...,D_{n-1})$ of normal ultrafilters,  let \(d\) and \(d'\) be as in the paragraph following Theorem \ref{Them: sums of numral under UA}. Suppose that $d=d'$
and fix \(u : d\to \{0,1\}\) and \(x : d\to \omega\). Let $\overline W=\sum(D_0,...,D_{n-1})^{u}_x$. Then in $V[G]$ there is an internal iteration $(F_0,...,F_{\ell-1})$ of normal ultrafilters such that $V[G]_{\overline{W}}=V[G]_{F_0,...,F_{\ell-1}}$.
\end{lemma}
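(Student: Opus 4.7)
I would prove this by induction on $n$. At each step, I would show that either $D_m^{u,x}$ is principal (in which case it contributes nothing to the ultrapower) or $D_m^{u,x}$ is Rudin--Keisler equivalent, within $V[G]_{\bar W_m}$, to a normal ultrafilter on its critical ordinal, which I would then adjoin to the iteration list.

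The base case $n = 0$ is trivial; take $\ell = 0$. For the inductive step, suppose $(F_0,\dots,F_{\ell_0-1})$ is an internal iteration of normal ultrafilters of $V[G]$ with $V[G]_{\bar W_m} = V[G]_{F_0,\dots,F_{\ell_0-1}}$, and consider the step $\bar W_{m+1} = \Sigma(\bar W_m, D_m^{u,x})$, broken into the three cases in the construction of $D_m^{u,x}$. If $m \in d = d'$ and $u(m) = 1$, then $D_m^{u,x}$ is principal, so by Equation~(\ref{N_m=N_{m+1}}) we have $V[G]_{\bar W_{m+1}} = V[G]_{\bar W_m}$, and we keep the iteration list unchanged. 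If $m \notin d$, then $\tilde D_m = i(D_m)$ is a normal $N_m$-ultrafilter on $i(\delta_m) \notin i(j_{0m}(\Delta))$, hence $i(\Delta)$-mild in $N_m$; by Corollary~\ref{Cor: Kaplan's Theorem} applied inside $N_m$ it generates a normal $V[G]_{\bar W_m}$-ultrafilter, which we take as $F_{\ell_0}$. If $m \in d$ and $u(m) = 0$, then $\tilde D_m = D_m^{\alpha+x(m)}$ is a normal $N_m$-ultrafilter on the iterate $\delta_m^{\alpha+x(m)}$, and the L\'evy--Solovay argument used in the proof of the Claim (cf.\ Equation~(\ref{equation: closure under sequences})) together with the closure of $N_m[G \times G_m]$ under $\delta_m^{\alpha+x(m)}$-sequences from Lemma~\ref{Lemma: Generic} shows that $\tilde D_m$ generates a normal $V[G]_{\bar W_m}$-ultrafilter, which we again take as $F_{\ell_0}$.

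In the two non-principal cases, $F_{\ell_0}$ is a normal ultrafilter lying in $V[G]_{F_0,\dots,F_{\ell_0-1}}$, its critical ordinal is the seed of $D_m^{u,x}$ and is strictly above the critical ordinals of the earlier $F_i$'s (since these critical ordinals are drawn from the strictly increasing sequence of iterates of the $\delta_m$'s), and $D_m^{u,x}$ is Rudin--Keisler equivalent to $F_{\ell_0}$ via projection to the seed, so
\[
V[G]_{\bar W_{m+1}} = (V[G]_{\bar W_m})_{D_m^{u,x}} = V[G]_{F_0,\dots,F_{\ell_0}}.
\]
Iterating to $m = n$ produces the desired internal iteration.

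The main obstacle I anticipate is verifying normality in the third case: the forcing $G_m$ extending $N_m$ to $V[G]_{\bar W_m}$ is not small compared to $\delta_m^{\alpha+x(m)}$, so a naive L\'evy--Solovay argument does not apply directly. The resolution runs through the structural analysis already used in the proof of the Claim, in which one uses that $N_m[G\times G_m]$ is closed under $\delta_m^{\alpha+x(m)}$-sequences over a small-forcing extension of an earlier stage $N_{m,\alpha}$ of the complete iteration, permitting a restricted L\'evy--Solovay-type argument to preserve normality and deliver $\tilde D_m$ as a normal $V[G]_{\bar W_m}$-ultrafilter.
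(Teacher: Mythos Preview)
Your inductive structure and the handling of the cases $m\notin d$ and $m\in d'$, $u(m)=1$ are fine and match the paper. The gap is in the case $m\in d'$, $u(m)=0$. There you propose to take $F_{\ell_0}=D_m^{u,x}$, the $V[G]_{\bar W_m}$-ultrafilter generated by $\tilde D_m=D_m^{\alpha+x(m)}$. The problem is not normality but \emph{internality}: this ultrafilter is not an element of $V[G]_{\bar W_m}$, so it cannot appear in an internal iteration. Indeed, since $m\in d'$ we have $D_m=j_{0m}(\vec U)_{\delta_m}$, so $e_m(D_m)$ has Mitchell order $0$ in $P_m$, and by elementarity $D_m^{\alpha+x(m)}$ has Mitchell order $0$ in $N_{m,\alpha+x(m)}$; hence $D_m^{\alpha+x(m)}\notin N_{m,\alpha+x(m)+1}$, and since the remaining critical points of the complete iteration lie above $\delta_m^{\alpha+x(m)}$, also $D_m^{\alpha+x(m)}\notin N_m$. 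The forcing $G\times G_m$ does not rectify this (compare the explicit remark in Section~\ref{Section: Some ultrafilters} that the analogous $U_n$ is an $N[G_{\vec s^{\hspace{.04cm}D}}]$-ultrafilter but not an element of $N[G_{\vec s^{\hspace{.04cm}D}}]$). Equivalently: by Equation~(\ref{N_m=N_{m+1}}) one has $V[G]_{\bar W_{m+1}}=V[G]_{\bar W_m}$ here, so if $D_m^{u,x}$ were internal it would be a normal ultrafilter lying in its own ultrapower, which is impossible.

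The fix is exactly what the paper does: in the case $m\in d'$, $u(m)=0$, do not add a new $F$; simply invoke Equation~(\ref{N_m=N_{m+1}}) to get $V[G]_{\bar W_{m+1}}=V[G]_{\bar W_m}$ and keep the iteration list unchanged. With this correction the induction goes through, and the ``main obstacle'' you anticipated disappears, since the only case in which a new $F_{\ell_0}$ is adjoined is $m\notin d$, where $\tilde D_m=i(D_m)\in N_m$ and Corollary~\ref{Cor: Kaplan's Theorem} applies directly.
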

\begin{proof}
    An easy induction using the definition of $\sum(D_0,...,D_{n-1})^{u}_x$ in the case $d=d'$. Note that in the case that $m\notin d$, $D^{u,x}_m$ is an internal normal ultrafilter of $V[G]_{\overline{W}_m}$. If $m\in d$ then $m\in d'$ by our assumption and $V[G]_{\overline{W}_m}=V[G]_{\overline{W}_{m+1}}$ which follows by Equation \ref{N_m=N_{m+1}} if $u(m)=0$ or since $D^{u,x}_m$ is principal in the case that $u(m)=1$. 
\end{proof}
\subsection{Classifying the extensions of sums of normals}\label{subsection: classification}
In this section, we classify the extensions to \(V[G]\) of sums of normal ultrafilters; i.e., ultrafilters of the form $\Sigma(D_0,...,D_n)$. As expected, the proof is by induction on $n$. Recall that given a finite iteration $(D_0,...,D_n)$, we define $d=d(D_0,...,D_n)$ as the set of all $m\leq n$ such that $\delta_m\in j_{0,m}(\Delta)$ and $d'=d'(D_0,...,D_n)$ as the set of $m\in d$ such that $D_m=j_{0,m}(\vec{U})_{\delta_m}$.
\begin{theorem}\label{Thm: Classification}
     Let \(\overline{W}\) be a countably complete \(V[G]\)-ultrafilter extending \(\Sigma(D_0,...,D_n)\). Then \(\overline{W} = \Sigma(D_0,...,D_n)^u_x\) for some \(u:d'\rightarrow \{0,1\}\) and \(x:d\rightarrow \omega\).
\end{theorem}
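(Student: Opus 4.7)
The plan is to proceed by induction on $n$. The base case $n=0$ amounts to Kaplan's theorem: the normal ultrafilter $D_0$ on $\kappa$ is $\Delta$-mild (since $\kappa\notin\Delta$), so by Corollary~\ref{Cor: Kaplan's Theorem} its unique countably complete extension is the canonical one $D_0^*$, which is precisely $\Sigma(D_0)^u_x$ for the (vacuous) choice of parameters.

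For the inductive step, let $\bar W$ extend $\Sigma(D_0,\ldots,D_n)$. Let $\pi\colon \kappa^{n+1}\to \kappa^n$ denote projection onto the first $n$ coordinates. Then $\pi_*(\bar W)$ is a countably complete extension of $\Sigma(D_0,\ldots,D_{n-1})$, so by the inductive hypothesis there exist $u_0$ on $d'(D_0,\ldots,D_{n-1})$ and $x_0$ on $d(D_0,\ldots,D_{n-1})$ with $\pi_*(\bar W)=\Sigma(D_0,\ldots,D_{n-1})^{u_0}_{x_0}$. Write $(\delta_0',\ldots,\delta_n')=[\mathrm{id}]_{\bar W}$ and let
\[
k\colon V[G]_{\pi_*(\bar W)}\to V[G]_{\bar W},\qquad k\circ j_{\pi_*(\bar W)}=j_{\bar W},
\]
be the factor map sending $[\mathrm{id}]_{\pi_*(\bar W)}$ to $(\delta_0',\ldots,\delta_{n-1}')$. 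Let $\eta$ be least with $k(\eta)>\delta_n'$, and let $\bar D_n$ be the $V[G]_{\pi_*(\bar W)}$-ultrafilter on $\eta$ derived from $k$ at $\delta_n'$. By Corollary~\ref{cor: hulls and ultrapowers}, $k=j_{\bar D_n}$, so $\bar W$ is recovered from $\pi_*(\bar W)$ and $\bar D_n$. The task reduces to showing that $\bar D_n$ coincides with $D_n^{u,x}$ for some extension of $(u_0,x_0)$ by $u(n)\in\{0,1\}$ (when $n\in d'$) and $x(n)\in\omega$ (when $n\in d$).

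By the inductive construction, $j_{\pi_*(\bar W)}\!\restriction V = i^{P_n}\circ e_n\circ j_{W_n}$ with target $N_n$, and $V[G]_{\pi_*(\bar W)}=N_n[G\times G_n]$ for the filter $G_n$ produced by the sequence of indiscernibles of $i^{P_n}$. The heart of the argument is the analysis of the external $N_n$-ultrafilter $\tilde D_n=\bar D_n\cap N_n$, which by fineness of the seed extends $\{i^{P_n}(A)\cap\eta : A\in e_n(D_n)\}$. If $n\notin d$, then $\delta_n'$ lies below the first measurable used in $i^{P_n}$ (above the relevant critical point), and the mildness argument of Theorem~\ref{Thm: Delta-mild generates an ultraiflter} forces $\bar D_n$ to be the unique lift, matching the $m\notin d$ clause of the construction. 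If $n\in d$, apply Lemma~\ref{lemma: tilde U} in the appropriate initial segment of $P_n$ with $U=e_n(D_n)$: either $\tilde D_n$ is principal at some $\delta_n^{\alpha+x(n)}$ (which, since the seed must be a Prikry generator, forces $n\in d'$ and $u(n)=1$), or $\tilde D_n=D_n^{\alpha+x(n)}$ for some $x(n)\le\omega$ (corresponding to $u(n)=0$ or $n\in d\setminus d'$). The case $x(n)=\omega$ is excluded by countable completeness of $\bar D_n$, since $\delta_n^{\alpha+\omega}$ has countable cofinality in $V[G]_{\pi_*(\bar W)}$, exactly as in Theorem~\ref{Thm: Classification simple case}. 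Finally, Lévy--Solovay together with the closure of $N_n[G_n]$ established via Lemma~\ref{Lemma: Generic} shows that $\tilde D_n$ generates the desired $V[G]_{\pi_*(\bar W)}$-ultrafilter, which matches $D_n^{u,x}$ on the nose.

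The main obstacle will be the bookkeeping: matching the three ingredients (restricted ultrapower, image generic, seed) of $\bar W$ with those of the candidate $\Sigma(D_0,\ldots,D_n)^u_x$, so that the identification $\bar W=\Sigma(D_0,\ldots,D_n)^u_x$ follows. This reduces to re-tracing the commuting diagrams of Figures~\ref{fig:enter-label}, \ref{fig: principal case}, and \ref{fig: the case otherwise} with $\bar D_n$ in place of the constructed $D_n^{u,x}$; the key identities are the Mitchell-style computation $j_{D_n^*}^{N_n[G_n]}\!\restriction N_n = j_{D_n^0}\!\restriction N_n$ used in the construction, combined with the closure of $N_n[G_n]$ under sufficiently short sequences to transfer the ultrapower to the full generic extension.
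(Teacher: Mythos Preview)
Your approach matches the paper's: induct on the length, project onto the first $n$ coordinates, apply the induction hypothesis to get $\pi_*(\bar W)=\Sigma(D_0,\dots,D_{n-1})^{u_0}_{x_0}$, factor to isolate $\bar D_n$, and analyze $\tilde D_n=\bar D_n\cap N_n$ via Lemma~\ref{lemma: tilde U}. There is, however, one genuine technical step you have glossed over.

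Lemma~\ref{lemma: tilde U} requires the normal ultrafilter $U$ to sit on the \emph{minimal} element of the domain of the iterating sequence, and requires $\tilde U$ to extend $\{i(A)\cap\eta:A\in U\}$. The complete iteration $i^{P_n}$ begins well below $e_n(\delta_n)$, so you cannot apply the lemma ``in $P_n$ with $U=e_n(D_n)$'' as you write. The paper instead passes to stage $\alpha$ of the complete iteration (the first stage at which $i_\alpha(\delta_n)$ becomes the critical point) and then invokes Lemma~\ref{lemma: normal generation}: since every generator of $i_\alpha:=i^{P_n}_{0,\alpha}\circ e_n$ lies below $i_\alpha(\delta_n)$ and $i_\alpha$ is continuous at $\delta_n$, the pointwise image $i_\alpha[D_n]$ already \emph{generates} the ultrafilter $i_\alpha(D_n)$. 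This is what upgrades the easy containment $i[D_n]\subseteq\tilde D_n$ (coming from the factor-map diagram, not from ``fineness of the seed'') to the needed hypothesis $i^{P_n}_{\alpha,\infty}[i_\alpha(D_n)]\subseteq\tilde D_n$ for Lemma~\ref{lemma: tilde U} applied in $N_{n,\alpha}$ with $U=i_\alpha(D_n)$. The same normal-generation step disposes of the case $n\notin d$ directly: once $i_\alpha[D_n]$ generates $i_\alpha(D_n)$ and $\crit(i^{P_n}_{\alpha,\infty})>i_\alpha(\delta_n)$, one gets $\tilde D_n=i_\alpha(D_n)=i(D_n)$ outright, so no separate mildness argument is needed there. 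Without this bridging via Lemma~\ref{lemma: normal generation}, the application of Lemma~\ref{lemma: tilde U} is not justified.
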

\begin{proof}
We follow a similar argument to the one in 
Theorem \ref{Thm: Classification simple case}. Suppose inductively that we have classified the extensions of $W_m=\Sigma(D_0,...,D_{m-1})$ and let us classify the extensions of $W_{m+1}=\Sigma(D_0,...,D_{m})$. Let $\overline{W}_{m+1}$ be an ultrafilter on $\kappa^{m+1}$ extending of $W_{m+1}$ to a $V[G]$-ultrafilter. Note that \([\text{id}]_{\overline W_{m+1}} = (\overline\delta_0,...\overline \delta_m)\) for some increasing sequence of ordinals. Therefore there is a factor map $k:V_{W_{m+1}}\rightarrow j_{\overline{W}_{m+1}}(V)$ such that $j_{\overline{W}_{m+1}}\restriction V=k\circ j_{W_{m+1}}$ and $k([id]_{W_{m+1}})=[id]_{\overline{W}_{m+1}}$.  
  
Since \(\overline{W}_{m+1}\) extends \(W_{m+1}\),
\(\pi_*(\overline W_{m+1})\) extends \(W_m\) where \(\pi : \kappa^{m+1}\to \kappa^m\) denotes the projection to the first $m$-coordinates. By the induction hypothesis, it follows that \(\pi_*(\overline W_{m+1})\) must be equal to \(\overline{W}_m=\Sigma(D_0,...,D_{m-1})^{u'}_{x'}\) for some $u',x'$.
Therefore there is an elementary embedding \(\ell : V[G]_{\overline{W}_m} \to V[G]_{\overline W_{m+1}}\) such that \(\ell\circ j_{\overline{W}_m} = j_{\overline W_{m+1}}\) and \(\ell([id]_{\overline{W}_m}) =(\overline\delta_{0},...,\overline\delta_{m-1})\). 

Let \(\eta\) be the least ordinal such that \(\ell(\eta) > \overline \delta_m\) and let \(\overline{U}\) denote the \(V[G]_{\overline{W}_m}\)-ultrafilter on \(\eta\) derived from \(\ell\) using \(\overline{\delta}_m\). By Corollary \ref{cor: hulls and ultrapowers}, $\ell=j_{\overline{U}}$ and $[id]_{\overline{U}}=\overline{\delta}_m$. 
Let $e_m, i^{P_m},N_m$ be defined as before (after Theorem \ref{Them: sums of numral under UA}) for $\overline{W}_m=\Sigma(D_0,..,D_{m-1})^{u'}_{x'}$, and denote by $i_\alpha=i_{0,\alpha}^{P_m}\circ e_m:V_{W_n}\rightarrow N_m$ and let $i=i_{0,\infty}=i^{P_m}\circ e_m$. Let \(\tilde{U} = \overline{U}\cap N_m\), then there is a factor map $k_{\tilde{U}}:(N_{m})_{\tilde{U}}\rightarrow j_{\overline{W}_{m+1}}(V)$ such that $j_{\overline{U}}\restriction N_m=k_{\tilde{U}}\circ j_{\tilde{U}}$ and $k_{\tilde{U}}([id]_{\tilde{U}})=\overline{\delta}_m$. Then the following diagram commutes:
\begin{figure}[H]
    \centering
    
\[\begin{tikzcd}
	{V[G]} &&& {V[G]_{\overline{W}_m}} && {V[G]_{\overline{W}_{m+1}}} \\
	&&& {N_m} & {(N_m)_{\overline{U}}} & {j_{\overline{W}_{m+1}}(V)} \\
	V && {V_{W_m}} && {V_{W_{m+1}}}
	\arrow["{j_{\overline{W}_m}}", from=1-1, to=1-4]
	\arrow["{j_{\overline{U}}}", from=1-4, to=1-6]
	\arrow[phantom, sloped,"\subseteq",from=2-4, to=1-4]
	\arrow["{j_{\tilde{U}}}", from=2-4, to=2-5]
	\arrow["{k_{\tilde{U}}}", from=2-5, to=2-6]
	\arrow[phantom, sloped,"\subseteq", from=2-6, to=1-6]
	\arrow[phantom, sloped,"\subseteq", from=3-1, to=1-1]
	\arrow["{j_{W_m}}", from=3-1, to=3-3]
	\arrow["i", from=3-3, to=2-4]
	\arrow["{j_{D_m}}", from=3-3, to=3-5]
	\arrow["{k}", from=3-5, to=2-6]
\end{tikzcd}\]
    \caption{The decomposition of $j_{\overline{W}_{m+1}}$.}
    \label{fig: Induction step characterization diagram}
\end{figure}

We claim that $i[D_m]\subseteq\tilde{U}$. To see this, let $X\in D_m$, then \begin{align*}
    [id]_{D_m}\in j_{D_m}(X)&\Rightarrow  \overline{\delta}_m\in k(j_{D_m}(X))\\ &\Rightarrow k_{\tilde{U}}([id]_{\tilde{U}})\in k_{\tilde{U}}(j_{\overline{U}}(i(X)))\\
    &\Rightarrow [id]_{\tilde{U}}\in j_{\overline{U}}(i(X)) \Rightarrow i(X)\in \tilde{U}
\end{align*}
Next we will prove that 
$\tilde{U}$ is equal to one of the following ultrafilters:
\begin{itemize}
    \item $i(D_m)$. 
    \item $p_{\delta^{\alpha+n}_{m}}$ for some $n<\omega$. 
    \item $D^{\alpha+n}_{m}$ for some $n<\omega$. 
\end{itemize} 
Once we prove the above, it will follow that $\tilde{U}$ is the ultrafilter that was denoted by $\tilde{D}_m$ which by the previous part generates the ultrafilter $D^{u,x}_m$ in $V[G]_{\overline{W}_m}$. Hence $\overline{U}=D^{u,x}_m$ for an appropriate extension of $u',x'$ to $u,x$ determined by the value of $\tilde{U}$. This will end the proof as by definition, $$\overline{W}_{m+1}=\Sigma(\overline{W}_m,\overline{U})=\Sigma(\overline{W}_m,D^{u,x}_m)=\Sigma(D_0,...,D_m)^{u}_x$$

 Let $\alpha$ be the first stage of the iteration such that the critical point of $i^{P_m}_{\alpha,\infty}$ is at least $i_{\alpha}(\delta_m)$. We will show that $i^{P_m}_{\alpha,\infty}[i_{\alpha}(D_m)]\subseteq \tilde{U}$. Note that every generator of $i_\alpha$ is less than $i_{\alpha}(\delta_m)$; this follows from our choice of $\alpha$ and the fact that $e_m$ is an iteration of ultrafilters on cardinals below $\delta_m$. Similarly, $i_\alpha$ is continuous at $\delta_m$ and therefore $i_\alpha[D_m]$ generates $i_\alpha[D_m]\cup F$, where $F$ is the tail filter on $i_\alpha(\delta_m)$. Applying Lemma \ref{lemma: normal generation} we conclude that $i_\alpha[D_m]$ generates $i_\alpha(D_m)$. Since $i[D_m]\subseteq \tilde{U}$ it follows that $i^{P_m}_{\alpha,\infty}[i_\alpha(D_m)]\subseteq \tilde{U}$.

We first consider the case where $m\notin d$, meaning $\delta_m\notin j_{0,m}(\Delta)$. Hence $\crit(i^{P_m}_{\alpha,\infty})>i_{\alpha}(\delta_m)$. It follows that $\tilde{U}=i_\alpha(D_m)=i(D_m)$. 

Now suppose that $m\in d$, namely $\delta_m\in j_{0,m}(\Delta)$. 
        The analysis of \(\tilde U\) is an application of Lemma \ref{lemma: tilde U} in the case \(M = N^m_{\alpha}\), $i=i^{P_m}_{\alpha,\infty}$ and the ultrafilter $i_\alpha(D_m)$ which is a normal ultrafilter on $i_\alpha(\delta_m)$, the minimal ordinal in the remaining part of the complete iteration $i^{P_m}_{\alpha,\infty}$ as computed in $N_{m,\alpha}$.
We conclude that either \(\tilde{U}\) has the desired form or else \(\tilde{U} = i_{\alpha+\omega}(D_m)\). But 
the latter cannot occur, because \(\tilde{U}\) extends to a countably complete \(V[G]_{\overline{W}_m}\)-ultrafilter (namely, \(\overline U\)), whereas \(i_{\alpha+\omega}(D_m)\) does not since $i_{\alpha+\omega}(\delta_m)$ has countable cofinality in $V[G]_{\overline{W}_{m}}$.$\qedhere_{\text{Theorem \ref{Thm: Classification}}}$
\end{proof}
\section{Applications}\label{Section:Applications}
Our first application resolves the problem of whether Weak UA is equivalent to UA (see \cite[Question 9.2.4]{GoldbergUA}).     \begin{lemma}\label{Lemma: Sufficient condition for Weak UA}
            Assume that the Mitchell order is linear on normal ultrafilters, and that for every $\sigma$-complete ultrafilter $U$ there is an internal iteration of normal ultrafilters $(D_0,...,D_{n-1})$ such that $V_U=V_{D_0,...,D_{n-1}}$. Then Weak UA holds.
        \end{lemma}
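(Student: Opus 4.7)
The plan is to reduce Weak UA to a common-iterate statement for internal iterations of normal ultrafilters, and then establish that statement using the linearity of the Mitchell order.

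Fix countably complete ultrafilters $U$ and $W$. By the second hypothesis of the lemma, we may write $V_U = V_{D_0, \ldots, D_{n-1}}$ and $V_W = V_{E_0, \ldots, E_{m-1}}$, where both tuples are internal iterations of normal ultrafilters of $V$. The key claim to establish is the following: there exist internal iterations $(F_0, \ldots, F_{k-1})$ of normal ultrafilters in $V_U$ and $(G_0, \ldots, G_{l-1})$ of normal ultrafilters in $V_W$ such that
\[V_{D_0, \ldots, D_{n-1}, F_0, \ldots, F_{k-1}} \;=\; V_{E_0, \ldots, E_{m-1}, G_0, \ldots, G_{l-1}}.\]
Once the claim is in hand, Weak UA follows at once by taking $W' = \Sigma(F_0, \ldots, F_{k-1}) \in V_U$ and $U' = \Sigma(G_0, \ldots, G_{l-1}) \in V_W$, since then $(V_U)_{W'}$ and $(V_W)_{U'}$ coincide with the common iterated ultrapower.

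I would prove the claim by induction on $n + m$. The base cases, in which one side is empty, are immediate: simply extend the trivial side by mirroring the opposite iteration. For the inductive step with $n, m \geq 1$, compare the first ultrafilters $D_0$ and $E_0$; both are normal ultrafilters of $V$, so by the linearity hypothesis exactly one of $D_0 = E_0$, $D_0 \lhd E_0$, or $E_0 \lhd D_0$ holds. If $D_0 = E_0$, we pass to the tails $(D_1, \ldots, D_{n-1})$ and $(E_1, \ldots, E_{m-1})$ starting from the common model $V_{D_0}$ and invoke the inductive hypothesis. Otherwise, without loss of generality $D_0 \lhd E_0$, so $D_0 \in V_{E_0}$; we extend the $W$-side iteration by $D_0$ (and then by further images of later $D_i$'s as needed) so as to synchronize the two iterations at the smallest cardinal in play. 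Iterating this merging cardinal-by-cardinal — a finite linear comparison of the two normal iterations — yields the desired common iterate in finitely many steps.

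The main obstacle is the bookkeeping required to verify that the merging process actually produces a single common model, rather than two distinct models that merely agree below the critical points. When $D_0$ is absorbed into the $E$-side, the remaining $D_i$'s must be replaced by appropriate $j$-images, and one must check at each stage that the resulting ultrafilter is normal and applicable in the current model; conversely, extra ultrafilters on the $U$-side may be needed to ``catch up'' to the $W$-side at a given cardinal, and their existence relies on the iterates of normals under consideration still being internal. Linearity of the Mitchell order is the critical structural input here: it provides a canonical order in which to perform the comparison, precluding any ambiguity at a common cardinal and guaranteeing termination. The argument parallels the comparison of linear iterations in inner model theory, adapted to the present setting via the $\Sigma$-calculus of sums of internal normal iterations developed earlier in the paper.
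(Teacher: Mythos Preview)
Your strategy is exactly the paper's: the paper simply cites \cite[Prop.~2.3.13]{GoldbergUA} (linearity of the Mitchell order gives UA for normal ultrafilters) and \cite[Prop.~8.3.43]{GoldbergUA} (UA for normals propagates to finite internal iterations of normals), and then observes that the second hypothesis of the lemma downgrades this to Weak UA for arbitrary $\sigma$-complete ultrafilters. You are unpacking the content of those two citations.

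The one place your sketch needs tightening is the inductive step when $D_0\lhd E_0$. As written, ``extend the $W$-side iteration by $D_0$ (and then by further images of later $D_i$'s)\dots iterating this merging cardinal-by-cardinal'' does not visibly decrease $n+m$, and read literally it describes a coiteration whose termination would require a separate argument. The clean reduction is: use UA for normals once to reach the common model $N=(V_{E_0})_{D_0}=(V_{D_0})_{j_{D_0}(E_0)}$ by a single internal normal ultrapower from each of $V_{D_0}$ and $V_{E_0}$; push the tails $(D_1,\dots,D_{n-1})$ and $(E_1,\dots,E_{m-1})$ forward along these comparison maps to internal iterations $I_1''$, $I_2''$ of $N$; apply the induction hypothesis in $N$ (which inherits linearity by elementarity) to $I_1''$ and $I_2''$, whose combined length is $(n-1)+(m-1)$. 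The passage from $V_{D_0,\dots,D_{n-1}}$ to $N_{I_1''}$ is then itself a single internal normal ultrapower (by $j_{D_1,\dots,D_{n-1}}(j_{D_0}(E_0))$, via commuting ultrapowers), and similarly on the other side, so the comparisons compose. Your ``main obstacle'' paragraph shows you anticipated this bookkeeping; it just needs to be wired into the induction so that $n+m$ actually drops.
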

        \begin{proof}
            Granting the linearity of the Mitchell order on normal measures, \cite[Prop. 2.3.13]{GoldbergUA} states that the Ultrapower Axiom for normal ultrafilters holds. It is then not hard to show that the Ultrapower Axiom holds for internal (finite) iterations of normal ultrafilters (see for example the proof \cite[Prop. 8.3.43]{GoldbergUA}).  By the second assumption of the lemma, this suffices to compare (without commutativity) every two ultrapowers of $\sigma$-complete ultrafilters.
        \end{proof}
        \begin{lemma}
            Suppose that the Mitchell order is linear in $V$ and that each normal ultrafilter of $V[G]$ is generated by an ultrafilter of $V$. Then the Mitchell order is linear in $V[G]$.
        \end{lemma}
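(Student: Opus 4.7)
The plan is to reduce Mitchell-order linearity in $V[G]$ to the assumed linearity in $V$, using the canonical-extension correspondence $U\mapsto U^*$ from Definition~\ref{Definition: Canonical extension}. Given normal ultrafilters $\bar U_1,\bar U_2\in V[G]$ on a common measurable cardinal $\lambda$, first note that $\lambda\notin\Delta$, since any element of $\Delta$ is singularized by $\mathbb P_\kappa$ and so cannot remain measurable in $V[G]$. Setting $U_i:=\bar U_i\cap V$, each $U_i$ is a $\lambda$-complete ultrafilter of $V$; normality of $\bar U_i$ together with the fact that diagonal intersections of $V$-sequences belong to $V$ shows that $U_i$ is normal in $V$. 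Thus each $U_i$ is $\Delta$-mild, and by Corollary~\ref{Cor: Kaplan's Theorem} the canonical extension $U_i^*$ is the unique ultrafilter of $V[G]$ extending $U_i$. Since, by hypothesis, $\bar U_i$ is generated by a $V$-ultrafilter, necessarily $\bar U_i=U_i^*$.

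Next, apply linearity of $\triangleleft$ in $V$ to $U_1,U_2$. If $U_1=U_2$ then $\bar U_1=\bar U_2$, so assume without loss of generality that $U_1\triangleleft U_2$ in $V$, i.e.\ $U_1\in V_{U_2}$. The goal becomes $\bar U_1\in V[G]_{\bar U_2}$. By Corollary~\ref{Cor: Kaplan's Theorem}, $V[G]_{\bar U_2}=N[G\times G_{\vec s}]$, where $i:V_{U_2}\to N$ is the complete iteration of $V_{U_2}$ by $j_{U_2}(\vec U)\restriction(\lambda,j_{U_2}(\lambda))$ and $\vec s$ is the associated sequence of sets of indiscernibles. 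The critical point of $i$ is the least measurable of $V_{U_2}$ above $\lambda$, in particular exceeding $\lambda^+$, whereas $U_1\in V_{\lambda+2}^{V_{U_2}}$, so $i(U_1)=U_1$ and therefore $U_1\in N\subseteq V[G]_{\bar U_2}$.

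To finish, I will observe that $\bar U_1$ itself, not just its generator, lies in $V[G]_{\bar U_2}$. The ultrapower $V[G]_{\bar U_2}$ is closed under $\lambda$-sequences from $V[G]$ (as $\bar U_2$ is $\lambda$-complete), so $P(\lambda)^{V[G]}=P(\lambda)^{V[G]_{\bar U_2}}$. Inside $V[G]_{\bar U_2}$ the filter on $P(\lambda)$ generated by the element $U_1$ is thus definable, and by the uniqueness clause of Theorem~\ref{Thm: Delta-mild generates an ultraiflter} it coincides with $\bar U_1=U_1^*$. Hence $\bar U_1\in V[G]_{\bar U_2}$, giving $\bar U_1\triangleleft\bar U_2$ in $V[G]$.

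The proof is essentially one of bookkeeping, and the only genuinely non-cosmetic step is the verification that the complete iteration $i:V_{U_2}\to N$ fixes $U_1$ pointwise. This is immediate from the rank of $U_1$ relative to $\mathrm{crit}(i)$, but it is the place where the whole argument turns: it is precisely this rigidity that forces the Mitchell-order comparison in $V$ to propagate to $V[G]$.
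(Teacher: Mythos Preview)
Your argument is correct, but it takes a heavier route than the paper's. You invoke the full complete-iteration machinery (Corollary~\ref{Cor: Kaplan's Theorem}) to locate $U_1$ inside $N\subseteq V[G]_{\bar U_2}$, whereas the paper's proof is entirely elementary and uses nothing about the Magidor iteration. The paper simply considers the abstract factor map $k:V_{W_0}\to j_W(V)$ arising from $W_0\subseteq W$; since both $W_0$ and $W$ are normal on the same cardinal, $\crit(k)$ exceeds that cardinal, so $U_0\subseteq k(U_0)\in j_W(V)\subseteq M_W$, and as $M_W$ and $V[G]$ share the same power set at that cardinal, $k(U_0)$ generates $U$ inside $M_W$. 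This argument works for \emph{any} forcing extension satisfying the two hypotheses of the lemma, not just $\mathbb P_\kappa$.

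Two minor points about your version: first, you re-derive from Theorem~\ref{Thm: Delta-mild generates an ultraiflter} that $\bar U_i$ is generated by $U_i$, but this is already given to you by hypothesis (and your derivation tacitly assumes the complete-iteration description of $j_{\bar U_2}$ from Corollary~\ref{Cor: Kaplan's Theorem}, which as stated is for $\kappa$-complete ultrafilters, so for general measurable $\lambda\leq\kappa$ a small adaptation is needed). Second, the ``uniqueness clause'' you cite is not stated in Theorem~\ref{Thm: Delta-mild generates an ultraiflter}; uniqueness is simply the trivial fact that a filter generating an ultrafilter has only one ultrafilter extension. None of this is fatal, but the paper's factor-map argument avoids all of it.
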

        \begin{proof}
            Let $U,W\in V[G]$ be distinct normal measures on a cardinal $\kappa$. Let $U_0=U\cap V$ and $W_0=W\cap V$. Then $U_0,W_0\in V$ generate $U,W$ respectively and therefore they are distinct normal measures in $V$. Suppose without loss of generality that $U_0\triangleleft W_0$, and let $k:M^V_{W_0}\rightarrow j_W(V)$ be the factor map. The critical point of $k$ is greater than $\kappa$ (since both are $W_0$ and $W$ are normal). It follows that $U_0\subseteq k(U_0)\in j_W(V)\subseteq M_W$. Since $V[G]$ and $M_W$ have the same subsets of $\kappa$,  $k(U_0)$ also must generate $U\in M_W$. 
        \end{proof}
    \begin{corollary}[UA]
         Assume GCH  and that $\kappa$ is the least measurable limit of measurables. Let $\vec{U}$ be the sequence of normal measures on all the measurables below $\kappa$. Let $G$ be $V$-generic for $\mathbb{P}_\kappa$. Then $V[G]\models \text{weak UA}+\neg\text{UA}$.
    \end{corollary}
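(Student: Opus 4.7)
The plan is to establish the two halves of the conclusion --- failure of UA and weak UA --- separately in $V[G]$. The failure of UA is essentially immediate from work already done: since $\kappa$ is a measurable limit of measurables, Theorem \ref{theorem: non-rigid ultrapower} applies, producing for any normal ultrafilter $F$ on $\kappa$ in $V[G]$ a nontrivial elementary embedding from $V[G]_F$ into itself. As noted in the introduction (citing \cite[Theorem 3.3]{GoldbergUniqueness}), UA precludes such embeddings, so $V[G]\not\models\text{UA}$.

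To obtain weak UA I would verify both hypotheses of Lemma \ref{Lemma: Sufficient condition for Weak UA} in $V[G]$. For linearity of the Mitchell order on normal ultrafilters I use the lemma stated immediately before the present corollary: UA in $V$ gives linearity of the Mitchell order in $V$, and Kaplan's theorem (Corollary \ref{Cor: Kaplan's Theorem}) shows that every normal ultrafilter on $\kappa$ in $V[G]$ is generated by its restriction to $V$. For cardinals above $\kappa$ the standard L\'evy--Solovay argument applies since $|\mathbb{P}_\kappa|=\kappa$, and for cardinals below $\kappa$ there are no normal ultrafilters at all in $V[G]$: every measurable below $\kappa$ belongs to $\Delta$ and has been singularized. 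The preceding lemma then transports linearity of the Mitchell order from $V$ to $V[G]$.

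For the second hypothesis --- representing every $\sigma$-complete $\bar W\in V[G]$ via an internal iteration of normal ultrafilters --- I would start from $\bar W_0 = \bar W\cap V$, a countably complete $V$-ultrafilter. Applying UA in $V$ through Theorem \ref{Them: sums of numral under UA} gives an internal iteration $(D_0,\ldots,D_{n-1})$ of normal ultrafilters with $V_{\bar W_0} = V_{D_0,\ldots,D_{n-1}}$. Pushing $\bar W$ forward along a function in $V$ witnessing the Rudin--Keisler equivalence $\bar W_0\equiv_{\mathrm{RK}}\Sigma(D_0,\ldots,D_{n-1})$, the classification Theorem \ref{Thm: Classification} identifies the image with $\Sigma(D_0,\ldots,D_{n-1})^u_x$ for some $u,x$. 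Assuming $d=d'$ for the representing iteration, Lemma \ref{Lemma: generic sums} then supplies an internal iteration $(F_0,\ldots,F_{\ell-1})$ of normal ultrafilters in $V[G]$ with $V[G]_{\bar W}=V[G]_{F_0,\ldots,F_{\ell-1}}$, as required.

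The main obstacle is ensuring that $(D_0,\ldots,D_{n-1})$ can be chosen with $d=d'$, i.e., that whenever $\delta_m\in j_{0m}(\Delta)$ we have $D_m = j_{0m}(\vec U)_{\delta_m}$. I expect this to follow from linearity of the Mitchell order together with the fact that $\vec U_\delta$ is the designated Mitchell-order-$0$ measure on each $\delta\in\Delta$: under UA, the canonical representation of $\bar W_0$ produced by Theorem \ref{Them: sums of numral under UA} can be arranged so that at each measurable cardinal in the discrete set $\Delta$ of measurables below $\kappa$, the measure used is the one from $\vec U$. One should also check the implicit hypothesis of Theorem \ref{Them: sums of numral under UA} that no measurable $\delta$ has $o(\delta)=2^{2^\delta}$; this ought to be guaranteed by the minimality of $\kappa$ as a measurable limit of measurables, which tightly constrains the Mitchell ranks of the relevant cardinals below $\kappa$.
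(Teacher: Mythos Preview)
Your proposal is correct and follows the paper's approach essentially step for step: failure of UA via Theorem~\ref{theorem: non-rigid ultrapower}, and weak UA via Lemma~\ref{Lemma: Sufficient condition for Weak UA} after invoking Theorems~\ref{Them: sums of numral under UA} and~\ref{Thm: Classification} and Lemma~\ref{Lemma: generic sums}. On your flagged concern about $d=d'$: the resolution is cleaner than arranging a canonical representation---since $\kappa$ is the \emph{least} measurable limit of measurables, every measurable $\alpha<\kappa$ carries a unique normal measure (a second one would make $\alpha$ a measurable limit of measurables), and by elementarity the same holds for each $\delta_m\in j_{0m}(\Delta)$ in $M_m$, forcing $D_m=j_{0m}(\vec U)_{\delta_m}$ automatically; the paper glosses over this but it is exactly the point you need.
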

    \begin{proof}
       By \cite[2.3.1]{GoldbergUA}, in $V$ the Mitchell order is linear and since every normal ultrafilter in $V[G]$ is generated by a ground model normal measure, the previous lemma can be used to infer that the Mitchell order is linear in $V[G]$. Therefore by  Lemma \ref{Lemma: Sufficient condition for Weak UA}, it remains to prove that every ultrapower by a $\sigma$-complete ultrafilter in $V[G]$ is equal to an ultrapower by a sum of normals. Let $U\in V[G]$ be a $\sigma$-complete ultrafilter. Since $\kappa$ is the least measurable in $V[G]$, $U$ is $\kappa$-complete there. By \cite{GITIK_KAPLAN_2023}, $U_0=U\cap V$ is a $\kappa$-complete ultrafilter on $\kappa$ in $V$, and by Theorem \ref{Them: sums of numral under UA}, 
       Lemma \ref{Lemma: generic sums}, and Theorem \ref{Thm: Classification}, we obtain $V[G]_U=V[G]_W$ where $W$ is the sum of a finite iteration of normal ultrafilters.

       To see that UA fails in $V[G]$, note that 
       in $V[G]$, by Theorem \ref{theorem: non-rigid ultrapower}, there is an ultrapower $M$ of the universe that admits a nontrivial elementary embedding $k: M \to M$. This would be impossible if UA held in $V[G]$.
       The reason is that assuming UA, by \cite[Thm. 5.2]{GoldbergUniqueness} there is at most one elementary embedding $j : V[G]\to M$.
       Therefore $k\circ j = j$,. and so by a standard lemma on the Rudin--Keisler order (proved for example in \cite[Cor. 4.29]{GoldbergUniqueness}), $k$ would be the identity.
    \end{proof}
We note that the GCH assumption can be replaced by an argument using UA to show that no new $V$-measures appear in the extension. 

Our final application is to a natural question. Can two countably complete uniform ultrafilters on distinct cardinals have the same ultrapower? That is, given such ultrafilters $U_0,U_1$, can $V_{U_0}$ be equal to $V_{U_1}$? One obstruction is the following:
\begin{proposition}
    Suppose that \(U_0\) and \(U_1\) are countably complete uniform ultrafilters on regular cardinals \(\kappa_0\) and \(\kappa_1\), and assume \(V_{U_0} = V_{U_1}\). Then \(\kappa_0 = \kappa_1\).
    \begin{proof}
        Since \(j_{U_0}\) and \(j_{U_1}\) are elementary embeddings from \(V\) into the same inner model, we can appeal to a theorem of Woodin \cite[Theorem 3.4]{GoldbergUniqueness} to obtain that \(j_{U_0}\restriction \text{Ord} = j_{U_1}\restriction \text{Ord}\). Assume without loss of generality that \(\kappa_0 \leq \kappa_1\). Then
        since \(U_1\) is uniform on \(\kappa_1\), \(j_{U_1}\) is discontinuous at \(\kappa_1\). It follows that \(j_{U_0}\) is discontinuous at \(\kappa_1\). Since \(\kappa_1\) is regular and \(j_{U_0}\) is discontinuous at \(\kappa_1\),
        we cannot have \(\kappa_0 < \kappa_1\) by
        \cite[Lemma 2.2.34]{GoldbergUA}. Therefore \(\kappa_0 = \kappa_1\).
    \end{proof}
\end{proposition}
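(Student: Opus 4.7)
The plan is to leverage Woodin's rigidity theorem for ultrapower embeddings with a common target, and then to exploit the regularity assumption together with a standard fact about discontinuity points of ultrapower embeddings. Since $V_{U_0}=V_{U_1}$, the two embeddings $j_{U_0}, j_{U_1}\colon V\to V_{U_0}=V_{U_1}$ share both source and target, so Woodin's theorem (cited as \cite[Theorem 3.4]{GoldbergUniqueness}) applies and yields $j_{U_0}\restriction\mathrm{Ord}=j_{U_1}\restriction\mathrm{Ord}$. This is the key move: everything reduces to a statement about the ordinal parts of these embeddings, which agree.

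Next I would normalize by assuming, without loss of generality, that $\kappa_0\leq \kappa_1$. The aim is to derive $\kappa_0=\kappa_1$ from this, which by symmetry-free reasoning is enough. Because $U_1$ is a uniform ultrafilter on the regular cardinal $\kappa_1$, $[\mathrm{id}]_{U_1}$ lies strictly above $\sup j_{U_1}[\kappa_1]$, so $j_{U_1}$ is discontinuous at $\kappa_1$. By the agreement on ordinals established in the previous step, $j_{U_0}$ is also discontinuous at $\kappa_1$.

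Finally I would invoke \cite[Lemma 2.2.34]{GoldbergUA}, which says (in essence) that a countably complete ultrafilter on a cardinal $\kappa_0$ cannot produce an ultrapower embedding that is discontinuous at a regular cardinal $\kappa_1>\kappa_0$. Applied to $U_0$ and $\kappa_1$, this rules out the strict inequality $\kappa_0<\kappa_1$, so $\kappa_0=\kappa_1$ as desired.

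The only potentially subtle point is that the two cited external results do all the real work; the main obstacle, if any, is just to make sure the hypotheses line up, in particular that regularity of $\kappa_1$ is what lets us upgrade discontinuity of $j_{U_0}$ at $\kappa_1$ into a lower bound on the underlying cardinal of $U_0$. Once the application of \cite[Lemma 2.2.34]{GoldbergUA} is justified, the argument closes immediately without the need for any further calculation.
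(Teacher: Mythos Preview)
Your proposal is correct and follows essentially the same argument as the paper's proof: both invoke Woodin's theorem to get agreement on ordinals, use uniformity to obtain discontinuity of $j_{U_1}$ at $\kappa_1$, transfer this to $j_{U_0}$, and then apply \cite[Lemma 2.2.34]{GoldbergUA} with the regularity of $\kappa_1$ to rule out $\kappa_0<\kappa_1$. The only minor quibble is that uniformity gives $[\mathrm{id}]_{U_1}\geq \sup j_{U_1}[\kappa_1]$ rather than strict inequality, but this is irrelevant since $[\mathrm{id}]_{U_1}<j_{U_1}(\kappa_1)$ already yields the needed discontinuity.
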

The following example shows that the assumption above that \(\kappa_0\) and \(\kappa_1\) are regular cardinals is necessary.
It also demonstrates one of the complications arising in the attempt to extend our results on extensions of \(\kappa\)-complete ultrafilters on \(\kappa\) to arbitrary countably complete ultrafilters.

For the remainder of the paper, let $\lambda$ be a measurable cardinal
and let $\kappa>\lambda$ denote the least limit of measurable cardinals of cofinality $\lambda$.  Let \(\Delta\) denote the set of measurable cardinals strictly between \(\lambda\) and \(\kappa\), and let \(\vec U : \Delta\to V\) assign to each such measurable cardinal a normal ultrafilter. Finally, let \(\mathbb P_\kappa\) be the descrete Magidor product of Prikry forcings associated with \(\vec U\) and let \(G\subseteq \mathbb P_\kappa\) be \(V\)-generic.

\begin{lemma}\label{lemma: forcing above lambda}
    If \(D\) is a normal ultrafilter on \(\lambda\),
    then \(j_D^{V[G]}\restriction V = i\circ j_D\)
    where \(i : V_D\to N\) is the complete iteration of \(V_D\) by \(j_D^V(\vec U)\restriction (\kappa,j_D^V(\kappa))\). Moreover \(j_D^{V[G]}(G) = (G\cap V_D)\times G_{\vec s}\)  where \(\vec s\) is the sequence of sets of indiscernibles associated with the complete iteration of \(j_D^V(\vec U)\restriction (\kappa,j_D^V(\kappa))\).
    \begin{proof}
        Note that \(G\cap V_D\) is \(V_D\)-generic
        on \(\mathbb P_\kappa(j_D(\vec U)\restriction \kappa)\), and therefore by the Mathias criterion and Lemma \ref{Lemma: Generic}, \[H = (G\cap V_D)\times G_{\vec s}\] is \(N\)-generic on \(i\circ j_D(\mathbb P_\kappa)\). Moreover, if \(p\in G\), then \(j_D^V(p)\restriction \kappa\in G\cap V_D\) and \(j_D^V(p)\restriction (\kappa,j_D(\kappa))\) is a pure condition, and so \(i(j_D^V(p))\in (G\cap V_D)\times G_{\vec s}\). It follows that
        \(i\circ j_D : V \to N\) lifts to an elementary embedding \(j : V[G]\to N[H]\) with \(j(G) = H\).

        To show that \(j = j_D^{V[G]}\) it suffices by 
        Corollary \ref{cor: hulls and ultrapowers} to show that \(N[H] = \hull^{N[H]}(j[V[G]]\cup \{\lambda\})\). Since \(i[V_D] = \hull^N(j[V]\cup \{\lambda\})\), we have
        \[i[V_D]\cup \{G_{\vec s}\}\subseteq \hull^{N[H]}(j[V[G]]\cup \{\lambda\})\] and \(N[G_{\vec s}]\subseteq \hull^{N[H]}(j[V[G]]\cup \{\lambda\})\) by Lemma \ref{Lemma: Generic} applied in $V_D$.
        Since \(N[H] = N[G_{\vec s}][G\cap V_D]\)
        and \(N[G_{\vec s}]\cup \{G\cap V_D\}\subseteq \hull^{N[H]}(j[V[G]]\cup \{\lambda\})\), it follows that 
        \(N[H] = \hull^{N[H]}(j[V[G]]\cup \{\lambda\})\), as desired.
    \end{proof}
\end{lemma}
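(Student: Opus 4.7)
The plan is to lift $i\circ j_D$ to an elementary embedding $j:V[G]\to N[H]$ with $j(G)=H=(G\cap V_D)\times G_{\vec s}$, and then identify $j$ with $j_D^{V[G]}$ via Corollary~\ref{cor: hulls and ultrapowers} applied with seed $\lambda$. The natural factorization $i\circ j_D(\mathbb{P}_\kappa)\cong \mathbb{P}_\kappa^{V_D}\times i\bigl(\mathbb{P}_{(\kappa,j_D(\kappa))}^{V_D}\bigr)$ of the target forcing dictates the form of $H$; note that $\crit(i)>\kappa$, so $i$ fixes the first factor pointwise.

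To verify that $H$ is $N$-generic for $i\circ j_D(\mathbb{P}_\kappa)$, observe that $G_{\vec s}$ is $N$-generic for $i\bigl(\mathbb{P}_{(\kappa,j_D(\kappa))}^{V_D}\bigr)$ by Lemma~\ref{Lemma: Generic}(1). For the first factor, I would use the Mathias criterion: the projection $G\cap V_D$ contains every pure condition $\leq^*$ a condition in it (by the Prikry property applied inside $V_D$), and the Prikry sequences read off from $G$ at each $\beta\in j_D(\Delta)\cap\kappa$ serve correctly as $V_D$-Prikry sequences with respect to the measures $j_D(\vec U)\restriction\kappa$. Combined with the product lemma, $H$ is $N$-generic on $i\circ j_D(\mathbb{P}_\kappa)$.

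Next I would show $i\circ j_D[G]\subseteq H$. For $p\in G$, the support of $p$ is a bounded finite subset of $\Delta$, so $j_D(p)$ splits as a condition on $j_D(\Delta)\cap\kappa$ together with a tail on $j_D(\Delta)\cap(\kappa,j_D(\kappa))$. The first component lies in $\mathbb{P}_\kappa^{V_D}$ and agrees coordinatewise with $p$ where relevant, so it belongs to $G\cap V_D$; the tail is a pure condition, so its $i$-image lies in $G_{\vec s}$ by Lemma~\ref{Lemma: Generic}(2). Hence $i(j_D(p))\in H$, and the Silver lifting criterion produces a unique elementary $j:V[G]\to N[H]$ lifting $i\circ j_D$ with $j(G)=H$.

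Finally, I would verify the hull condition $N[H]=\hull^{N[H]}(j[V[G]]\cup\{\lambda\})$ needed to apply Corollary~\ref{cor: hulls and ultrapowers}. Since $D$ is normal, $V_D=\hull^{V_D}(j_D[V]\cup\{\lambda\})$, and applying $i$ gives $i[V_D]\subseteq\hull^{N[H]}(j[V]\cup\{\lambda\})$. Lemma~\ref{Lemma: Generic}(3) applied inside $V_D$ to the complete iteration yields that every element of $N$ is $\Sigma_2$-definable over $N[G_{\vec s}]$ from parameters in $i[V_D]\cup\{G_{\vec s}\}$. Both $G_{\vec s}$ and $G\cap V_D$ are recoverable from $H=j(G)$, so both lie in the hull; combining, $N[H]=N[G_{\vec s}][G\cap V_D]$ lies in the hull. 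The main obstacle I anticipate is the genericity step, where one must reconcile $\Delta^V$ and $j_D(\Delta)\cap\kappa$ as index sets and confirm compatibility of the measures $j_D(\vec U)\restriction\kappa$ with the Prikry sequences coming from $G$ --- the fact that $\cf^V(\kappa)=\lambda$ makes $j_D$ discontinuous at $\kappa$, so the splitting of $j_D(\mathbb{P}_\kappa)$ around $\kappa$ requires care.
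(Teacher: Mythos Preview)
Your proposal is correct and follows essentially the same route as the paper: lift \(i\circ j_D\) via Silver's criterion using \(H=(G\cap V_D)\times G_{\vec s}\), then identify the lift with \(j_D^{V[G]}\) by verifying the hull condition \(N[H]=\hull^{N[H]}(j[V[G]]\cup\{\lambda\})\) and invoking Corollary~\ref{cor: hulls and ultrapowers} with seed \(\lambda\). The obstacle you single out---reconciling \(\Delta\) with \(j_D(\Delta)\cap\kappa\) and checking that the \(G\)-Prikry sequences satisfy the Mathias criterion for the measures \(j_D(\vec U)\restriction\kappa\)---is exactly what the paper compresses into its one-line appeal to ``the Mathias criterion,'' so you have correctly located where the work lies.
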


    \begin{proposition}
        In \(V[G]\), there are countably complete uniform ultrafilters on \(\lambda\) and \(\kappa\) with the same ultrapower.
    \end{proposition}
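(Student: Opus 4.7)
The plan is to produce both ultrafilters from a single normal measure on $\lambda$, exploiting the fact that $\kappa$ has cofinality $\lambda$ in $V[G]$. Start with a normal ultrafilter $D$ on $\lambda$ in $V$. Since $\Delta\subseteq(\lambda,\kappa)$, we have $\Delta\cap\lambda=\emptyset$, so $D$ is trivially $(\Delta\cap\lambda)$-mild, and by Theorem \ref{Thm: Delta-mild generates an ultraiflter} $D$ generates a normal ultrafilter $D^*$ on $\lambda$ in $V[G]$. This is already a countably complete uniform ultrafilter on $\lambda$; moreover Lemma \ref{lemma: forcing above lambda} describes $j_{D^*}^{V[G]}\restriction V$ explicitly. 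The remaining task is to construct a countably complete uniform ultrafilter $U$ on $\kappa$ in $V[G]$ with $V[G]_U = V[G]_{D^*}$.

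Because $\cf^V(\kappa)=\lambda$ and this is preserved into $V[G]$ (as $\lambda$ remains measurable there and $\kappa$ remains a cardinal), fix in $V$ a strictly increasing continuous cofinal map $f:\lambda\to\kappa$. Set $\xi=[f]_{D^*}=j_{D^*}^{V[G]}(f)(\lambda)$. By continuity of $f$ together with $\crit(j_{D^*}^{V[G]})=\lambda$, we have
\[
\xi=\sup_{\alpha<\lambda}j_{D^*}^{V[G]}(f(\alpha))=\sup j_{D^*}^{V[G]}[\kappa].
\]
Let $U$ be the $V[G]$-ultrafilter on $\kappa$ derived from $j_{D^*}^{V[G]}$ using the seed $\xi$. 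Since $\xi\geq j_{D^*}^{V[G]}(\gamma)$ for every $\gamma<\kappa$, any $X\in U$ must be unbounded in $\kappa$, so $U$ is uniform. Countable completeness of $U$ is automatic because the target ultrapower coincides with $V[G]_{D^*}$, which is well-founded.

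To finish, we must show $V[G]_U = V[G]_{D^*}$. By Corollary \ref{cor: hulls and ultrapowers}, this reduces to
\[
V[G]_{D^*} = \hull^{V[G]_{D^*}}\!\bigl(j_{D^*}^{V[G]}[V[G]]\cup\{\xi\}\bigr).
\]
Since $V[G]_{D^*}$ is already generated over $j_{D^*}^{V[G]}[V[G]]$ by $\lambda=[\text{id}]_{D^*}$, it suffices to recover $\lambda$ from $\xi$ inside $V[G]_{D^*}$. The key observation is that $j_{D^*}^{V[G]}(f)\restriction\lambda$ belongs to $V[G]_{D^*}$ and is a strictly increasing continuous sequence of length $\lambda$ cofinal in $\xi$, while $\lambda$ remains regular in $V[G]_{D^*}$ (because $\crit(j_{D^*}^{V[G]})=\lambda$ forces $V[G]$ and $V[G]_{D^*}$ to share all $<\lambda$-sequences of ordinals). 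Hence $\cf^{V[G]_{D^*}}(\xi)=\lambda$, so $\lambda$ is recoverable from $\xi$ as its cofinality.

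The main subtlety is the cofinality calculation in the ultrapower: one needs both that the sequence $j_{D^*}^{V[G]}(f)\restriction\lambda$ really witnesses $\cf(\xi)\le\lambda$ (immediate by continuity) and that no shorter cofinal sequence appears in $V[G]_{D^*}$, which follows from the agreement between $V[G]$ and $V[G]_{D^*}$ below $\lambda$. Granting this, $\xi$ determines $\lambda$, so $U$ and $D^*$ have the same ultrapower, giving two uniform countably complete ultrafilters on the distinct cardinals $\kappa$ and $\lambda$ with a common ultrapower.
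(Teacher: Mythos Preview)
Your argument has a genuine gap: the ultrafilter \(U\) you construct is \emph{not} uniform on \(\kappa\). You argue that since \(\xi\geq j_{D^*}(\gamma)\) for every \(\gamma<\kappa\), any \(X\in U\) must be unbounded in \(\kappa\), and then conclude that \(U\) is uniform. But \(\kappa\) is singular (of cofinality \(\lambda\)), so ``unbounded'' does not imply ``of cardinality \(\kappa\).'' Concretely, the range \(f[\lambda]\) belongs to \(U\): we have \(j_{D^*}(f[\lambda])=\rng(j_{D^*}(f))\), and \(\xi=j_{D^*}(f)(\lambda)\in\rng(j_{D^*}(f))\). Since \(|f[\lambda]|=\lambda<\kappa\), the ultrafilter \(U\) is not uniform on \(\kappa\). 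In fact your \(U\) is just \(f_*(D^*)\), the Rudin--Keisler copy of \(D^*\) pushed along \(f\); it concentrates on a set of size \(\lambda\), so it is really a uniform ultrafilter on \(\lambda\), not on \(\kappa\).

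The deeper issue is that any ultrafilter derived from \(j_{D^*}\) using an ordinal seed will be Rudin--Keisler below \(D^*\), hence (since \(D^*\) is normal) Rudin--Keisler equivalent to \(D^*\), hence not uniform on any cardinal larger than \(\lambda\). To produce a uniform ultrafilter on \(\kappa\) with ultrapower \(V[G]_{D^*}\), you need a \emph{different} elementary embedding \(V[G]\to V[G]_{D^*}\), not merely a different seed for the same \(j_{D^*}\). The paper obtains this by working in \(V\) with \(W=\Sigma(D,U)\), where \(U=j_D(\vec U)_\gamma\) is the first measure above \(\kappa\) in \(V_D\), and then taking the canonical lift \(W^*\). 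Since \(j_U\) is precisely the first step of the complete iteration \(i\) of \(V_D\), one gets \(V[G]_{W^*}=V[G]_{D^*}\); but \(j_{W^*}(G)\neq j_{D^*}(G)\) (they differ by a single indiscernible), so \(j_{W^*}\neq j_{D^*}\), and this is what forces \(W^*\) to be genuinely uniform on \(\kappa^2\).

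A minor side remark: your invocation of Theorem~\ref{Thm: Delta-mild generates an ultraiflter} is not quite well-posed since \(\Delta\cap\lambda=\emptyset\) is not unbounded in \(\lambda\), but the conclusion you want is actually easier here: the direct extension ordering of \(\mathbb P_\kappa\) is \(\min(\Delta)\)-closed with \(\min(\Delta)>\lambda\), so the forcing adds no subsets of \(\lambda\) and \(D\) is literally an ultrafilter in \(V[G]\).
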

    \begin{proof}
        Let \(D\) be a normal ultrafilter on
        \(\lambda\). Let \(f : \lambda\to V_\kappa\) be the increasing enumeration of \(\vec U\), and let \(U = [f]_D\). Then in \(V_D\), \(U\) is a normal ultrafilter on the least measurable \(\gamma > \kappa\). 
        Finally, let \(W = \Sigma(D,U)\). Then \(W\) is a uniform \(\lambda\)-complete ultrafilter on \(\kappa^2\).
        
        We claim that there is an extension \(W^*\) of \(W\) to a uniform \(\lambda\)-complete \(V[G]\)-ultrafilter on \(\kappa^2\) such that
        \(V[G]_{W^*} = V[G]_D\). 

        We will define \(W^*\) as the analog of the canonical lift of \(W\) (defined in Section \ref{Section: Canonical extension}) to this situation. The restricted ultrapower of \(W^*\) will be the elementary embedding \(i\circ j_W^V \) where \(i : V_{W}\to N\) is the complete iteration of \(V_W\) by \(j_W(\vec U)\restriction (\kappa,j_W(\kappa))\). The image generic will be
        \((G\cap V_W) \times G_{\vec t}\) where \(\vec t\) is the sequence of sets of indiscernibles associated with the complete iteration of \(j_W(\vec U)\). The seed will just be
        \(i([id]_W)\). These three ingredients uniquely determine \(W^*\), and it is not hard to verify that 
        a \(V[G]\)-ultrafilter \(W^*\supseteq W\) with these invariants exists.

        Note that the complete iteration of \(V_W\) by \(j_W(\vec U)\restriction (\kappa,j_W(\kappa))\) is just the tail of the complete iteration of \(V_D\) by \(j_D(\vec U)\restriction (\kappa,j_D(\kappa))\) after applying the first measure. 
        Therefore by Lemma \ref{lemma: forcing above lambda},
        \(V[G]_{W^*} = N[(G\cap N)\times G_{\vec t}] = V[G]_D\), noting that the sequence \(\vec t\) differs from the sequence \(\vec s\) of sets of indiscernibles associated with \(j_D(\vec U)\restriction (\kappa,j_D(\kappa))\) by just one ordinal.

        It remains to show that \(W^*\) is a uniform ultrafilter on \(\kappa^2\). The reason is that \(j_{W^*}\restriction V[G]_\kappa = j_D^{V[G]}\restriction V[G]_\kappa\), while \(j_{W^*} \neq j_D^{V[G]}\), the latter following from the fact that \[j^{V[G]}_D(G) = G\times G_{\vec s}\neq G\times G_{\vec t}= j_{W^*}(G)\] If \(W^*\) were not uniform on \(\kappa^2\), then \(W^*\) would be Rudin--Keisler equivalent to an ultrafilter \(Z\) on some \(\gamma < \kappa\) derived from \(j_{W^*}\) and some \(\xi < j_{W^*}(\gamma)\). But then \(Z\) is also derived from \(j_D\) and \(\xi\), so \(W^* \leq_{RK} Z \leq_{RK} D\). Since \(D\) is normal and \(W^*\) is nonprincipal, it follows that \(D\) and \(W^*\) are Rudin--Keisler equivalent, contrary to the fact that \(j_{W^*} \neq j_D^{V[G]}\).
    \end{proof}
\section{Problems}\label{Section: Problems}
We list out a few related problems we did not address:
\begin{question}
    Can we characterize all the $\sigma$-complete extensions of a $\sigma$-complete ultrafilter on $V$ after the discrete Magidor product? In particular, are there only countably many extensions? 
\end{question}
\begin{question}
    Can we characterize the $\sigma$-complete extensions of sums of normals after other types of iterations of Prikry forcing?
\end{question}
\begin{question}
    Working over any ground model $V$, can we find a characterization of all the extensions of a countably complete ultrafilter to a countably complete ultrafilter after the discrete Magidor product?
\end{question}
We conjecture that if $\kappa$ is strongly compact then after a discrete Magidor product below $\kappa$, there is a $\kappa$-complete $V$-ultrafilter over $\kappa$ which has uncountably many lifts.
\begin{question}
Is there a forcing that preserves UA and adds a subset $X$ to the least supercompact cardinal $\kappa$ such that $X\notin V[Y]$ for any $Y\subseteq V$ of cardinality less than $\kappa$?
\end{question}
\subsection{Acknowledgments}
The authors would like to thank Eyal Kaplan for many discussions on the subject and Omer Ben-Neria for comments on our first draft. Finally, we would like to express our gratitude to the referee of this paper for many valuable comments and corrections.
\bibliographystyle{amsplain}
\bibliography{ref}
\end{document}